\newcommand{\pie}[1]{%
\begin{tikzpicture}
\tiny\draw circle  (0.9ex);\fill[rotate=270, gray!90!white] (0.9ex,0) arc (0:#1:0.9ex) -- (0,0) -- cycle;
\end{tikzpicture}%
}
\tikzset{inner sep=0pt, node distance=5mm,
  root/.style={circle,draw,minimum size=5pt},
  broot/.style={circle,draw,minimum size=5pt, fill},
  xroot/.style={circle,draw,minimum size=5pt,fill=gray!70!white},
		crossroot/.style={circle,draw,minimum size=7pt,label=below:$\times$},
  doublearrow/.style={postaction={decorate},   decoration={markings,mark=at position .6 with {\arrow[line width=1.2pt]{>}}},double distance=1.6pt,thick},
  doublenoarrow/.style={double distance=1.6pt,thick},
  rdoublearrow/.style={postaction={decorate},   decoration={markings,mark=at position .4 with {\arrowreversed[line width=1.2pt]{>}}},double distance=1.6pt,thick},
  rtriplearrow/.style={postaction={decorate},   decoration={markings,mark=at position .4 with {\arrowreversed[line width=1.2pt]{>}}},double distance=2.5pt,thick},
	curvedline/.style={bend=right}
	} 
\theoremstyle{plain}
\newtheorem{theo}{Theorem}[section]
\newtheorem{lem}[theo]{Lemma}
\theoremstyle{definition}
\newtheorem{definition}[theo]{Definition}
\theoremstyle{plain}
\newtheorem{lemma}[theo]{Lemma}
\newtheorem{theorem}[theo]{Theorem}
\newtheorem{corollary}[theo]{Corollary}
\newtheorem{proposition}[theo]{Proposition}
\theoremstyle{definition}
\newtheorem{remark}[theo]{Remark}
\newcommand{\beq}{\begin{equation}}
\newcommand{\eeq}{\end{equation}}
\renewcommand{\a}{\alpha}
\renewcommand{\d}{\delta}
\newcommand{\e}{\epsilon}
\renewcommand{\l}{\lambda}
\newcommand{\m}{\mu}
\def\LSA{Lie  superalgebra}
\renewcommand{\L}{\Lambda}
\newcommand{\out}{\operatorname{out}}
\newcommand{\bC}{\mathbb{C}}
\newcommand{\bR}{\mathbb{R}}
\newcommand{\bZ}{\mathbb{Z}}
\renewcommand{\max}{\rm{max}}
\renewcommand{\gg}{\mathfrak{g}}
\newcommand{\gh}{\mathfrak{h}}
\newcommand{\gm}{\mathfrak{m}}
\newcommand{\gs}{\mathfrak{s}}
\newcommand{\gt}{\mathfrak{t}}
\newcommand{\ggl}{\mathfrak{gl}}
\newcommand{\oder}{\operatorname{der}}
\newcommand\GL{\mathrm{GL}}
\renewcommand\sl{\mathfrak{sl}}
\newcommand{\cF}{\mathcal{F}}
\renewcommand{\square}{\kern1pt\vbox
{\hrule height 0.6pt\hbox{\vrule width 0.6pt\hskip 3pt
\vbox{\vskip 6pt}\hskip 3pt\vrule width 0.6pt}\hrule height0.6pt}\kern1pt}
\DeclareMathOperator\Aut{Aut}
\DeclareMathOperator\ad{ad}
\DeclareMathOperator\vol{vol}
\DeclareMathOperator\Id{Id}
\newcommand\Ker{\operatorname{Ker}}
\newcommand{\wt}{\widetilde}
\newcommand{\soc}{\operatorname{soc}}
\newcommand{\n}{\nabla}
\newcommand{\be}{\begin{equation}}
\newcommand{\ee}{\end{equation}}
\def\<#1,#2>{\langle\,#1,\,#2\,\rangle}
\newcommand{\arr}{\begin{array}{rlll}}
\newcommand{\ea}{\end{array}}
\newcommand{\bea}{\begin{eqnarray}}
\newcommand{\eea}{\end{eqnarray}}
\newcommand{\bean}{\begin{eqnarray*}}
\newcommand{\eean}{\end{eqnarray*}}
\def\sideremark#1{\ifvmode\leavevmode\fi\vadjust{
\vbox to0pt{\hbox to 0pt{\hskip\hsize\hskip1em
\vbox{\hsize3cm\tiny\raggedright\pretolerance10000
\noindent #1\hfill}\hss}\vbox to8pt{\vfil}\vss}}}
\newcounter{ssig}
\newcounter{ttig}
\title[Homogeneous supermanifolds and Lie superalgebras]
{Homogeneous irreducible supermanifolds\\ and graded Lie superalgebras}
\author{D.\ V.\ Alekseevsky, A.\ Santi}
\address{Dmitri V.\ Alekseevsky, 
A.\ A.\ Kharkevich Institute for Information Transmission Problems,
 B. Karetnui  per. 19, 127051, Moscow, Russia and Faculty of Science,
 University of Hradec Kr\'alov\'e, Rokitansk\'eho 62, Hradec Kr\'alov\'e, 50003, Czech Republic}
\email{dalekseevsky@iitp.ru}
\address{Andrea Santi,
Maxwell Institute and School of Mathematics, The University
  of Edinburgh, James Clerk Maxwell Building, Peter Guthrie Tait Road,
  Edinburgh EH9 3FD, Scotland, UK}
\email{asanti.math@gmail.com}
\thanks{EMPG-15-20}
\keywords{}
\subjclass[2010]{17B66, 17B20, 58A50}
\newcommand{\0}{\overline{0}}
\newcommand{\1}{\overline{1}}
\begin{document}
\begin{abstract} 
A  depth one grading  $\mathfrak{g}= \mathfrak{g}^{-1}\oplus \mathfrak{g}^0 \oplus \mathfrak{g}^1 \oplus \cdots \oplus \mathfrak{g}^{\ell}$   of  a  finite  dimensional  Lie  superalgebra  $\mathfrak{g}$ is  called
   nonlinear irreducible  if the isotropy  representation  $ \mathrm{ad}_{\mathfrak{g}^0}|_{\mathfrak{g}^{-1}}$ is irreducible  and  $\mathfrak{g}^1 \neq (0)$.  An  example is  the  full prolongation of  an irreducible  linear Lie  superalgebra $\mathfrak{g}^0 \subset \mathfrak{gl}(\mathfrak{g}^{-1})$  of  finite  type with non-trivial first prolongation.  We   prove  that  a complex  Lie  superalgebra $\mathfrak{g}$
   which admits  a depth one  transitive  nonlinear irreducible grading  is a  semisimple Lie superalgebra   with  the  socle $\mathfrak{s}\otimes \Lambda(\mathbb{C}^n)$, where  $\mathfrak{s}$ is a simple Lie   superalgebra,  and  we  describe    such gradings. The graded Lie superalgebra  $\mathfrak{g}$  defines   an isotropy irreducible  homogeneous  supermanifold   $M=G/G_0$   where   $G$, $G_0$ are Lie  supergroups  respectively associated  with the Lie  superalgebras
   $\mathfrak{g}$ and $\mathfrak{g}_0 := \bigoplus_{p\geq 0} \mathfrak{g}^p$.
\end{abstract}
\maketitle
\null \vspace*{-.25in}
\bigskip
\bigskip
\section{Introduction}
\vskip0.2cm\par\noindent
 An  homogeneous  manifold $M=G/H$ of  a  connected  Lie  group $G$   with  a connected  stability subgroup $H$ is called  (isotropy) \emph{nonlinear}   if  the isotropy  representation
  $j: H \to GL(T_oM)  \simeq GL(\mathfrak{g}/\mathfrak{h})$     has  a positive  dimensional  kernel or, in other  words,  there is  no system of local  coordinates  near $o =eH \in M$   which linearizes  the  action of $H$.
	A nonlinear homogeneous manifold  $M=G/H$  is  called \emph{irreducible} (resp. \emph{primitive}) if  the  isotropy  group $j(H)$ is  irreducible
    (resp.  the  stabilizer $H$ is  a maximal connected  subgroup of $G$).
    
		All nonlinear   irreducible  homogeneous manifolds  had been  classified  by S.\ Kobayashi and  T.\ Nagano (see \cite{KN1, KN2}) and  the nonlinear primitive  homogeneous manifolds  by T.\ Ochiai (see \cite{O1}).  It turns out that any  complex nonlinear primitive   manifold  $M=G/H$  is    the  quotient of  a  complex  simple  Lie  group $G$
by  a maximal parabolic  subgroup $H= P_{\alpha}$  associated   to  a simple root $\alpha$ (the   semisimple part  of $P_{\alpha}$  is    the   semisimple  regular  subgroup of $G$   associated  with    the  Dynkin  diagram  of  $G$    with   deleted   root $\alpha$).   Any  real   nonlinear primitive homogeneous manifold  has  the  form  $M^{\sigma} = G^{\sigma}/P^{\sigma}_{\alpha}$
  where $G^{\sigma}$  is  a real  form of  $G$      defined  by an anti-linear involution of  the Lie  algebra  $\mathfrak{g}=Lie(G)$ which preserves   the  subalgebra  $\mathfrak{p}_{\alpha}=Lie(P_\alpha)$
  and $P_{\alpha}^{\sigma} $   is    the  parabolic  subgroup   associated  with the  fixed  point  set  $\mathfrak{p}_{\alpha}^{\sigma}$  of $\sigma$ in $\mathfrak{p}_{\alpha}$. The   nonlinear primitive manifold  $M= G/P_{\alpha}$  or  its  real  form  $M^{\sigma}=G^\sigma/P_\alpha^\sigma$ is  irreducible  if  and only if   the  Dynkin mark $m_{\alpha}=1$. We recall that  the Dynkin marks  are  the coordinates  of  the maximal  root  with  respect  to the basis of simple  roots.

	The  problem of  classifying the nonlinear primitive   manifolds   reduces   to the  classification  of the nonlinear primitive transitive Lie     algebras  $(L,L_0)$. Here $L$ is a Lie algebra, $L_0$ a maximal subalgebra of $L$  which does not contain  any nontrivial ideal of  $L$ (this condition is called effectivity or transitivity) and such that  the  isotropy  representation $\mathrm{ad}_{L_0} |_{L/L_0}$  has  a kernel  $L_1 \neq (0)$.  Moreover, it   corresponds  to  an irreducible  homogeneous manifold  if  the  isotropy  representation $\mathrm{ad}_{L_0} |_{L/L_0}$ is irreducible. The   subalgebra  $L_0$ defines   a natural  filtration of $L$ (see \cite{Weis})  and the study of the associated  $\bZ$-graded Lie  algebra is a crucial step  for the classification  of the primitive homogeneous manifolds.\\
\medskip\par
     In  the  present  paper,    we  deal  with the    similar problem  for supermanifolds.  Like   for the homogeneous manifolds,  a  homogeneous   supermanifold  can  be   described as a   quotient  $M=G/H$ of  a Lie  supergroup $G$    by a  Lie  supersubgroup $H$   and  the  pair   $(G,H)$  is essentially determined by     the associated pair  $(L =Lie(G), L_0 =Lie(H))$  of  Lie  superalgebras, see  e.g. \cite{Santi} for more details.
    The condition  that  $M=G/H$ is  nonlinear primitive  or irreducible   can also be also rephrased  in   terms of  the pair $(L,L_0)$ of Lie  superalgebras, see \S \ref{preliminary}. We  prove in \S \ref{sec:2} that if   $(L,L_0)$ is  a  complex  nonlinear primitive  Lie  superalgebra,  then the  Lie  superalgebra  $L$ is  semisimple  and  its  socle $\mathrm{soc}(L)$  is necessarily of the form $\mathfrak{s}^{\Lambda}=\gs\otimes\L$, where  $\mathfrak{s}$  is  a  simple Lie  superalgebra and $\Lambda=\Lambda(V^*)=\mathbb{C}1\oplus\Lambda^+$ is the algebra of 
			exterior  forms  on $V = \mathbb{C}^{n}$ for some $n\geq 0$ (we recall the the socle of a Lie superalgebra  $L$ is the  sum  of  all minimal ideals of   $L$).
    According  to  V.\ G.\ Kac in \cite{K0} (see also \cite{Ch}),  any   such Lie  superalgebra  $L$ is a    subalgebra of  the   Lie  superalgebra of  derivations of  the  socle
   $$\mathrm{der}(\mathfrak{s}^{\Lambda})=   \mathrm{der}(\mathfrak{s})\otimes \Lambda\niplus 1 \otimes W\;,$$
    where $W = \mathrm{der}(\Lambda) = \Lambda\otimes \partial_V$ is the algebra of derivations of $\L$ and $\partial_V$ is the space of  ``constant  odd  supervector  fields''. Moreover, the condition of semisimplicity for $L$ is equivalent to the fact that the restriction to $L$ of  the  natural projection
     $$\mathrm{der}(\mathfrak{s}^{\Lambda})\longrightarrow\mathrm{der}(\mathfrak{s}^{\Lambda}) / (\mathrm{der}(\mathfrak{s})\otimes \Lambda  + 1\otimes \Lambda^+ \partial_V ) \simeq \partial_V $$ is  surjective. In this paper any such semisimple Lie superalgebra $L  \subset \mathrm{der}(\mathfrak{s}^{\Lambda})$  is  called an \emph{intermediate admissible} Lie superalgebra. 
		
		Any maximal subalgebra $L_0$ of $L$ defines a filtration (see \cite{Weis, CK}) 
		$$L=L_{-d}\supset L_{-d+1}\supset\cdots\supset L_0 \supset L_1 \supset\cdots$$  and the associated  $\bZ$-graded transitive Lie  superalgebra of  depth $d(\gg)=d$:
		$$\mathfrak{g}=\mathrm{gr}(L)=L_{-d}/L_{-d+1}\oplus\cdots\oplus L_0/L_1\oplus L_1/L_2\oplus\cdots\;.$$ 
		The problem of classifying the  nonlinear  primitive Lie  superalgebras  $(L,L_0)$ where  $L$ is a semisimple  Lie   superalgebra as above is an involved problem.
    The  first  step   for  its solution is   to   describe   transitive   primitive or  irreducible  $\bZ$-graded Lie  superalgebras. In  the  case of Lie  algebras,  any finite  dimensional primitive filtered Lie  algebra  $L$ is  isomorphic  to   the  associated $\bZ$-graded Lie algebra $\gg=\mathrm{gr} (L)$ (see \cite{KN4, O1}). A similar statement is also true for the infinite-dimensional primitive Lie algebras (see \cite{SS, Gui1, MT}).
		
		For Lie  superalgebras, the situation is more  complicated  and  a filtered  Lie superalgebra  can often be obtained  by   an appropriate  deformation of the bracket of the associated  $\bZ$-graded  Lie  superalgebra (see \cite{ChK0}; see also \cite{FS} for an instance of this phenomenon in the context of supersymmetric field theories). The full classification of the infinite-dimensional primitive (filtered) Lie superalgebras   was obtained  by N.\ Cantarini, S.-J.\ Cheng  and V.\  Kac in \cite{CK, ChK}.	
		\medskip\par
      The present paper gives a full description  of the \emph{finite  dimensional nonlinear transitive irreducible $\bZ$-graded}  Lie  superalgebras $\mathfrak{g} = \bigoplus_{i=-1}^{\ell} \mathfrak{g}^i$.
          In this case, the grading  of  $\mathfrak{g}$  defines   a filtration of the form
          $\mathfrak{g}_{-1} = \mathfrak{g} \supset \mathfrak{g}_0 \supset \mathfrak{g}_1 \supset \cdots $    where
          $\mathfrak{g}_p := \bigoplus_{i \geq p} \mathfrak{g}^i $ and  the associated  pair $(L,L_0)=(\mathfrak{g}, \mathfrak{g}_0)$ is an irreducible Lie superalgebra which is nonlinear, that is with $L_1=\gg_1=\bigoplus_{i\geq 1}\gg^i\neq (0)$. The study of the filtered deformations
				of $\mathfrak{g} = \bigoplus_{i \geq -1} \mathfrak{g}^i$ will be the content of a future work. 
				
				We remark  that  the  \emph{full} prolongation of a  linear Lie  superalgebra  $\mathfrak{g} \subset  \mathfrak{gl}(V)$ which acts irreducibly on a supervector space $V=V_{\0}\oplus V_{\1}$  can be  defined  as the  {\it maximal} depth one   $\bZ$-graded transitive
          Lie   superalgebra $V\oplus\mathfrak{g} \oplus \mathfrak{g}^{(1)} \oplus \mathfrak{g}^{(2)}\oplus \cdots$   which  extends  $V \oplus \mathfrak{g}= \mathfrak{g}^{-1} \oplus \mathfrak{g}^0$. All  such full prolongations which are infinite-dimensional were  described in  \cite{K2, ChK}.
	On the other hand	the   full prolongation
 of an irreducible linear   Lie  superalgebra  $\mathfrak{g}$  with nontrivial first prolongation $\mathfrak{g}^{(1)} \neq (0)$
 is finite  dimensional if  and only if  there are no elements in $\ad_{\gg_{0}}|_{(\gg_{-1})_{\0}}$ of rank one,  see \cite{ALS, Wil}, and  the class of all  such  graded  Lie  superalgebras is  just a subclass of the
  finite dimensional nonlinear transitive irreducible graded Lie  superalgebras  described  below. This  gives, in particular, a description of all the irreducible representations of Lie  superalgebras with a nontrivial first prolongation. This description is clearly
of interest \emph{per se} but we would also like to  mention that some representations with nontrivial first prolongation had recently played an important r$\hat{\text o}$le in a
partial generalization to the case of supermanifolds of the classical result of M.\ Berger on the possible irreducible holonomy algebras of Riemannian manifolds (see \cite{Ga, Ga2}).
\medskip\par
The main results of this paper are Theorem \ref{importantcase} in \S \ref{sec:3} and Theorem \ref{main1} in \S \ref{sec:4} and
the description in \S \ref{basicgrad}, \S \ref{strangegrad} and \S \ref{cartangrad} of all the $\bZ$-gradings of depth one of the Lie superalgebras $\mathrm{der}(\mathfrak{s})$ of derivations of simple Lie superalgebra $\mathfrak{s}$. We summarize the main results in the following Theorem
\ref{collec} for the reader's convenience. To state it, we recall that 
the Lie superalgebra $\mathrm{der}(\mathfrak{s})$ always admits a canonical decomposition into the semidirect  sum   $\mathrm{der}(\mathfrak{s}) = \mathfrak{s} \niplus \mathrm{out}(\mathfrak{s})$ of the ideal  of
inner derivations (identified  with  $\mathfrak{s}$) and a  complementary subalgebra $\out(\gs)$ of outer  derivations (see \cite{K0}).
\begin{theorem}
\label{collec}
Let  $\mathfrak{g} = \bigoplus_{i=-1}^{\ell} \mathfrak{g}^i$ be a  transitive  nonlinear  irreducible  $\mathbb{Z}$-graded  Lie  superalgebra of  depth one.  
Then $\gg$ is a semisimple Lie superalgebra with socle $\mathrm{soc}(\mathfrak{g})= \mathfrak{s}^{\Lambda}=\gs\otimes\L$ where  $\mathfrak{s}$ is a uniquely  determined  simple Lie  superalgebra  and $\Lambda$ is  the algebra of exterior forms on $V=\bC^n$, for $n\geq 0$.   
The superalgebra $\mathfrak{g}$  is   a graded  subalgebra   of the  Lie  superalgebra  
\begin{align*}
\oder(\mathfrak{s}^{\Lambda}) &= \mathrm{der}(\mathfrak{s})\otimes \Lambda \niplus 1\otimes W\\
&=\mathfrak{s}^\Lambda  \niplus \mathrm{out}(\mathfrak{s}^{\L})\;,
\end{align*}
where $W=\mathrm{der}(\L)$ and $\mathrm{out}(\mathfrak{s}^{\L})=  \mathrm{out}(\mathfrak{s})\otimes \L \niplus 1 \otimes W$, with one of  the  following  gradings:
\begin{itemize}
\item[(I)] $\gs=\bigoplus\gs^p$ has a $\bZ$-grading of depth $1$ and $\L=\L^0$ and $W=W^0$ have the trivial gradings;
\item[(II)] $\gs=\gs^0$ has the trivial grading whereas $\L$ and $W$ have the gradings defined by a  grading $V=V^0\oplus V^{1}$ with  $\dim V^{1}=1$.
\end{itemize}
If $n=0$ then the socle of $\gg$ is a simple Lie superalgebra $\gs$, the grading of $\gg$ is as in (I) and $\gg$ is a $\bZ$-graded Lie superalgebra of the form $\mathfrak{g}= \mathfrak{s}\niplus F$ where $F \subset   \mathrm{out}(\mathfrak{s})$ is a nonnegatively $\bZ$-graded subalgebra of $\out(\gs)$. The relevant $\bZ$-gradings of $\out(\gs)$ and $\oder(\gs)$ are described in \S \ref{basicgrad}, \S \ref{strangegrad} and \S \ref{cartangrad}. If $n>0$ then $\gg$ is the semidirect sum $\mathfrak{g} = \mathfrak{s}^{\Lambda} \niplus  F$,  where $F=\bigoplus F^p$ is a  nonnegatively graded   subalgebra  of  the Lie  superalgebra $\mathrm{out}(s^{\Lambda})$ which is  admissible, that is    the  natural projection from $F$ to $\partial_V$ is  surjective.

Conversely any grading as in (I) or (II) defines a transitive nonlinear and irreducible grading of depth one of the Lie superalgebra $\gg=\gs^\L\niplus F$ for any nonnegatively graded subalgebra $F$ of $\out(\gs^\L)$ which is admissible.
\end{theorem}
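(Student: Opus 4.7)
The plan is to assemble the statement from the structural results already established in the paper. By the main theorem of \S \ref{sec:2}, the filtered pair $(L,L_0)=(\gg,\gg_0)$ associated to a transitive nonlinear irreducible depth one grading is a complex nonlinear irreducible filtered Lie superalgebra, so $\gg$ is semisimple with socle $\mathrm{soc}(\gg)=\gs^{\Lambda}=\gs\otimes\Lambda$ for a uniquely determined simple $\gs$ and $\Lambda=\Lambda(V^*)$ with $V=\bC^n$. Kac's description then embeds $\gg$ as an intermediate admissible subalgebra of $\oder(\gs^{\Lambda})=\oder(\gs)\otimes\Lambda\niplus 1\otimes W$, and semisimplicity is equivalent to the projection $\gg\to\partial_V$ being surjective. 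This gives the ambient embedding and the admissibility condition asserted by the theorem.

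The next step is to exploit compatibility with the $\bZ$-grading. Since the socle is a graded ideal, the grading of $\gs^{\Lambda}$ descends from compatible gradings on $\gs$ and on $V$ (the latter inducing gradings on $\Lambda$ and $W$). The irreducibility of $\ad_{\gg^0}$ on $\gg^{-1}$ together with the depth one hypothesis forces $\gg^{-1}$ to be an irreducible $\gg^0$-submodule of $(\gs^{\Lambda})^{-1}\oplus(1\otimes W)^{-1}$. Theorem \ref{importantcase} should pin this down to one of two mutually exclusive cases: either (I) $\Lambda$ carries the trivial grading and the whole depth one grading lives on $\gs$, or (II) $\gs$ is concentrated in degree zero and the grading is induced by a decomposition $V=V^0\oplus V^1$; in the second case I expect $\dim V^1=1$ to be forced because otherwise $(\gs^{\Lambda})^{-1}\supset\gs\otimes V^1$ would split as a non-irreducible $\gg^0$-module. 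Theorem \ref{main1} then yields the semidirect decomposition $\gg=\gs^{\Lambda}\niplus F$, since transitivity and irreducibility push the outer part $F\subset\out(\gs^{\Lambda})$ into nonnegative degrees (a nontrivial component of $F$ in degree $-1$ would violate the requirement that $\gg^{-1}$ lie in the socle, cf.\ the irreducibility argument).

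When $n=0$ the algebra $\Lambda$ reduces to $\bC$, the socle is simple, admissibility is vacuous, and only case (I) arises; the precise list of gradings then matches the depth one $\bZ$-gradings of $\out(\gs)$ and $\oder(\gs)$ classified in \S \ref{basicgrad}, \S \ref{strangegrad} and \S \ref{cartangrad} for the basic, strange and Cartan series. For the converse, given an admissible nonnegatively graded $F\subset\out(\gs^{\Lambda})$ together with a grading of type (I) or (II), transitivity follows from admissibility (which supplies elements of $\partial_V$ acting nontrivially in positive degree), nonlinearity follows from the existence of $\gs^1\neq(0)$ in type (I) or of a degree $+1$ element of $\partial_V$ in type (II), and irreducibility of the isotropy can be read off case by case from the classification of gradings. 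The main obstacle I anticipate is the precise weight-space analysis inside $\oder(\gs^\Lambda)$ under the $\gg^0$-action which establishes the dichotomy (I)/(II) and the constraint $\dim V^1=1$ in (II); this is the content of Theorem \ref{importantcase} and is where the interplay between depth one, irreducibility, and the tensor structure $\gs\otimes\Lambda$ is most subtle.
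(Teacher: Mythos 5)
Your high-level skeleton coincides with the paper's: semisimplicity and the socle $\gs\otimes\L$ come from Proposition \ref{graduato} (via Corollary \ref{iniziobis} and Theorem \ref{block}), the ambient embedding into $\oder(\gs^\L)$ with the admissibility condition from Theorem \ref{block} and Proposition \ref{torostand}, the dichotomy (I)/(II) from Proposition \ref{firsthm} (not from Theorem \ref{importantcase}, which only treats the simple-socle case $n=0$), and the decomposition $\gg=\gs^\L\niplus F$ with $F$ nonnegatively graded, together with the converse, from Theorem \ref{main1}. However, two of your justifications do not work as stated. In the dichotomy, the constraint $\dim V^1=1$ in case (II) is a depth phenomenon, not an irreducibility one: if $U^{-1}$ had dimension at least $2$ then $\L^{-2}\neq(0)$ (the product of two degree $-1$ generators), so $\gs\otimes\L^{-2}$ would sit in degree $-2$ and the grading would have depth at least $2$; this is Lemma \ref{gradL0}. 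Moreover, in case (I) you still have to exclude a nontrivial nonnegative grading of $\L$ of type $\vec k$ with $k_m>0$; the paper does this using admissibility, since an element of $\gg$ related to $\partial_{\xi^m}$ has degree $-k_m\leq -1$ and would have to lie in $\gg^{-1}=\gs^{-1}\otimes\L^0$, which is impossible.

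The more serious gap is in the converse, where you invert the roles of the hypotheses. Transitivity does not follow from admissibility: it follows from part (i) of Lemma \ref{lemmetto} together with the explicit degree decomposition of $\oder(\gs^\L)$ in \eqref{case(i)max}--\eqref{case(ii)max}. Conversely, irreducibility of $\ad_{\gg^0}$ on $\gg^{-1}$ cannot be ``read off case by case from the classification of gradings'': for $n>0$ the module $\gg^{-1}=\gs^{-1}\otimes\L$ is a sum of $2^n$ copies of the irreducible $\gs^0$-module $\gs^{-1}$, and $\gs^{-1}\otimes\L^+$ (positive principal degree) is invariant under $\oder^0(\gs)\otimes\L$ and under $\L^+\partial_V$, hence a proper nonzero submodule if $F$ fails to be admissible. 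What kills it is precisely admissibility: the elements of $\gg^0$ related to the constant supervector fields $\partial_{\xi^i}$ lower the principal degree in $\L$, and the descending induction of Theorem \ref{main1} then shows that any nonzero $\gg^0$-submodule of $\gg^{-1}$ is everything. Since irreducibility genuinely fails for non-admissible $F$, no case-by-case inspection of the gradings of $\gs$ can supply this step, and your argument for the converse is incomplete at exactly this point.
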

The paper is organized   as  follows. In \S \ref{sec:2} we give the main preliminary definitions and results on nonlinear primitive Lie superalgebras and  recall the  description of the semisimple  Lie superalgebras. In \S \ref{sec:3.1}, \S \ref{sec:3.2}, \S \ref{sec:3.3} and \S \ref{sec:3.4} we give some basic results on irreducible $\bZ$-graded Lie superalgebras, describe the   gradings of the Grassmann  algebra  $\Lambda$  and  its Lie  superalgebra  $W=\mathrm{der}(\Lambda)$  of derivations and, finally, give the description of the $\bZ$-gradings of the semisimple Lie superalgebras with socle $\gs^\L=\gs\otimes\L$. Section 
\ref{weaklyouter} contains Theorem \ref{importantcase} and the description of the gradings of $\oder(\gs)$ associated with the depth one gradings of the simple Lie superalgebras $\gs$. We mention that, besides the well-known last grading of the strange Lie superalgebra $\gs=\mathfrak{spe}(n)$ described in Table \ref{primatab35},  there is another interesting grading that is of depth $d=1$ and height $\ell=2$: it is a grading on a subalgebra of the algebra $\oder(\gs)$ of derivations of the basic Lie superalgebra $\gs=\mathfrak{psl}(2|2)$ (see Table \ref{TABSL22}). We apply  these  results   in  Section \ref{sec:4} to the description in Theorem \ref{main1} of the  depth one  transitive nonlinear  irreducible  gradings of  semisimple  Lie  superalgebras and conclude with some observations.
\subsubsection*{Notations}\hfill
\vskip0.2cm\par
Given any supervector space $V=V_{\bar 0}\oplus V_{\bar 1}$, we denote by 
$$\Pi V=(\Pi V)_{\bar 0}\oplus(\Pi V)_{\bar 1}$$ the supervector space with opposite parity, that is 
\begin{equation}
\label{pcf}
(\Pi V)_{\bar 0}=V_{\bar 1}\;\;,\qquad(\Pi V)_{\bar 1}=V_{\bar 0}
\end{equation}
as (non-super) vector spaces. We set $\bC^{m|n}=\bC^{m}\oplus\Pi(\bC^{n})$. 

The tensor product $V\otimes V'$ of two supervector spaces has a natural structure of supervector space given by
$$
(V\otimes  V')_{\overline 0}=(V_{\overline 0 }\otimes V'_{\overline 0})\oplus (V_{\overline 1}\otimes V'_{\overline 1})\,,\,(V\otimes  V')_{\overline 1}=(V_{\overline 0 }\otimes V'_{\overline 1})\oplus (V_{\overline 1}\otimes V'_{\overline 0})\ .
$$
In \S \ref{basicgrad}-\S\ref{strangegrad} (and only there) we use the symbols
$\L(\bC^{m|n})$ and $S(\bC^{m|n})$ to denote the exterior and symmetric products of $V=\bC^{m|n}$ in the super sense.

We denote finite-dimensional simple Lie superalgebras according to the conventions in e.g. \cite{Serga, Leites} (some authors use different conventions, especially for classical Lie superalgebras; cf. \cite{K0, Sc}). 

The projectivization of a linear Lie superalgebra $\gg$ which contains the scalar matrices is denoted by $\mathfrak{pg}=\gg/\mathbb{C}\Id$, if it does not contain $\mathbb C\Id$ we set $\mathfrak{pg}:=\gg$.

Finally the symbol $\niplus$ stands for the semidirect sum  of Lie superalgebras. If 
$\gg=\gg_{1}\niplus\gg_{2}$ then $\gg_1$ (resp. $\gg_2$) is an ideal (resp. a subalgebra) of $\gg$.
\vskip0.2cm\par\noindent
\subsubsection*{Acknowledgments}\hfill
\vskip0.2cm\par
D.\ V.\ A.\ acknowledges support of an  RNF
grant (project n.14-50-00150) and institutional support of the University of Hradec Kr\'alov\'e. A.\ S.\ is supported by a
Marie-Curie research fellowship of the ``Istituto Nazionale di Alta
Matematica" (Italy).  We would like to thank our respective
funding agencies for their support.

The authors are grateful to the anonymous referee for
his helpful comments.
\bigskip
\bigskip
\par
\section{Nonlinear primitive Lie superalgebras}
\label{sec:2}
\subsection{Preliminary definitions and results}\hfill\vskip0.2cm\par\noindent\label{preliminary}
Let $L=L_{\0}\oplus L_{\1}$ be a finite dimensional Lie superalgebra over the field of complex numbers and $L_0$ a subalgebra of $L$. The pair $(L,L_0)$ is called a {\it transitive Lie superalgebra} if $L_0$ does not contain any nonzero ideal of $L$. A transitive Lie superalgebra is called {\it nonlinear primitive} if
\begin{itemize}
\item[(a)] $L_0$ is a maximal subalgebra of $L$ ({\it primitive});
\item[(b)] the isotropy representation $\left.\ad_{L_0}\right|_{L/L_0}$ of $L_0$ on $L/L_0$ has a nontrivial kernel, i.e., the subspace $L_1$ defined by
$$
L_1=\left\{x\in L_0\!\!\!\!\!\!\!\!\!\phantom{L^{L^{L}}}|\,[x,L]\subset L_0\right\}
$$
is nonzero ({\it nonlinear}).
\end{itemize}
A transitive Lie superalgebra $(L,L_0)$ is called {\it irreducible} if the isotropy
representation is irreducible;
it is immediate that any irreducible transitive Lie superalgebra is primitive. Two transitive Lie superalgebras $(L,L_0)$ and $(L',L_0')$ are {\it isomorphic} if there exists a Lie superalgebra isomorphism
$\varphi:L\to L'$ with $\varphi(L_0)=L_0'$. 
\vskip0.2cm\par
The following result is proved in \cite[Lemmata 1 and 2]{O1} (the proof is given in the
Lie algebra case but it extends verbatim to the superalgebra case).
\begin{lemma}
\label{inizio}
Let $(L,L_0)$ be a nonlinear primitive Lie superalgebra. Then:
\begin{itemize}
\item[(i)] $L_0\neq L_1$;
\item[(ii)] $[L_0,L_1]\subset L_1$;
\item[(iii)] $[L,L_1]\subset L_0$.
\end{itemize}
Furthermore:
\begin{itemize}
\item[(iv)]  if $J$ is a nonzero ideal of $L$, then $J+L_0=L$;
\item[(v)] if $J_1$ and $J_2$ are ideals of $L$ such that $J_1\cap L_0\neq (0)$ and $J_2\neq (0)$, then 
$[J_1,J_2]\neq (0)$;
\item[(vi)] if $J$ is a nonzero ideal of $L$, then $J\cap L_0\neq (0)$;
\item[(vii)] two nonzero ideals of $L$ are never in direct sum.
\end{itemize}
\end{lemma}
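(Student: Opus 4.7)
The plan is to prove the seven items in the order they are stated, using each as a stepping stone for the next, and relying only on the maximality of $L_0$ and on the transitivity condition that $L_0$ contains no nonzero ideal of $L$. First, (iii) is nothing but the definition of $L_1$. For (ii), I would take $x\in L_0$, $y\in L_1$ and any $z\in L$, apply the graded Jacobi identity to $[[x,y],z]$, and observe that each of the two terms on the right-hand side is a bracket of an element of $L_0$ against an element of $L_0$ (using (iii) to place either $[y,z]$ or $[x,z]$ in $L_0$), hence lies in $L_0$; since also $[x,y]\in L_0$, this gives $[x,y]\in L_1$. For (i), if $L_0=L_1$ then (iii) reads $[L,L_0]\subset L_0$, making $L_0$ an ideal of $L$; transitivity then forces $L_0=(0)$, whence $L_1=(0)$, contradicting nonlinearity. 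Finally (iv) is immediate: $J+L_0$ is a subalgebra strictly containing $L_0$ (since $J\not\subset L_0$ by transitivity), so maximality yields $J+L_0=L$.

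The heart of the argument is (v). I would argue by contradiction and assume $[J_1,J_2]=(0)$. Using (iv) applied to $J_2$, I decompose $L=J_2+L_0$; then for every $x\in J_1\cap L_0$ and every $z=j_2+\ell_0\in L$ one has $[x,z]=[x,j_2]+[x,\ell_0]=0+[x,\ell_0]\in L_0$, which shows $J_1\cap L_0\subset L_1$. Now for $x\in J_1\cap L_0$ and $z\in L$ the bracket $[z,x]$ lies simultaneously in $L_0$ (by definition of $L_1$) and in $J_1$ (since $J_1$ is an ideal), so $[z,x]\in J_1\cap L_0$. This exhibits $J_1\cap L_0$ as a nonzero $\ad(L)$-invariant subspace contained in $L_0$, i.e.\ a nonzero ideal of $L$ contained in $L_0$, contradicting transitivity. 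This is the step I expect to require the most care, because a priori $J_1\cap L_0$ is only a subalgebra, and its $\ad(L)$-stability emerges only after combining the decomposition $L=J_2+L_0$ with the commutativity hypothesis $[J_1,J_2]=(0)$.

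With (v) in hand, (vi) and (vii) are formal. For (vi), assuming $J\cap L_0=(0)$, part (iii) together with $J$ being an ideal gives $[L_1,J]\subset L_0\cap J=(0)$, so the centralizer $C_L(J)=\{x\in L:[x,J]=(0)\}$ contains $L_1$ and its intersection with $L_0$ is nonzero; a short graded Jacobi identity computation shows $C_L(J)$ is itself an ideal of $L$, and then (v) applied to $C_L(J)$ and $J$ forces $[C_L(J),J]\neq(0)$, contradicting the very definition of centralizer. For (vii), if two nonzero ideals $J_1,J_2$ satisfied $J_1\cap J_2=(0)$ then $[J_1,J_2]\subset J_1\cap J_2=(0)$; but (vi) gives $J_1\cap L_0\neq(0)$ and then (v) yields $[J_1,J_2]\neq(0)$, a contradiction. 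This completes the proof.
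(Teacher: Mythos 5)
Your proof is correct and follows essentially the classical argument that the paper itself does not reproduce but cites from Ochiai's Lemmata 1 and 2 (transferred verbatim to the super setting): (iii) from the definition, (ii) by the Jacobi identity, (i) and (iv) from transitivity and maximality, the key step (v) via the decomposition $L=J_2+L_0$ showing $J_1\cap L_0$ is a nonzero ideal inside $L_0$, and (vi)--(vii) via the centralizer. One parenthetical in your step (ii) is slightly misphrased --- $[x,z]$ with $x\in L_0$, $z\in L$ need not lie in $L_0$; the term $[[x,z],y]$ lands in $L_0$ simply because $y\in L_1$ and $[L,L_1]\subset L_0$ by (iii) --- but the computation goes through unchanged.
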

Recall that a Lie superalgebra is called {\it semisimple} if its radical is zero, equivalently if it does not contain any nonzero abelian ideal.
\begin{corollary}
\label{iniziobis}
Let  $(L,L_0)$ be a  nonlinear primitive  Lie   superalgebra.  Then $L$ is  a  semisimple Lie   superalgebra and  it is not  the direct  sum  of  two nonzero ideals.
\end{corollary}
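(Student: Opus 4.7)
The plan is to derive both assertions directly from parts (v), (vi) and (vii) of Lemma~\ref{inizio}; no further structural input is needed. The proof will consist of two short arguments by contradiction, one for each assertion.

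For the semisimplicity claim, I would use the characterization recalled just before the statement, namely that having trivial radical is equivalent to the absence of nonzero abelian ideals, and derive a contradiction from the existence of a nonzero abelian ideal $J\subset L$. First, by (vi) applied to $J$ I obtain $J\cap L_0\neq (0)$. Then (v) applied with $J_1=J_2=J$ yields $[J,J]\neq (0)$, contradicting the hypothesis that $J$ is abelian.

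For the second assertion, which is essentially a reformulation of (vii), I would argue as follows. Suppose $L=J_1\oplus J_2$ with nonzero ideals $J_1,J_2$. Since $J_1\cap J_2=(0)$ and both $J_i$ are ideals, the inclusion $[J_1,J_2]\subseteq J_1\cap J_2$ forces $[J_1,J_2]=(0)$. On the other hand, (vi) applied to $J_1$ gives $J_1\cap L_0\neq (0)$, and then (v) applied to the pair $(J_1,J_2)$ produces $[J_1,J_2]\neq (0)$, a contradiction.

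The main obstacle is essentially absent: the corollary is a formal consequence of Lemma~\ref{inizio}, and in particular no use is made of the maximality of $L_0$ beyond what is already packaged into the lemma. The only point requiring mild care is the invocation of the characterization of semisimplicity via abelian ideals; in the Lie superalgebra setting this amounts to noting that any nonzero solvable ideal produces a nonzero abelian ideal by passing to the last nontrivial term of its derived series, so the two formulations of semisimplicity are indeed equivalent.
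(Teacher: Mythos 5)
Your proof is correct and is exactly the argument the paper intends: the corollary is stated as an immediate formal consequence of parts (v), (vi) and (vii) of Lemma~\ref{inizio} (which is why the paper gives no separate proof), and your two contradiction arguments — ruling out a nonzero abelian ideal via (vi) then (v), and ruling out a direct sum decomposition via (vi) and (v), which is just (vii) — are the standard route. Your closing remark on the equivalence of the two formulations of semisimplicity is the right justification for the characterization the paper recalls before the statement.
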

In the Lie algebra case this immediately yields that $L$ is simple, $L_1$ abelian and $L_2=(0)$ (see \cite[Lemma 3]{O1}). 
The corresponding statement is not true in the Lie superalgebra case and one has
to separately consider simple and semisimple Lie superalgebras. 

Semisimple Lie superalgebras are described in \cite{K0} (see also \cite[Section 7]{Ch}).
We recall here only the facts that we need and refer to those texts for more details.
\subsection{Semisimple Lie superalgebras}\label{desofsemi}\hfill\vskip0.2cm\par\noindent
Let $n$ be a nonnegative integer and $V=\mathbb{C}^{n}$. We denote by $\L=\L(n)=\L(V^*)$ the Grassmann algebra of $V^*$ with its natural parity decomposition $\Lambda=\Lambda_{\0} \oplus\Lambda_{\1}$ and by
$$W=W(n)=\mathrm{der}(\L)=\Lambda\otimes\partial_V$$
the \LSA \, of  supervector fields on the $n$-dimensional purely odd linear supermanifold. 
Here $\partial_V=\{\partial_{\xi}\,|\,\xi\in V^*\}\simeq V$ is the \LSA\,of  constant  supervector  fields. 
There exists a direct sum decomposition 
\beq
\label{splitvect}
W=\partial_V\oplus \Lambda^+\partial_V\;,
\eeq 
where $\Lambda^{+}$ is the (not unital) subalgebra of $\Lambda$ generated by $V^*$.
\begin{definition}\label{tr} 
A subalgebra of $W$ is called {\it admissible} if it projects surjectively to $\partial_V$ with respect to the decomposition \eqref{splitvect}.
\end{definition}

Let $\gs_1,\ldots,\gs_N$ be simple \LSA s and $n_1,\ldots,n_N$ nonnegative integers. We associate two Lie  superalgebras
\beq
\label{algmin}
\gs^{\L}=\bigoplus\gs_i\otimes \Lambda(n_i)
\eeq
and 
\begin{align*}
\gs_{\max}=\oder(\gs^{\L})&=\bigoplus\oder(\gs_i\otimes \Lambda(n_i))\\
&=\bigoplus(\oder(\gs_i)\otimes\Lambda(n_i)\niplus \textbf{1}\otimes W(n_i)) 
\end{align*}
and consider the natural projections
\begin{multline}
\label{bigpro}
\pi_i: \gs_{\max} \to \gs_{\max}/\Ker\pi_i\simeq W(n_i)\qquad\text{with}\\
\Ker\pi_{i}=\bigoplus_{j\neq i}\oder(\gs_j\otimes\Lambda(n_j))\oplus(\oder(\gs_i)\otimes\Lambda(n_i))\;.\,
\end{multline}
Note that $\gs^{\L}$ is a subalgebra of $\gs_{\max}$ in a natural way. We call any subalgebra 
$\gs^{\L}\subset \gg \subset \gs_{\max}$ an {\it intermediate subalgebra}.
\begin{definition}\label{semisimple}
An intermediate subalgebra $\gg$ is  {\it admissible}
if its projection $\pi_i(\gg)$ is an admissible subalgebra of $W(n_i)$ for all $i=1,\ldots, N$.
\end{definition}
Recall that the {\it socle} $\soc(\gg)$ of a Lie superalgebra $\gg$ is usually defined as the sum of all minimal ideals of $\gg$.
\begin{theorem}\cite{K0}\label{block}
An intermediate subalgebra $\gg$ is semisimple if and only if it is admissible and any semisimple Lie superalgebra is an admissible intermediate subalgebra,
for some $\gs_1,\ldots,\gs_N$ and $n_1,\ldots,n_N$. 
Moreover if $\gg$ is semisimple:
\begin{itemize} 
\item[(i)] each $\gs_i\otimes\Lambda(n_i)$ is a minimal ideal of $\gg$ and
$\soc(\gg)=\gs^{\Lambda}$;
\item[(ii)] $\oder(\gg)$ coincides with the normalizer $N_{\gs_{\max}}(\gg)$ of $\gg$ in $\gs_{\max}$.
\end{itemize}
\end{theorem}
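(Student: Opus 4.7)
The plan is to deduce the theorem by analyzing the ideal structure of an intermediate subalgebra $\gg$ with $\gs^{\Lambda}\subset\gg\subset\gs_{\max}$, with admissibility playing the role that separates the semisimple cases from the others. I would split the argument into: (1) showing $\gs_i\otimes\Lambda(n_i)$ are minimal ideals of any admissible $\gg$; (2) deducing the two directions of the equivalence and part (i); (3) proving the derivation statement (ii).

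The first observation is that each factor $\gs_i\otimes\Lambda(n_i)$ is an ideal of $\gs_{\max}$: indeed $\gs_{\max}$ is the direct sum of the Lie superalgebras $\oder(\gs_j\otimes\Lambda(n_j))$, and $\gs_i\otimes\Lambda(n_i)$ is an ideal in its own summand $\oder(\gs_i\otimes\Lambda(n_i))$. Hence every such factor is already an ideal of $\gs_{\max}$, and a fortiori of $\gg$, and in particular $\gs^{\Lambda}$ is an ideal of $\gg$.

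The crucial step is that admissibility forces each $\gs_i\otimes\Lambda(n_i)$ to be a \emph{minimal} ideal of $\gg$. Let $J\subset\gs_i\otimes\Lambda(n_i)$ be a nonzero $\gg$-ideal. Since $\gs_i\otimes 1\subset\gg$ acts on $\gs_i\otimes\Lambda(n_i)$ through the simple algebra $\gs_i$ tensored with the identity, $J$ is a $\gs_i$-submodule and must therefore be of the form $J=\gs_i\otimes U$ for a $\bZ_2$-graded subspace $U\subset\Lambda(n_i)$. By admissibility, for every $v\in V=\bC^{n_i}$ there exists some $X_v\in\gg$ whose $i$-th projection equals $\partial_v$ modulo $\Lambda^+\partial_V$. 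Bracketing $X_v$ with elements of $J$ decreases the exterior degree by one up to lower-degree corrections in $\Lambda^+\cdot U$, so an induction on the top degree of elements of $U$ forces $1\in U$, and then reversing and multiplying by generators of $\Lambda^+$ (through the action of $\Lambda^+\otimes\gs_i\subset\gs^{\Lambda}$ on itself) yields $U=\Lambda(n_i)$.

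Since each $\gs_i\otimes\Lambda(n_i)$ contains the non-abelian simple factor $\gs_i\otimes 1$, the direction ``admissible $\Rightarrow$ semisimple'' now follows: any nonzero abelian ideal of $\gg$ would contain a minimal ideal of $\gg$, which is impossible. Conversely, that every finite-dimensional semisimple Lie superalgebra arises as an admissible intermediate subalgebra rests on the classification result (which I would quote from Kac) that any minimal ideal of a semisimple Lie superalgebra is isomorphic to some $\gs\otimes\Lambda(n)$ with $\gs$ simple. Given this, the adjoint representation embeds $\gg\hookrightarrow\oder(\soc(\gg))=\gs_{\max}$, and admissibility of the resulting inclusion is forced by the absence of nonzero abelian ideals via the previous paragraph. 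Part (i) is an immediate corollary of these steps.

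For part (ii), the inclusion $N_{\gs_{\max}}(\gg)\subset\oder(\gg)$ is immediate since conjugation by elements normalizing $\gg$ induces derivations. For the reverse inclusion, any $D\in\oder(\gg)$ preserves the characteristic ideal $\soc(\gg)=\gs^{\Lambda}$ and restricts to a derivation of it; since by a theorem of Kac $\oder(\gs^{\Lambda})=\gs_{\max}$ consists entirely of inner derivations of $\gs_{\max}$, there is a unique $x\in\gs_{\max}$ with $\ad_x|_{\gs^{\Lambda}}=D|_{\gs^{\Lambda}}$, and the fact that $D$ extends to a derivation of the larger algebra $\gg\supset\gs^{\Lambda}$ forces $x$ to normalize $\gg$. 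The main obstacle in the whole argument is the input on minimal ideals of semisimple Lie superalgebras, which is deep and which I would invoke from \cite{K0} rather than reprove.
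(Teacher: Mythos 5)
First, a point of comparison: the paper does not actually prove Theorem \ref{block} --- it is imported wholesale from \cite{K0} (see also \cite{Ch}), so your reconstruction is being judged on its own merits rather than against an argument in the text. Your overall strategy (reduce ideals of $\gg$ inside a factor to $\gs_i\otimes U$ with $U\subset\Lambda(n_i)$, and use admissibility to peel off exterior degrees until $U$ contains a unit, hence $U=\Lambda U=\Lambda(n_i)$) is sound and is essentially Kac's. Two remarks on that step: the bookkeeping is reversed --- the error terms coming from $\Lambda^+\partial_V$ \emph{raise} the exterior degree of the main term $\partial_v a$, so the induction must run on the \emph{lowest} degree occurring in elements of $U$, not the top degree; and the reduction $J=\gs_i\otimes U$ silently uses that the adjoint module of a simple Lie superalgebra has endomorphism ring $\bC$, which deserves at least a mention.

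There are, however, two genuine gaps. (1) Knowing that each $\gs_i\otimes\Lambda(n_i)$ is a minimal ideal does not yet give semisimplicity, nor $\soc(\gg)=\gs^{\Lambda}$: a priori $\gg$ could possess \emph{other} minimal ideals, possibly abelian, meeting $\gs^{\Lambda}$ trivially. The missing ingredient is that the centralizer of $\gs^{\Lambda}$ in $\gs_{\max}=\oder(\gs^{\Lambda})$ is zero (if $[D,\ad z]=0$ for all $z\in\gs^{\Lambda}$ then $\ad(Dz)=0$, and $\gs^{\Lambda}$ is centerless, so $D=0$); hence any nonzero ideal $I$ of $\gg$ satisfies $(0)\neq[I,\gs^{\Lambda}]\subset I\cap\gs^{\Lambda}$ and therefore, by your minimality step, contains some $\gs_i\otimes\Lambda(n_i)$. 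This single lemma closes the semisimplicity claim, part (i), and also supplies the injectivity you use tacitly in part (ii) (both for $N_{\gs_{\max}}(\gg)\hookrightarrow\oder(\gg)$ and for concluding $D=\ad_x$ on all of $\gg$ from $D=\ad_x$ on $\gs^{\Lambda}$). (2) The implication ``semisimple $\Rightarrow$ admissible'' is not proved: you assert it is ``forced \ldots via the previous paragraph,'' but that paragraph establishes the \emph{opposite} implication. What is needed is a construction of a nonzero abelian (or solvable) ideal whenever $\pi_i(\gg)$ fails to surject onto $\partial_V$ --- for instance, in the extreme case $\pi_i(\gg)\subset\Lambda^{+}\partial_V$ the subspace $\gs_i\otimes\Lambda^{(n_i)}$ (top exterior degree) is an abelian ideal of $\gg$; the general case requires the analogous ideal built from the directions missed by $\pi_i(\gg)$. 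Finally, in part (ii) you invoke that $D(\soc(\gg))\subset\soc(\gg)$ for \emph{odd} derivations, which the paper itself warns is usually deduced from part (ii); to avoid circularity, note that any minimal ideal $I$ of a semisimple $\gg$ satisfies $[I,I]=I$, whence $D(I)=D([I,I])\subset[\gg,I]\subset I$ for every derivation $D$.
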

The socle of a semisimple Lie superalgebra $\gg$ is a {\it characteristic ideal}, i.e. it is stable under all automorphisms and derivations of $\gg$ (it can be proved directly for the automorphisms and the even derivations, one needs point (ii) of Theorem \ref{block} for the odd derivations). This leads to the following useful observations.
\begin{remark}\label{external0}
The $\gs_1,\ldots,\gs_N$ and $n_1,\ldots,n_N$ of Theorem \ref{block} are uniquely determined by $\gg$. 
Indeed, since each $\gs_i\otimes\Lambda(n_i)$ is a minimal ideal, if
$$\soc(\gg)=\bigoplus_{i=1}^{N}\gs_{i}\otimes\Lambda(n_i)=\bigoplus_{i=1}^{M}\gs'_i\otimes\Lambda(m_i)$$
then $N=M$ and
$
\gs_{i}\otimes\Lambda(n_i)=\gs'_i\otimes\Lambda(m_i)
$
up to a reordering of the indices. The claim follows from the fact that 
the radical of $\gs_i\otimes\Lambda(n_i)$ is $\gs_i\otimes\Lambda^{+}(n_i)$ and 
$\gs_i=\gs_i\otimes\Lambda(n_i)/\gs_i\otimes\Lambda^{+}(n_i)$.
\end{remark}
\begin{remark}\label{external}
Any isomorphism $\phi:\gg\to\gg'$ of semisimple Lie superalgebras is the restriction $\phi=\varphi|_{\gg}$ of an automorphism $\varphi$ of $\gs_{\max}$ with $\varphi(\gg)=\gg'$. Indeed $\soc(\gg)=\soc(\gg')=\gs^\L$ and $\phi(\gs^\L)=\gs^\L$ by Remark \ref{external0}; the claim follows then from the fact that $\phi$ is completely determined by the action on $\gs^\L$ and this action, in turn, always extends to a unique automorphism of $\gs_{\max}=\oder(\gs^{\L})$.
\end{remark}
We end this section with the following.
\begin{proposition}\label{graduato0}
Let $(L,L_0)$ be a nonlinear primitive Lie superalgebra. Then
$L$ is semisimple with socle of the form $\gs^{\L}=\gs\otimes \Lambda(n)$ for some unique simple Lie superalgebra $\gs$ and nonnegative integer $n$.
In particular  $L$ is  an admissible intermediate subalgebra  of  the  Lie superalgebra
$\gs_{\max}=\mathrm{der}(\mathfrak{s})\otimes \Lambda\niplus 1 \otimes W(n)$.
\end{proposition}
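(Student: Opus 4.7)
The plan is to assemble the proposition from the three ingredients already available in this section, namely Corollary \ref{iniziobis}, Theorem \ref{block} (Kac's description of semisimple Lie superalgebras), and part (vii) of Lemma \ref{inizio}. The structure theorem provides the form of an arbitrary semisimple Lie superalgebra as a sum of $N$ minimal ideals of the type $\gs_i\otimes\L(n_i)$; the primitivity hypothesis will force $N=1$.

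First I would invoke Corollary \ref{iniziobis} to conclude that $L$ is semisimple. Then, by Theorem \ref{block}, $L$ is an admissible intermediate subalgebra
$$
\gs^{\L}\subset L\subset \gs_{\max}=\oder(\gs^{\L})
$$
for some simple Lie superalgebras $\gs_1,\ldots,\gs_N$ and nonnegative integers $n_1,\ldots,n_N$, with socle
$$
\soc(L)=\bigoplus_{i=1}^{N}\gs_i\otimes \Lambda(n_i)\,.
$$
Uniqueness of the $\gs_i$ and $n_i$ (up to reordering) is already recorded in Remark \ref{external0}.

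The main step is then to show $N=1$. Each summand $J_i=\gs_i\otimes\L(n_i)$ is a nonzero minimal ideal of $L$ and, as they sit inside $\soc(L)$, any two distinct such ideals satisfy $J_i\cap J_j=(0)$ and $[J_i,J_j]=(0)$. If $N\geq 2$, then $J_1$ and $J_2$ would be two nonzero ideals of $L$ in direct sum, contradicting Lemma \ref{inizio}(vii). Hence $N=1$, so $\soc(L)=\gs\otimes \L(n)$ for a unique simple Lie superalgebra $\gs$ and a unique $n\geq 0$. The last statement — that $L$ is an admissible intermediate subalgebra of $\gs_{\max}=\oder(\gs)\otimes\L(n)\niplus \mathbf{1}\otimes W(n)$ — is then precisely the content of Theorem \ref{block} applied to this reduced situation.

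I do not expect any real obstacle: the semisimplicity of $L$ is already isolated in Corollary \ref{iniziobis}, and the only nontrivial observation is that ``nonzero ideals of $L$ are never in direct sum'' (Lemma \ref{inizio}(vii)), which pins the number of simple socle-factors to one. The proof is therefore essentially a bookkeeping step that repackages the previously proved general facts under the additional hypothesis of nonlinear primitivity.
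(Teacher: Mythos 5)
Your proposal is correct and follows essentially the same route as the paper's own proof: semisimplicity from Corollary \ref{iniziobis}, the intermediate admissible structure from Theorem \ref{block}, the reduction to $N=1$ via Lemma \ref{inizio}(vii) applied to two minimal socle summands, and uniqueness via Remark \ref{external0}. No gaps.
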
 
\begin{proof}
Semisimplicity is direct from Corollary \ref{iniziobis}. From Theorem \ref{block}, $L$ is an intermediate admissible subalgebra and we assume $N\geq 2$, by contradiction. Then $s_1\otimes\Lambda(n_1)$ and $\gs_2\otimes\Lambda(n_2)$ are two ideals of $L$ in direct sum, an absurd by Lemma \ref{inizio}. Hence $\gs^{\L}=\gs\otimes\Lambda(n)$ and unicity follows from Remark \ref{external0}.
\end{proof}
\medskip\par
\section{Filtrations and $\bZ$-gradings of Lie superalgebras}\label{sec:3}
From now on we will restrict to \emph{irreducible} Lie superalgebras.
\vskip0.3cm\par\noindent
\subsection{Preliminary results}\label{sec:3.1}\hfill\vskip0.3cm\par\noindent
Let $(L,L_0)$ be an {\it irreducible} transitive Lie superalgebra, i.e. the representation of $L_0$ on $L/L_0$ is irreducible. Following 
\cite{Weis} we set $L_{-p}=L$ for every $p\geq 1$ and define subspaces $L_p$ of $L$ inductively:
$$
L_p=\left\{x\in L_{p-1}\!\!\!\phantom{{l^{l}}^{l}}|\,[x,L]\subset L_{p-1}\right\}\ .
$$ 
The collection $\{L_p\}_{p\geq -1}$ is a {\it filtration} of $L$, 
a chain of linear subspaces 
$$L_{p}\subset L\,\,\,,\,\,\,\,\,\,L_{p}=L_{p}\cap L_{\bar 0}\oplus L_{p}\cap L_{\bar 1}\,\,\,,$$ 
which satisfies
\[\begin{gathered}
L=\bigcup_{p}L_{p},\\
0=\bigcap_{p}L_{p},\\
\end{gathered}
\]
and, for all $p,q\in\mathbb Z$,
\[\begin{gathered}
L_{p}\supset L_{p+1},\\
[L_{p},L_{q}]\subset L_{p+q}.\\
\end{gathered}
\]
The \emph{associated $\mathbb Z$-graded Lie superalgebra} $\mathfrak g=\operatorname{gr}(L)$ is defined by
\begin{equation}
\label{eq:assocgraded}
\begin{aligned}
\gg&=\bigoplus_{p\in\bZ}\mathfrak \gg^p\,,\,\,\,\,
\mathfrak \gg^p=L_{p}/L_{p+1}\,,
\end{aligned}
\end{equation}
with the induced Lie bracket and parity decomposition. 

Recall that a $\bZ$-graded Lie superalgebra $\gg=\bigoplus\gg^p$ is called of {\it depth $d(\gg)=d$} if $\gg^{p}=(0)$ for all $p<-d$ and $\gg^{-d}\neq (0)$. The Lie superalgebra \eqref{eq:assocgraded} has the following basic properties:
\begin{itemize}
\item[(a)] it is {\it transitive} i.e. if $x\in \gg^p$, $p\geq 0$, satisfies $[x, \gg^{-1}]=(0)$ then
$x=0$;
\item[(b)] it is of {\it depth} $d=1$;
\item[(c)] it is {\it irreducible} i.e. the adjoint representation of $\gg^0$ on $\gg^{-1}$ is irreducible.
\end{itemize}
A $\bZ$-graded Lie superalgebra $\gg=\bigoplus\gg^p$ satisfying (a)-(c) is called a 
{\it transitive irreducible $\bZ$-graded Lie superalgebra of depth $1$}.
Note that the corresponding zero-degree part is an irreducible linear Lie superalgebra $\gg^0\subset \ggl(\gg^{-1})$. 

From (i) of Lemma \ref{inizio} the Lie superalgebra $\gg$ associated with a \emph{nonlinear} $(L,L_0)$ has also the additional property
\vskip0.25cm
\begin{itemize}
\item[(d)] it is {\it nonlinear} i.e. $\gg^1\neq (0)$
\end{itemize}
\vskip0.15cm
and this fact motivates the following.
\begin{definition}\label{def1}
A $\bZ$-graded Lie superalgebra $\gg=\bigoplus\gg^p$ satisfying (a)-(d) is called a {\it  transitive nonlinear irreducible $\bZ$-graded Lie superalgebra of depth $1$}.
\end{definition}
There exists a kind of inverse construction $\gg\mapsto (L,L_0)$.
\begin{proposition}\label{graduato}
If $\gg=\bigoplus\gg^p$ is a transitive nonlinear irreducible $\bZ$-graded Lie superalgebra of depth $1$ 
then $\gg$ is semisimple with socle $\gs^{\L}=\gs\otimes \Lambda(n)$ for some unique
simple Lie superalgebra $\gs$ and nonnegative integer $n$. Moreover
\beq
\label{converse}
\gg=\gg_{-1}\supset\gg_{0}\supset\gg_{1}\supset\cdots\supset\gg_{p}\supset\cdots\;,\qquad\gg_{p}=\bigoplus_{i\geq p}\gg^i\;,
\eeq
is the natural filtration on $\gg$ associated with the nonlinear irreducible transitive Lie superalgebra $(L,L_0)=(\gg,\gg_0=\bigoplus_{i\geq 0}\gg^i)$.
\end{proposition}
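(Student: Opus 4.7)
The plan is to reduce the proposition to Proposition \ref{graduato0} by verifying that the pair $(L,L_0)=(\gg,\gg_0)$ is itself a nonlinear primitive Lie superalgebra. Once this is established, semisimplicity of $\gg$, uniqueness of $\gs$ and $n$ and the structure of the socle will follow at once from Proposition \ref{graduato0}, and the identification of the filtration \eqref{converse} with the Weisfeiler filtration of $(\gg,\gg_0)$ will reduce to an induction based on the transitivity property (a).

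First I would verify that $\gg_0$ is a maximal subalgebra of $\gg$ which contains no nonzero ideal of $\gg$. Maximality is immediate from irreducibility (c), since any proper subalgebra strictly containing $\gg_0$ would project to a proper nonzero $\gg^0$-invariant subspace of $\gg/\gg_0\simeq\gg^{-1}$. For the absence of ideals, I would assume for contradiction that $\mathfrak I\subseteq\gg_0$ is a nonzero ideal of $\gg$ and pick $x\in\mathfrak I$ with lowest nonzero graded component $x_p\in\gg^p$, $p\geq 0$. Iterated bracketing with elements of $\gg^{-1}$ produces an element of $\mathfrak I$ whose lowest-degree piece is of the form $[\cdots[x_p,y_1],\cdots,y_k]$ with $y_j\in\gg^{-1}$; by transitivity (a), one can choose the $y_j$ at each step so that this piece remains nonzero, so after $p+1$ iterations one obtains a nonzero element of $\mathfrak I$ with nonzero component in $\gg^{-1}$, contradicting $\mathfrak I\subseteq\gg_0$. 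This is the delicate point of the argument, since $\mathfrak I$ need not be graded and one must apply transitivity to the output of each iteration.

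Next I would identify the kernel $L_1=\{x\in\gg_0\mid[x,\gg]\subseteq\gg_0\}$ of the isotropy representation with $\gg_1$. The inclusion $\gg_1\subseteq L_1$ is immediate from the grading, and conversely if $x=x_0+x_1+\cdots\in L_1$ then the $\gg^{-1}$-component of $[x,\gg^{-1}]$ equals $[x_0,\gg^{-1}]$, which must vanish; transitivity (a) then gives $x_0=0$, hence $x\in\gg_1$. By property (d) we conclude $L_1=\gg_1\neq(0)$, so $(\gg,\gg_0)$ is nonlinear primitive and Proposition \ref{graduato0} applies, yielding that $\gg$ is semisimple with $\soc(\gg)=\gs\otimes\Lambda(n)$ for unique $\gs$ and $n$. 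The equalities $L_p=\gg_p$ for every $p\geq 1$ then follow by a completely parallel induction: assuming $L_{p-1}=\gg_{p-1}$, any $x=x_{p-1}+x_p+\cdots\in L_p$ satisfies $[x_{p-1},\gg^{-1}]=0$ by inspecting the $\gg^{p-2}$-component of $[x,\gg^{-1}]\subseteq\gg_{p-1}$, whence $x_{p-1}=0$ and $x\in\gg_p$, while the reverse inclusion $\gg_p\subseteq L_p$ is immediate from the definition of the grading.
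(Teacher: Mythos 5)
Your proposal is correct and follows exactly the route of the paper, which simply states that ``by a direct check, the pair $(\gg,\gg_0)$ is a nonlinear irreducible transitive Lie superalgebra with associated filtration \eqref{converse}'' and then invokes Proposition \ref{graduato0}; you have supplied that direct check in full (maximality from irreducibility, absence of ideals in $\gg_0$ via iterated bracketing with $\gg^{-1}$ and transitivity, and the identification $L_p=\gg_p$), and each step is sound.
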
 
\begin{proof}
By a direct check, the pair $(\gg,\gg_0)$ is a nonlinear irreducible transitive Lie superalgebra with associated filtration \eqref{converse} and the rest follows from Proposition \ref{graduato0}. 
\end{proof}
\medskip\par
By Proposition \ref{graduato} the first natural class of nonlinear irreducible transitive Lie superalgebras to be considered is given by the filtered Lie superalgebras directly associated with the $\bZ$-graded Lie superalgebras of Definition \ref{def1}; the main aim of the paper is the study of
these latter. 

We now need to recall the description of $\bZ$-gradings of superalgebras.
We deal only with the facts that we need and refer to e.g. \cite{IO} for more
details.
\vskip0.3cm\par\noindent
\subsection{Gradings of a superalgebra}\label{sec:3.2}\hfill\vskip0.3cm\par\noindent
Let $A$ be a (purely even) algebra. A {\it $\bZ$-grading of $A$} (shortly a {\it grading}) is a direct sum decomposition of $A$ in linear subspaces
\beq
\label{gradingalg}
A=\bigoplus_{p\in\bZ} A^{p}\;\qquad\text{with}\;\;\; A^p \cdot A^q
\subset A^{p+q}\;.
\eeq
The grading is called of {\it depth $d$} if  $A^{-d} \neq (0)$ and $A^{-p}=(0)$ for all $p>d$.
In other words, a grading is the eigenspace decomposition for a semisimple derivation $D\in\oder(A)$, $D|_{A^p}=p\Id$ with integer eigenvalues.
The operator $D$ is called the {\it grading operator}. If $A=A_{\0} \oplus A_{\1}$ is a superalgebra, we assume that
the grading is of the form $A^p=A^p_{\0}\oplus A^p_{\1}$ i.e. $D\in\oder_{\0}(A)$ is even.
\smallskip\par
Let $\Aut(A)$ be the Lie group of all automorphisms of $A$. The description of $\bZ$-gradings of $A$ up to automorphisms corresponds to the description of the grading operators up to conjugacy by $\Aut(A)$.
\begin{definition}
A subalgebra $\gt$ of $\oder_{\0}(A)$  is called {\it toric} if it is abelian and 
any $D\in\gt$ is semisimple. 
A maximal toric subalgebra
$\gt\subset \oder_{\0}(A)$ is called a {\it maximal torus of $A$}. 
\end{definition}
Since $\oder_{\0}(A)$ is algebraic (it is the tangent Lie algebra of $\Aut(A)$, an algebraic subgroup of the group $\GL_{\0}(A)$ of all even invertible transformations), then all maximal tori of $A$ are conjugate by $\Aut^{\circ}(A)$
(\cite{OV}). 

It follows that the description of $\bZ$-gradings of a superalgebra $A$ up to
automorphisms reduces to find a maximal torus of $A$ and
its grading operators. 
\vskip0.3cm\par\noindent
\subsection{Gradings of $\L=\L(V^*)$ and $W=\oder(\L)$}\label{sec:3.3}\hfill\vskip0.3cm\par\noindent\label{gradW}
 Let $V=\bC^n$. We describe all the gradings of the Grassmann algebra $\L=\L(U)$, $U=V^*$ and the Lie superalgebra $W=\oder(\L)=\Lambda\otimes \partial_V\simeq\L\otimes V$ of supervector fields, where  $\partial_V \simeq V$ is  the  space of  constant supervector  fields. 
Let $U=U^{k_1}\oplus\cdots\oplus U^{k_m}$ be  a  grading  of  $U$, $\dim U^{k_i}=n_i$, defined by  the grading  operator
$$
D=D_U=k_1\Id_{U^{k_1}}+\cdots+k_m\Id_{U^{k_m}}
$$
(in this case just a semisimple linear endomorphism with integer eigenvalues). 
\begin{definition}
The spectrum $\vec{k}:=(k_1,\ldots,k_m)$ of $D$ is called the {\it type} of the grading.
\end{definition}
We always assume that $n_i\neq 0$ and $k_1<\cdots<k_m$. The operator  $D$ naturally  acts   as a grading operator on  $V$,  $\Lambda$  and $W$
and  defines   the  grading
$$V =  V^{-k_1} \oplus \cdots \oplus V^{-k_m }\;,\qquad V^{-k_i} = (U^{k_i})^*\;, $$
of $V$ and the gradings $\L=\bigoplus\L^p$ and $W=\bigoplus W^p$ of $\L$ and, respectively, $W$ given by 
\beq
\begin{split}
\L^p&=\!\!\!\bigoplus_{p_1k_1+\cdots+p_mk_m=p}\!\!\!\L^{p_1}(U^{k_1})\cdot\L^{p_2}(U^{k_2})\cdots \L^{p_m}(U^{k_m})\;,\\
W^q&=\!\!\!\bigoplus_{p_i-k_i=q}\L^{p_i}\partial_{V^{-k_i}}\;.\\
\end{split}
\eeq
We denote the corresponding depths by $d(\L)$ and $d(W)$.
By a little abuse of notation, we  also say  that  these gradings are of \emph{type $\vec k$}.
The  grading of type $\vec k =(1)$, defined    by   the identity  endomorphism $D =\Id$,  is usually called  the {\it principal grading};
in this case the degree $\deg a$ of  $a \in \L$  coincides  with  the  degree of $a$ as an exterior form. 
\begin{proposition}\label{torostand00} The space $\gt_{\rm W}$ of diagonal matrices in $\ggl(V)\subset W$
is a maximal torus of $W$ and all maximal tori are conjugated by $\Aut(\Lambda)$. Any grading of  $\Lambda$   and $W$  is  conjugated by $\Aut(\Lambda)$ to  a  grading  of   type  $\vec{k}$.
\end{proposition}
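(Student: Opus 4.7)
The plan is to proceed in three steps: verify that $\gt_{\rm W}$ is toric, prove its maximality by a weight-space calculation, and finally reduce an arbitrary grading to a standard one by the conjugacy of maximal tori.

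First I would verify that $\gt_{\rm W}$ is toric. Abelianness is immediate since its elements are diagonal matrices in $\ggl(V)$, and each generator $E_i=\xi_i\partial_i$ acts on the monomial basis of $\Lambda=\Lambda(V^*)$ with integer eigenvalue equal to $1$ if $i$ appears in the monomial and $0$ otherwise. Consequently every $D\in\gt_{\rm W}$ is semisimple on $\Lambda$, and via the adjoint action also on $W=\oder(\Lambda)$, so $\gt_{\rm W}\subset \oder_{\bar 0}(\Lambda)=W_{\bar 0}$ is toric.

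For maximality I would compute the weight decomposition of $W=\Lambda\otimes\partial_V$ under the adjoint action of $\gt_{\rm W}$: with respect to the basis of $\gt_{\rm W}^*$ dual to $E_1,\ldots,E_n$, the monomial vector field $\xi_{i_1}\cdots\xi_{i_r}\partial_j$ has weight $e_{i_1}+\cdots+e_{i_r}-e_j$, and this weight vanishes precisely when $\{i_1,\ldots,i_r\}=\{j\}$, that is when the monomial is one of the $\xi_j\partial_j=E_j$. Thus the centralizer of $\gt_{\rm W}$ in $W$ is $\gt_{\rm W}$ itself, and any toric subalgebra of $\oder_{\bar 0}(\Lambda)$ containing $\gt_{\rm W}$ must be contained in this centralizer, hence coincides with $\gt_{\rm W}$.

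Conjugacy of maximal tori under $\Aut^{\circ}(\Lambda)$ is then the general fact for algebraic Lie algebras recalled in \S\ref{sec:3.2}, applied to the algebraic group $\Aut(\Lambda)$ whose tangent algebra is $\oder_{\bar 0}(\Lambda)=W_{\bar 0}$; the statement transfers at once to gradings of $W$ since $\Aut(\Lambda)$ acts on $W=\oder(\Lambda)$ by conjugation. To finish, given any grading with grading operator $D\in\oder_{\bar 0}$, the line $\bC D$ is toric and is contained in some maximal torus $\gt$, and by conjugacy there is $\varphi\in\Aut(\Lambda)$ with $\varphi\cdot\gt\cdot\varphi^{-1}=\gt_{\rm W}$. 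After conjugation $D\in\gt_{\rm W}$ acts diagonally on $U=V^*$ with integer eigenvalues $k_1<\cdots<k_m$, giving the decomposition $U=\bigoplus U^{k_i}$ of type $\vec k$; the induced gradings of $\Lambda$ and $W$ are then exactly those displayed in \S\ref{sec:3.3}. The only non-routine ingredient is the weight-space calculation establishing maximality; the rest consists in assembling the explicit description of $\gt_{\rm W}$ with the cited result on conjugacy of maximal tori in an algebraic Lie algebra.
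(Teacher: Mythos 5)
Your argument is correct in substance but takes a different route from the paper at the key step. Where you establish maximality of $\gt_{\rm W}$ by a direct weight computation --- the monomial fields $\xi^{i_1}\cdots\xi^{i_r}\partial_{\xi^j}$ are $\ad(\gt_{\rm W})$-weight vectors, the zero-weight space is spanned by the $\xi^j\partial_{\xi^j}$, hence the centralizer of $\gt_{\rm W}$ in $W$ is $\gt_{\rm W}$ itself and any toric subalgebra containing it (being abelian) is trapped there --- the paper instead first records that $Z(W)=0$ and $\oder(W)=W$ (a direct check for $n=1$; simplicity of $W(n)$ together with \cite[Prop.\ 5.1.2]{K0} for $n\geq 2$) and then invokes \cite[Lemma 1]{K1} applied to the principal grading of $W$. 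Your computation is more elementary and self-contained, and it is sound. The remaining steps (toricity of $\gt_{\rm W}$, conjugacy of maximal tori via algebraicity of $\Aut(\Lambda)$, embedding $\bC D$ in a maximal torus and reading off the integral spectrum on $U$) coincide with the paper's.

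There is one point you pass over too quickly. The proposition concerns gradings of $W$ as well as of $\Lambda$, i.e.\ grading operators in $\oder_{\0}(W)$, not only in $\oder_{\0}(\Lambda)=W_{\0}$. Your remark that ``the statement transfers at once to gradings of $W$ since $\Aut(\Lambda)$ acts on $W$'' only covers gradings of $W$ that are induced from $\Lambda$. To know that every grading of $W$ arises this way --- equivalently, that your maximal toric subalgebra of $W_{\0}$ is in fact maximal in $\oder_{\0}(W)$ --- one needs $\oder_{\0}(W)=W_{\0}$, which is precisely why the paper opens its proof with $Z(W)=0$ and $\oder(W)=W$. This is a one-line supplement, but it is needed for the statement as written.
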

\begin{proof}
First note that the center $Z(W)$ of $W$ is trivial and that $\oder(W)=W$ (one directly checks the statement if $n=1$ whereas, for $n\geq 2$, $W(n)$ is simple and \cite[Prop. 5.1.2]{K0} applies). The space $\gt_{\rm W}$ is a subspace of $\oder_{\0}(\L)$; it is a toric subalgebra, since
the action of any of its elements on $\L$ is semisimple. One can then apply e.g. \cite[Lemma 1]{K1} to the principal grading of $W$ and get that $\gt_W$ 
is a maximal torus of $W$ (hence of $\L$ too, since $\oder_{\0}(\L)=\oder_{\0}(W)$). 
All maximal tori are conjugated, by the discussion at the end of Section 3.2.

The last claim follows from the fact that the collection of the supervector fields $H_\alpha= \xi^\a\partial_{\xi^\a}$, $\alpha=1,\ldots,n$, is a basis of $\gt_{\rm W}$ and the grading operators in $\gt_{\rm W}$ are the linear combinations of the $H_\a$'s with integer coefficients.
%
\end{proof}
Since $\deg 1=0$, the depth $d(\L)\geq 0$ and is given by
$$
d(\L) = \begin{cases} 0\;\;\text{if}\;\; k_1\geq 0,\\
                       \sum_{k_i <0}\, \, | k_i| n_i\;\;\text{if}\;\;  k_1 <0.
\end{cases}
$$
The  depth  $d(W) = k_m + d(\L)\geq  k_m $. This implies the following.
\begin{lem}
\label{gradL1}
\label{gradL0}
\label{lateron}
i) A grading  of  $\Lambda$  of type  $\vec{k}$ has depth $d(\L) =1$ if and only if $\vec k = (-1,k_2, \ldots, k_m)$
and  $\dim U^{-1} =1$, i.e. $U^{-1} = \mathbb{C}\xi$.
In this case  $d(W) = k_m +1$.
\\
ii) A grading of $\L$ of type $\vec k$ has depth $d(\L)=0$ if and only if $k_1 \geq 0$. 
Moreover $d(W)=0$ if and only if $\vec k =(0)$ (trivial grading)
whereas  $d(W) =1$ if and only if $\vec k = (0,1)$ or $\vec k=(1)$.
\end{lem}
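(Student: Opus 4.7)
The plan is to deduce both statements directly from the two formulas displayed immediately before the lemma, namely
\[ d(\L)=\begin{cases}0 &\text{if }k_1\geq 0,\\ \sum_{k_i<0}|k_i|n_i &\text{if }k_1<0,\end{cases}\qquad d(W)=d(\L)+k_m,\]
by inspecting the combinatorial constraints these impose on the strictly increasing sequence $\vec k=(k_1,\ldots,k_m)$ with dimensions $n_i=\dim U^{k_i}\geq 1$.

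For part (i) I would start from $d(\L)=1>0$, which forces $k_1<0$, so the formula reads $\sum_{k_i<0}|k_i|n_i=1$. Every summand with $k_i<0$ is a positive integer, at least $1\cdot 1=1$, so the sum equals $1$ precisely when there is a single negative $k_i$ satisfying $|k_i|=1$ and $n_i=1$. Since $k_1$ is the smallest index, this negative term must be $k_1=-1$ with $\dim U^{-1}=n_1=1$, while the remaining $k_j$ for $j\geq 2$ are nonnegative. Conversely, any such $\vec k$ clearly yields $d(\L)=1$. The identity $d(W)=k_m+1$ is then immediate from $d(W)=d(\L)+k_m$.

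For part (ii), the equivalence $d(\L)=0\Leftrightarrow k_1\geq 0$ is the first line of the formula for $d(\L)$. Under this assumption one has $d(W)=k_m$, so $d(W)=0$ amounts to $k_m=0$; combined with the strict chain $k_1<\cdots<k_m$ and $k_1\geq 0$ this forces $m=1$ and $\vec k=(0)$. Analogously, $d(W)=1$ amounts to $k_m=1$, and the only admissible chains satisfying $k_1\geq 0$ are $\vec k=(1)$ (when $m=1$) and $\vec k=(0,1)$ (when $m=2$).

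There is essentially no substantial obstacle: once the depth formulas are in hand, the entire argument reduces to a finite enumeration. The only point requiring care is the trivial observation that each summand $|k_i|n_i$ with $k_i<0$ is a positive integer, which makes the Diophantine conditions $d(\L)=1$ and $d(W)\in\{0,1\}$ immediately solvable.
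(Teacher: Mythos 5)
Your proposal is correct and matches the paper's own treatment exactly: the paper offers no separate proof, presenting the lemma as an immediate consequence of the displayed formulas for $d(\L)$ and $d(W)=k_m+d(\L)$, which is precisely the finite enumeration you carry out. One remark: you (correctly) read the ``moreover'' clause of (ii) as conditional on $k_1\geq 0$, and this must indeed be the intended reading, since otherwise $\vec k=(-1,0)$ with $\dim U^{-1}=1$ (the very grading used in case (II) of Proposition \ref{firsthm}) would contradict the claim that $d(W)=1$ forces $\vec k=(0,1)$ or $\vec k=(1)$.
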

\vskip0.3cm\par\noindent
\subsection{Gradings of a semisimple Lie superalgebra with socle $\gs^\L=\gs\otimes\L$}\label{sec:3.4}\hfill\vskip0.3cm
The following result reduces the description of graded semisimple Lie superalgebras
$\gg$ with socle $\gs^\L=\gs\otimes\L$, where $\gs$ is a simple Lie superalgebra, to the description of graded admissible subalgebras
$\gs^\L\subset\gg=\gs_{\max}$
of the Lie superalgebra $\gs_{\max}=\oder(\gs)\otimes\L\niplus\textbf{1}\otimes W$ with a fixed grading.
To state the result we note that any grading of $\gs$ induces   a natural grading on its algebra $\oder(\gs)$ of derivations.
\begin{proposition}\label{grad}\label{torostand}
Any grading of a semisimple Lie superalgebra $\gg$ with socle $\gs\otimes\L$ is, up to automorphisms in $\Aut(\gs_{\max})$, induced by the grading of $\gs_{\max}$
$$
\gs_{\max}=\bigoplus\gs^p_{\max}\;,\qquad\gs^p_{\max}=\bigoplus_{i+j=p} (\mathrm{der}^i(\gs)\otimes \L^j)\oplus W^p\;,
$$
generated by a grading of $\gs$ and the gradings of $\L$ and $W$ associated with a given grading of $U$of type $\vec k=(k_1,\ldots,k_m)$.
\end{proposition}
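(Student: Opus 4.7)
The plan is to show that every grading of $\gg$ arises, up to an automorphism of $\gs_{\max}$, from the restriction of a grading of $\gs_{\max}$ that splits as a combination of a grading of $\gs$ and a grading of $U$. The key steps are (i) realizing the grading operator of $\gg$ as the adjoint action of a semisimple even element of $\gs_{\max}$, (ii) identifying a standard maximal torus of $\gs_{\max,\bar 0}$, and (iii) using conjugacy of maximal tori under $\Aut^{\circ}(\gs_{\max})$ to move the grading operator into the standard torus.

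First, I will show that any grading operator $D \in \oder_{\bar 0}(\gg)$ is induced by the adjoint action of a semisimple even element of $\gs_{\max}$. Since $\oder(\gg)=N_{\gs_{\max}}(\gg)$ by Theorem~\ref{block}(ii), there exists $\wt D \in \gs_{\max,\bar 0}$ with $\ad_{\wt D}|_{\gg} = D$. The Lie algebra $\gs_{\max,\bar 0}$ is algebraic (it is the Lie algebra of the algebraic group $\Aut_{\bar 0}(\gs^{\L})$), so it admits a Jordan-Chevalley decomposition $\wt D = \wt D_s + \wt D_n$ with $\wt D_s$ semisimple and $\wt D_n$ nilpotent. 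The operators $\ad_{\wt D_s}$ and $\ad_{\wt D_n}$ on $\gs_{\max}$ are polynomials in $\ad_{\wt D}$, hence preserve $\gg$ and induce there the Jordan decomposition of $\ad_{\wt D}|_{\gg}=D$. Since $D$ is semisimple, $\ad_{\wt D_s}|_{\gg}=D$, and I may replace $\wt D$ by $\wt D_s$ and assume $\wt D$ is semisimple in $\gs_{\max,\bar 0}$.

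Next, I will exhibit a standard maximal torus
$$\gt_{\rm std}=\gt_{\oder(\gs)} \otimes 1\oplus 1 \otimes \gt_{\rm W}\subset \gs_{\max,\bar 0}\;,$$
where $\gt_{\oder(\gs)}$ is a maximal torus of $\oder(\gs)$ and $\gt_{\rm W}$ is the torus from Proposition~\ref{torostand00}. Its maximality in $\gs_{\max,\bar 0}$ follows by a direct computation: commutation with $1 \otimes \gt_{\rm W}$ forces a general even element of $\gs_{\max}$ to have the form $d \otimes 1 + 1 \otimes v$ with $d \in \oder(\gs)_{\bar 0}$ and $v \in \gt_{\rm W}$ (the centralizer of $\gt_{\rm W}$ in $W$ is $\gt_{\rm W}$ itself and the $\L$-coefficients are forced to be scalars), and then commutation with $\gt_{\oder(\gs)}\otimes 1$ forces $d \in \gt_{\oder(\gs)}$. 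By the conjugacy of maximal tori under $\Aut^{\circ}(\gs_{\max})$ recalled at the end of \S~\ref{sec:3.2}, I may assume that the semisimple element $\wt D$ lies in $\gt_{\rm std}$, so that $\wt D = D_\gs \otimes 1 + 1 \otimes D_W$ with $D_\gs \in \gt_{\oder(\gs)}$ and $D_W \in \gt_{\rm W}$.

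Finally, $D_\gs$ generates a grading of $\oder(\gs)$ coming from a grading of $\gs$, and $D_W$ generates a grading of $U$ of some type $\vec k=(k_1,\ldots,k_m)$ by Proposition~\ref{torostand00}; the induced gradings on $\L$ and $W$ are then precisely those described in \S~\ref{gradW}. The resulting grading of $\gs_{\max}$ has the form stated in the proposition, and its restriction to $\gg$ recovers the original grading. The main delicate point is the first step, where algebraicity of $\gs_{\max,\bar 0}$ must be invoked in order to apply Jordan-Chevalley and force the extension $\wt D$ to be semisimple; once that is in place, the remaining steps reduce to the maximality verification for $\gt_{\rm std}$ together with the conjugacy of tori and the catalogue of gradings of $\L$ and $W$ from Proposition~\ref{torostand00}.
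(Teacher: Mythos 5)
Your proof is correct and follows the same overall architecture as the paper's: realize the grading operator of $\gg$ inside $\gs_{\max}$ via Theorem \ref{block}(ii), exhibit the standard maximal torus $\gt_{\gs}\oplus\gt_{\rm W}$, and conjugate into it. Two of the technical verifications are carried out differently. For the semisimplicity of the extension of $D$ to $\gs_{\max}$, you invoke the Jordan--Chevalley decomposition in the algebraic Lie algebra $(\gs_{\max})_{\0}$ and pass to the semisimple part; the paper instead observes that the socle is a characteristic (hence graded) ideal of $\gg$, so that $D|_{\gs^{\L}}$ is already a grading operator of $\gs^{\L}$ whose canonical extension to $\oder(\gs^{\L})=\gs_{\max}$ is automatically semisimple. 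Your route needs the extra (easy) remark that the eigenvalues of $\ad_{\wt D_s}$ on all of $\gs_{\max}$ are integers; this is immediate once one notes they are differences of the eigenvalues on $\gs^{\L}\subset\gg$, which are integral because $\ad_{\wt D_s}|_{\gg}=D$. For the maximality of the torus, you compute the centralizer of $1\otimes\gt_{\rm W}$ first, which kills the $\L^{+}$-components of an even centralizing element directly and is arguably cleaner than the paper's argument via the induced action on $\gs\otimes\L^{n}$. Both variants are valid and neither changes the structure or the reach of the proof.
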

\begin{proof}
We first prove that $\gt=\gt_{\gs}\oplus\gt_{\rm W}$ is a maximal torus of $\gs_{\max}$, where $\gt_{\gs}$ is a maximal torus of $\gs$ (that is a maximal toric subalgebra of $\oder(\gs)$) and $\gt_{\rm W}$ is the space of diagonal matrices in $W$.

It is clearly toric, as
$[\gt_\gs,\gt_{\rm W}]=(0)$ 
and the action of any $x\in\gt$ is semisimple on $\gs^{\L}$ and hence on $\gs_{\max}=\oder(\gs^\L)$. Now $\gt_{\rm W}$ is a maximal torus of $W$ by Proposition \ref{torostand00}
and $\gt_{\gs}$ is maximal toric in $\oder(\gs)\otimes\Lambda$. Indeed if $x\in \oder(\gs)\otimes\Lambda$ is an even element with $[x,\gt_{\gs}]=(0)$ then $x=x^o+x^+\in\oder(\gs)\oplus\oder(\gs)\otimes\L^+$ with $[x^o,\gt_\gs]=[x^+,\gt_\gs]=(0)$ and its action on $\gs\otimes\L^{n}\simeq \gs$ reduces to the natural action of $x^o$. It follows that if $x$ is semisimple then $x^o$ is semisimple, $x^o\in\gt_{\gs}$ and $x^+=0$.

This yields
that $\gt$ is a maximal torus of $\gs_{\max}$ and that any grading operator $D\in\gt$ of $\gs_{\max}$ decomposes in $D=D_{\gs}+D_{\rm W}$ where $D_{\gs}\in\gt_{\gs}$ (resp. $D_{\rm W}\in\gt_{\rm W}$) is a grading operator of $\gs$ (resp. a grading of type $\vec k=(k_1,\ldots,k_m)$).

There is a natural $1$-$1$ correspondence between the grading operators of $\gg$ and those of $\gs_{\max}$ preserving $\gg$. First of all note that 
the grading operators of $\gs_{\max}$ and those of $\gs^{\L}$ are in a natural $1$-$1$ correspondence,
as  $\oder(\gs_{\max})=\gs_{\max}=\oder(\gs^{\L})$. Finally any grading operator of $\gg$ satisfies $$D\in\oder_{\0}(\gg)=N_{\gs_{\max}}(\gg)_{\0}\subset (\gs_{\max})_{\0}$$ by Theorem \ref{block}
and $D|_{\gs^{\L}}:\gs^\L\to\gs^\L$ is a grading operator of $\gs^{\L}$ which extends
to a unique grading operator of $\gs_{\max}$.
\end{proof}
\vskip0.3cm\par\noindent
\subsection{Gradings of the Lie superalgebra $\oder(\gs)$ with $\gs$ simple}\label{weaklyouter}\hfill\vskip0.3cm\par\noindent
The main aim of this section is to study in detail the case where the socle is a simple Lie superalgebra. In other words we have $n=0$, $V=(0)$, $\L=1$ and $W=(0)$.
An important r$\hat{\text o}$le is played by the gradings of the Lie superalgebra $\oder(\gs)$ of derivations of a simple Lie superalgebra $\gs=\gs_{\bar0}\oplus\gs_{\bar1}$.
\vskip0.3cm\par\noindent
\subsubsection{Preliminary results}\hfill\vskip0.3cm\par\noindent
Finite-dimensional simple Lie superalgebras are classified in \cite{K0} and split into two main families:
{\it classical} superalgebras, for which the adjoint action of $\gs_{\bar 0}$ on $\gs_{\bar1}$ is completely reducible, and
{\it Cartan} superalgebras $W(n)$ (for $n\geq 3$), $S(n)$ (for $n\geq 4$), $\widetilde S(n)$ (for $n\geq 4$ and even), $H(n)$ (for $n\geq 5$), that is finite-dimensional superalgebras analogue to simple Lie algebras of vector fields. 
\begin{remark}
The simple Lie superalgebras $W(2)$, $S(3)$, $\widetilde{S}(2)$ and $H(4)$ are isomorphic to the classical superalgebras $\mathfrak{sl}(1|2)\simeq \mathfrak{osp}(2|2)$, $\mathfrak{spe}(3)$, $\mathfrak{osp}(1|2)$ and $\mathfrak{psl}(2|2)$, respectively. In our conventions, they are not Cartan.
\end{remark}
Classical superalgebras in turn split into the so-called {\it basic} superalgebras $\mathfrak{sl}(m+1|n+1)$ (for $m<n$), $\mathfrak{psl}(n+1|n+1)$ (for $n\geq 1$), $\mathfrak{osp}(2m+1|2n)$ (for $n\geq 1$), $\mathfrak{osp}(2|2n-2)$ (for $n\geq 3$),
$\mathfrak{osp}(2m|2n)$ (for $m\geq 2$, $n\geq 1$), $\mathfrak{osp}(4|2;\alpha)$ (for $\alpha\neq 0, \pm 1, -2,-\frac{1}{2}$), $\mathfrak{ab}(3)$, $\mathfrak{ag}(2)$, 
for which 
there exists a non-degenerate even invariant supersymmetric bilinear form $B:\gs\otimes\gs\rightarrow\bC$, and two {\it strange} families $\mathfrak{spe}(n)$ (for $n\geq 3$) and $\mathfrak{psq}(n)$ (for $n\geq 3$). The form $B$ is unique up to constant and it coincides with the Killing form of $\gs$, except for the cases $\mathfrak{psl}(n+1|n+1)$, $\mathfrak{osp}(2m+2|2m)$ and $\mathfrak{osp}(4|2;\alpha)$. The continuous family $\mathfrak{osp}(4|2;\alpha)$ consists of deformations of $\mathfrak{osp}(4|2)$ and two Lie superalgebras $\mathfrak{osp}(4|2;\alpha)$ and $\mathfrak{osp}(4|2;\alpha')$ are isomorphic if and only if $\alpha$ and $\alpha'$ lie on the same orbit under the action
of the permutation group $\mathfrak{S}_3$ generated by $\alpha\mapsto\alpha^{-1}$ and $\alpha\mapsto(-1-\alpha)$.

Table \ref{Kactab} gives the description of the even Lie subalgebra $\gs_{\overline{0}}$ of $\gs$ and its representation on $\Pi(\gs_{\overline{1}})$ (therein $\mathbb S$ denotes the spin module of $\mathfrak{spin}(7)$). 
\vskip0.2cm
{\small
\begin{table}[H]
\begin{centering}
\makebox[\textwidth]{%
\begin{tabular}{|c|c|c|}
\hline
$\gs$ & $\gs_{\overline{0}}$ & $\Pi(\gs_{\overline{1}})$\\
\hline
\hline
$\begin{gathered}\mathfrak{sl}(m+1|n+1)^{\phantom{T}}\\ m<n\end{gathered}$ & $\mathfrak{sl}(m+1)\oplus\mathfrak{sl}(n+1)\oplus Z$ & $\bC^{m+1}\otimes(\bC^{n+1})^{*}\oplus(\bC^{m+1})^{*}\otimes\bC^{n+1}$ \\
\hline
$\begin{gathered}\mathfrak{psl}(n+1|n+1)^{\phantom{T}}\\ n\geq 1 \end{gathered}$ & $\mathfrak{sl}(n+1)\oplus\mathfrak{sl}(n+1)$ & $\bC^{n+1}\otimes(\bC^{n+1})^{*}\oplus(\bC^{n+1})^{*}\otimes\bC^{n+1}$\\
\hline
$\begin{gathered}\mathfrak{osp}(2m+1|2n)^{\phantom{T}}\\ n\geq 1 \end{gathered}$ & $\mathfrak{so}(2m+1)\oplus\mathfrak{sp}(2n)$ & $\bC^{2m+1}\otimes\bC^{2n}$\\
\hline
$\begin{gathered}\mathfrak{osp}(2|2n-2)^{\phantom{T}}\\ n\geq 3 \end{gathered}$ & $\mathfrak{so}(2)\oplus\mathfrak{sp}(2n-2)$ & $\bC^{2}\otimes\bC^{2n-2}$\\
\hline
$\begin{gathered}\mathfrak{osp}(2m|2n)^{\phantom{T}}\\ m\geq 2, n\geq 1 \end{gathered}$ & $\mathfrak{so}(2m)\oplus\mathfrak{sp}(2n)$ & $\bC^{2m}\otimes\bC^{2n}$\\
\hline
$\begin{gathered}\mathfrak{osp}(4|2;\alpha)^{\phantom{T}}\\ \alpha\neq 0,\pm 1,-2,-1/2 \end{gathered}$ & $\mathfrak{sl}(2)\oplus\mathfrak{sl}(2)\oplus\mathfrak{sl}(2)$ & $\bC^{2}\otimes\bC^{2}\otimes\bC^2$\\
\hline
$\mathfrak{ab}(3)^{\phantom{T}}$ & $\mathfrak{spin}(7)\oplus\mathfrak{sl}(2)$ & $\mathbb{S}\otimes\bC^{2^{\phantom{T}}}$ \\
\hline
$\mathfrak{ag}(2)^{\phantom{T}}$ & $\mathrm{G}_2\oplus \mathfrak{sl}(2)$ & $\mathbb{C}^{7^{\phantom{T}}}\otimes\bC^2$\\
\hline 
$\begin{gathered}\mathfrak{spe}(n)^{\phantom{T}}\\ n\geq 3\end{gathered}$ & $\sl(n)$ & $S^2(\mathbb{C}^{n})\oplus \L^2((\mathbb{C}^{n})^*)$\\
\hline 
$\begin{gathered}\mathfrak{psq}(n)^{\phantom{T}}\\ n\geq 3\end{gathered}$ & $\sl(n)$ & $\ad(\sl(n))$\\
\hline 
\end{tabular}}
\end{centering}
\caption[]{\label{Kactab} Classical superalgebras $\gs=\gs_{\overline 0}\oplus \gs_{\overline 1}$.} \vskip14pt
\end{table}}
\vskip-0.4cm\par
A direct inspection of \cite[Prop. 5.1.2]{K0} and its proof implies the following. 
\begin{proposition}
\label{utile}
Let $\gs$ be a simple Lie superalgebra. Then $\oder(\gs)$ admits a semidirect decomposition $\oder(\gs)=\gs\niplus\out(\gs)$ for a
subalgebra $\out(\gs)$ of outer derivations and a maximal torus of $\gs$ (that is a maximal toric subalgebra of $\oder(\gs)$) is conjugated to a maximal torus of the form $\gt_\gs=\gh\oplus\gt_o$ for
a maximal toric subalgebra $\gh$ in $\gs_{\0}$ and a maximal toric subalgebra $\gt_o$ in $\out_{\0}(\gs)$. 
In particular there is a natural $1$-$1$ correspondence between gradings of $\gs$, gradings of $\oder(\gs)$ and grading operators $D\in\gt_{\gs}$. 
Moreover $[\gh,\out(\gs)]=(0)$ in all cases and $\gt_o$ has
dimension one if $\gs=\mathfrak{psl}(n+1|n+1)$, $\mathfrak{spe}(n)$, $S(n)$, $H(n)$ and it is trivial for all other simple Lie superalgebras. 
\end{proposition}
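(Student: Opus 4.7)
The plan is to argue case by case, using the classification of finite-dimensional simple Lie superalgebras and Kac's explicit description of $\oder(\gs)$ in \cite[Prop.~5.1.2]{K0}. The semidirect decomposition $\oder(\gs)=\gs\niplus\out(\gs)$ can be read off directly from that description: for most simple Lie superalgebras one has $\out(\gs)=0$, and the only cases with nontrivial outer derivations are $\mathfrak{psl}(n+1|n+1)$, $\mathfrak{spe}(n)$, $\mathfrak{psq}(n)$ and the Cartan types $S(n)$, $\widetilde S(n)$, $H(n)$. The first step is thus to record the explicit form and parity decomposition of $\out(\gs)$ in each of these cases.

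To build the maximal torus I would fix a Cartan subalgebra $\gh\subset\gs_\0$: for classical $\gs$ this is the standard Cartan of the reductive algebra $\gs_\0$, which acts semisimply on $\gs_\1$ by complete reducibility; for Cartan-type $\gs$ one takes the standard torus of diagonal supervector fields. In either case $\gh$ is a toric subalgebra of $\oder(\gs)$. To enlarge it to a maximal torus it suffices to adjoin a maximal toric subalgebra $\gt_o\subset\out_\0(\gs)$, and the resulting $\gt_\gs=\gh\oplus\gt_o$ is toric provided $[\gh,\out(\gs)]=0$. Maximality inside $\oder_\0(\gs)$ then follows from a centralizer computation: any semisimple $x=x_\gs+x_o\in\oder_\0(\gs)$ commuting with $\gt_\gs$ splits into commuting semisimple parts $x_\gs\in\gh$ (using self-centralization of $\gh$ in $\gs_\0$) and $x_o\in\gt_o$ (using maximality of $\gt_o$). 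The crucial commutativity $[\gh,\out(\gs)]=0$, verified case by case using the explicit form of $\out(\gs)$, is the main technical point of the proof and is where I expect the main bookkeeping obstacle.

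The dimension of $\gt_o$ is then obtained by inspection. For $\mathfrak{psl}(n+1|n+1)$, $\mathfrak{spe}(n)$, $S(n)$, $H(n)$ the space $\out_\0(\gs)$ is one-dimensional, spanned respectively by the derivation rescaling the two $\mathfrak{sl}(n+1)$ summands oppositely, the even scaling derivation on $\Pi(\gs_\1)=S^2(\bC^n)\oplus\L^2((\bC^n)^*)$, and the Euler grading operator inherited from $W(n)$; thus $\dim\gt_o=1$. For $\mathfrak{psq}(n)$ the only outer derivation is odd, for $\widetilde S(n)$ a direct check gives $\out_\0(\gs)=0$, and in all remaining cases $\out(\gs)=0$, so $\gt_o=0$. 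For the $1$-$1$ correspondence, finally: any grading of $\gs$ is given by some $D\in\oder_\0(\gs)$, whose adjoint action on $\oder(\gs)$ extends it to a grading of $\oder(\gs)$; conversely restriction of any grading of $\oder(\gs)$ to the ideal $\gs$ is a grading of $\gs$, and the two operations are mutually inverse because $\gs$ has trivial centre. Since $\oder(\gs)$ is algebraic, its maximal tori are conjugate under $\Aut^\circ(\oder(\gs))$ by the discussion of \S\ref{sec:3.2}, so every grading operator is conjugated into $\gt_\gs=\gh\oplus\gt_o$, yielding the claimed bijection.
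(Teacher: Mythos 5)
Your proposal is correct and takes essentially the same route as the paper, whose entire proof consists of the single remark that the statement follows from ``a direct inspection of \cite[Prop.\ 5.1.2]{K0} and its proof'': you are simply writing out that inspection (the semidirect decomposition, the case-by-case check of $[\gh,\out(\gs)]=(0)$, maximality of $\gh\oplus\gt_o$ via the centralizer computation, and the conjugacy of maximal tori from \S 3.2), which the paper leaves entirely to the reader. The only wrinkle is your handling of $\widetilde S(n)$, which you first list among the algebras with nontrivial outer derivations and then dispose of by checking $\out_{\0}(\gs)=(0)$, whereas the paper's Table \ref{tableouter} omits $\widetilde S(n)$ altogether (i.e.\ takes $\out(\widetilde S(n))=(0)$); either way the conclusion $\gt_o=(0)$ agrees with the statement.
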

We note that the subalgebra $\out(\gs)$ is always stable under any grading operator $D\in\gt_\gs$ and hence graded. The cases where $\out(\gs)$ is nontrivial are summarized in Table \ref{tableouter}, for the reader's convenience (therein we denote by $T$ a two-dimensional solvable Lie superalgebra of dimension $(1,1)$ if $n$ is odd, respectively of dimension (2,0) is $n$ is even). 

{\small
\begin{table}[H]
\begin{centering}
\makebox[\textwidth]{%
\begin{tabular}{|c|c|c|c|c|c|c|}
\hline
$\gs$ & $\begin{gathered}\mathfrak{psl}(n+1|n+1)^{\phantom{c^C}}\\ n\geq 2\end{gathered}$ & $\mathfrak{psl}(2|2)$ & $\begin{gathered} S(n)^{\phantom{c^C}}\\ n\geq 4\end{gathered}$ & $\begin{gathered}\mathfrak{spe}(n)^{\phantom{c^C}}\\ n\geq 3\end{gathered}$ & $\begin{gathered}\mathfrak{psq}(n)^{\phantom{c^C}}\\ n\geq 3\end{gathered}$ & $\begin{gathered} H(n)^{\phantom{c^C}}\\ n\geq 5\end{gathered}$\\
\hline
\hline
&&&&\\[-3mm]
$\out(\gs)$&
$\bC$
& $\mathfrak{sl}(2)$ & $\bC$ & $\bC$ & $\Pi(\bC)$ & $T$
\\
\hline
\end{tabular}}
\end{centering}
\caption[]{\label{tableouter} The algebras of outer derivations of simple Lie superalgebras.} \vskip14pt
\end{table}}
\vskip-0.5cm\par\noindent
Recall that a representation of a Lie superalgebra is called {\it irreducible of $G$-type}
if it does not admit any nontrivial submodule whether or not $\bZ_2$-graded, see \cite{BL}.
The following is a basic but useful result.
\begin{lemma}\label{lemmetto}
Let $\gs$ be a simple Lie superalgebra. If $\gs=\bigoplus\gs^p$ is a grading of depth one and $\oder(\gs)=\bigoplus \oder^p(\gs)$ the associated grading of $\oder(\gs)$, then:
\begin{itemize}
\item[i)] if $x\in\oder^p(\gs)$, $p\geq 0$, satisfies $[x,\gs^{-1}]=(0)$
then $x=0$;
\item[ii)] $\gs^0$ and $\gs^1$ are nonzero;
\item[iii)] the adjoint action of $\gs^{0}$ on $\gs^{-1}$ is irreducible of $G$-type;
\item[iv)] the depth $d(\oder(\gs))\geq d(\gs)$ and if $D\in\gh$ then $d(\oder(\gs))=d(\gs)$;
\item[v)] if $x\in\gs^{-1}$ satisfies $[\gs^{0},x]=(0)$ then $x=0$.
\end{itemize} 
\end{lemma}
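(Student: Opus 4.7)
My plan is to treat the parts in the order (ii), (iii), an auxiliary centralizer vanishing, (i), (v), (iv); the central tool throughout is to generate an ideal of $\gs$ and control its intersection with $\gs^{-1}$ by a Jacobi bookkeeping. For (ii): if $\gs^{0}=(0)$, then $[\gs^{-1},\gs^{-1}]\subset\gs^{-2}=(0)$ together with $[\gs^{-1},\gs^{q}]\subset\gs^{q-1}$ imply that $\bigoplus_{q\geq 1}\gs^{q}$ is an ideal of $\gs$, which vanishes by simplicity and forces $\gs=\gs^{-1}$ to be abelian, a contradiction. Similarly, if $\gs^{1}=(0)$, then $\bigoplus_{q\geq 2}\gs^{q}$ is an ideal whose vanishing exhibits $\gs^{-1}$ as an abelian ideal of $\gs=\gs^{-1}\oplus\gs^{0}$.

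For (iii), I would take any subspace $M\subseteq\gs^{-1}$ (not necessarily $\bZ_{2}$-graded) with $[\gs^{0},M]\subset M$ and consider the ideal $\hat M$ of $\gs$ it generates. I claim $\ad_\gs^{k}(M)\cap\gs^{-1}\subseteq M$ for every $k\geq 0$, by induction on $k$. In the step, the contributions to $\ad_\gs^{k+1}(M)\cap\gs^{-1}$ can only come from $[\gs^{0},\ad_\gs^{k}(M)\cap\gs^{-1}]$ and $[\gs^{-1},\ad_\gs^{k}(M)\cap\gs^{0}]$: the first lies in $[\gs^{0},M]\subseteq M$ by induction and the $\gs^{0}$-stability of $M$, while for the second one expands $[w,[a_{1},\cdots,[a_{k},z]\cdots]]$ with $w\in\gs^{-1}$, $z\in M$, by repeated use of the superderivation rule $[w,[a,b]]=[[w,a],b]\pm[a,[w,b]]$; the innermost term $[w,z]$ vanishes since $[\gs^{-1},M]\subset\gs^{-2}=(0)$, and what remains is a sum of length-$k$ iterated brackets starting from $M$ of total degree $-1$, lying in $M$ by induction. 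Hence $\hat M\cap\gs^{-1}=M$; by simplicity $\hat M\in\{(0),\gs\}$, so $M\in\{(0),\gs^{-1}\}$.

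For (i), the auxiliary fact is that $C:=\{z\in\gs_{0}\,|\,[z,\gs^{-1}]=0\}$ is zero, where $\gs_{0}:=\bigoplus_{p\geq 0}\gs^{p}$. A parallel induction shows $\ad_\gs^{k}(C)\subseteq\gs_{0}$ for all $k$, since the degree-$(-1)$ contribution to $\ad_\gs^{k+1}(C)$ vanishes by the same Jacobi expansion (using $[\gs^{-1},C]=0$ and the inductive $\ad_\gs^{k}(C)\cap\gs^{-1}=(0)$); so the ideal generated by $C$ lies in $\gs_{0}\subsetneq\gs$ and is zero by simplicity. For (i) proper, take $x\in\oder^{p}(\gs)$ with $p\geq 0$ and $[x,\gs^{-1}]=0$, and induct on $q\geq -1$ to show $x(\gs^{q})=0$: from $x([a,y])=\pm[a,x(y)]$ for $a\in\gs^{-1}$ and the inductive hypothesis $x(\gs^{q-1})=0$, one obtains $[a,x(y)]=0$ for all $a\in\gs^{-1}$, and since $x(y)\in\gs^{p+q}\subseteq\gs_{0}$ (as $p+q\geq 0$ for $q\geq 0$) the auxiliary fact forces $x(y)=0$.

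Finally, for (v) the $\gs^{0}$-invariants in $\gs^{-1}$ form a $\gs^{0}$-submodule equal to $(0)$ or $\gs^{-1}$ by (iii); the latter would make $\gs^{0}\subset\oder^{0}(\gs)$ annihilate $\gs^{-1}$, contradicting (i) combined with $\gs^{0}\neq(0)$. For (iv), $\gs^{-1}\subset\oder^{-1}(\gs)$ yields $d(\oder(\gs))\geq d(\gs)=1$; if $D\in\gh$ then Proposition \ref{utile} gives $[D,\out(\gs)]=0$, so $\out(\gs)\subset\oder^{0}(\gs)$ and the negative graded parts of $\oder(\gs)$ coincide with those of $\gs$, giving equality. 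I expect the main obstacle to be the pair of inductive transitivity statements—$\ad_\gs^{k}(M)\cap\gs^{-1}\subseteq M$ in (iii) and $\ad_\gs^{k}(C)\subseteq\gs_{0}$ for the auxiliary step—where the bookkeeping of degree shifts and parity signs in the iterated Jacobi expansion must be done carefully to ensure that all unwanted degree-$(-1)$ contributions either vanish at the innermost slot or redistribute into length-$k$ brackets controlled by the inductive hypothesis.
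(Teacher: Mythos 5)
Your proof is correct and follows essentially the same strategy as the paper: reduce each statement to the non‑existence of proper nonzero (possibly non‑$\bZ_2$‑graded) ideals of $\gs$, using the depth‑one condition and Jacobi bookkeeping to control the degree $-1$ component of a generated ideal, plus Proposition \ref{utile} for (iv). The only differences are organizational — you spell out the inductive verification that $\langle\gm\rangle^{-1}=\gm$ and route (i) through an auxiliary centralizer statement rather than splitting into inner and outer derivations as the paper does — and these are sound.
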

\begin{proof}
(i) As $\gs^{-1}=\gs^{-1}_{\0}\oplus\gs^{-1}_{\1}$ is a subspace of $\gs$, it is enough to consider an homogeneous $x$. If $x\in\gs^p$, the condition $[x,\gs^{-1}]=(0)$  implies that the ideal $<x>$ generated by $x$ in $\gs$ is nonnegatively graded, hence trivial and $x=0$. 

If $x\in\out^p(\gs)$ satisfies $[x,\gs^{-1}]=(0)$ then for all $y\in\gs^0$ and $z\in\gs^{-1}$ one has $[y,z]\in\gs^{-1}$ and
$
[[x,y],z]=[x,[y,z]]\pm[[x,z],y]=0
$.
It follows that $[x,y]\in\gs^p$ is zero and that $[x,\gs^{p}]=(0)$ for any $p\geq 0$, by a simple induction process. Summarizing the adjoint action of $x$ on $\gs$ is trivial and $x=0$.
\vskip0.05cm\par
(ii) First of all $\gs^0\neq (0)$ otherwise $\gs^p=(0)$ for any $p\geq 0$ and $\gs=\gs^{-1}$ is abelian, by point (i). Similarly $\gs^1\neq(0)$ otherwise $\gs^p=(0)$ for any $p\geq 1$ and $\gs^{-1}$ is a nontrivial ideal of $\gs=\gs^{-1}\oplus\gs^0$.
\vskip0.05cm\par
(iii) Let $\gm$ be a not necessarily $\bZ_2$-graded $\gs^0$-submodule of $\gs^{-1}$ and
$$
<\gm>=\gm\oplus\operatorname{span}\left\{[\gs^{p_1},[\gs^{p_2},[\ldots,[\gs^{p_q},\gm]\ldots]]]\!\!\!\!\!\!\!\!\!\!\phantom{{L^L}^L}|\;q, p_i>0\right\}
$$
the ideal generated by $\gm$ in $\gs$; it is $\bZ$-graded with $<\gm>^{-1}=\gm$. The fact that a simple Lie superalgebra does not admit any nontrivial ideal whether or not $\bZ_2$-graded (see e.g. \cite{Sc}) forces $\gm=\gs^{-1}$ or $\gm=(0)$.
\vskip0.05cm\par
(iv) it is clear from Proposition \ref{utile}.
\vskip0.05cm\par
(v) a direct consequence of (iii).
\end{proof}
By Proposition \ref{utile}, Lemma \ref{lemmetto} and the fact that $\gs^{-1}\subset\oder^{-1}(\gs)$ is stable under the adjoint action of $\oder^0(\gs)$ one gets the following.
 \begin{proposition}\label{subalgebraF}
Let $\gs=\bigoplus\gs^p$ be a grading of depth one of a simple Lie superalgebra. Any graded subalgebra $\gs\subset\gg\subset\oder(\gs)$ is of the form $\gg=\gs\niplus F$ for some graded subalgebra $F=\gg\cap\out(\gs)$ of $\out(\gs)$ and it is a transitive nonlinear  $\bZ$-graded Lie superalgebra. Moreover it is of depth one and irreducible if and only if the depth $d(F)\leq 0$ (i.e. $F$ is graded in nonnegative degrees).
\end{proposition}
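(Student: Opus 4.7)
The plan is to establish the semidirect decomposition first, then observe that transitivity and nonlinearity are automatic consequences of the assumption $\gs \subset \gg$, and finally analyse when the resulting grading of $\gg$ has depth one and is irreducible by translating these conditions into properties of the grading of $F$.

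For the decomposition, I would invoke Proposition \ref{utile}, which gives $\oder(\gs) = \gs \niplus \out(\gs)$ and ensures that $\out(\gs)$ is stable under every grading operator $D \in \gt_\gs$; consequently $\out(\gs)$ is a graded subalgebra and the sum is compatible with the $\bZ$-grading. Since $\gs \subset \gg$, any $x \in \gg$ decomposes uniquely as $x = x_\gs + x_o$ with $x_\gs \in \gs$ and $x_o \in \out(\gs)$, and then $x_o = x - x_\gs \in \gg$; setting $F := \gg \cap \out(\gs)$ therefore yields a graded subalgebra of $\out(\gs)$ with $\gg = \gs \oplus F$. Because $\gs$ is an ideal of $\oder(\gs)$ it is an ideal of $\gg$, whence $\gg = \gs \niplus F$.

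Next, I would verify transitivity and nonlinearity. If $x \in \gg^p$ with $p \geq 0$ satisfies $[x, \gg^{-1}] = 0$, the inclusion $\gs^{-1} \subset \gg^{-1}$ gives $[x, \gs^{-1}] = 0$, and Lemma \ref{lemmetto}(i) applied to $x \in \oder^p(\gs)$ forces $x = 0$. Nonlinearity is immediate from $\gg^1 \supset \gs^1 \neq 0$ by Lemma \ref{lemmetto}(ii).

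For the main equivalence, I would use that $\gs$ has depth one to conclude $\gg^p = F^p$ for $p < -1$, so $d(\gg) = 1$ is equivalent to $d(F) \leq 1$. If $d(F) \leq 0$ then $\gg^{-1} = \gs^{-1}$ and the action of $\gg^0 \supset \gs^0$ on $\gg^{-1}$ extends the irreducible $\gs^0$-action of Lemma \ref{lemmetto}(iii), so $\gg$ is of depth one and irreducible. Conversely, $d(F) \geq 2$ gives $d(\gg) \geq 2$, and $d(F) = 1$ forces $F^{-1} \neq 0$, in which case $\gs^{-1}$ is a proper nontrivial $\gg^0$-submodule of $\gg^{-1} = \gs^{-1} \oplus F^{-1}$, precluding irreducibility. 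The step requiring the most care here (and the one I expect to be the only real sticking point) is checking that $\gs^{-1}$ is $\gg^0$-invariant, which holds because $F^0 \subset \out^0(\gs)$ consists of degree-zero derivations of $\gs$ and therefore preserves each homogeneous component of $\gs$. Combining the two cases yields the equivalence $d(F) \leq 0 \Longleftrightarrow \gg$ is of depth one and irreducible.
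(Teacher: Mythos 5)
Your proposal is correct and uses exactly the ingredients the paper cites for this result (Proposition \ref{utile} for the graded semidirect decomposition, Lemma \ref{lemmetto} for transitivity, nonlinearity and irreducibility, and the $\oder^0(\gs)$-stability of $\gs^{-1}$ for the converse direction); the paper states the proposition as an immediate consequence of these facts without writing out the details. Your write-up is essentially the paper's argument made explicit.
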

The classification of depth one gradings of the simple Lie superalgebras $\gs$
was developed in \cite{K1, S1} (see also e.g. \cite{Pol}). We now recall it and, in turn, also describe the associated
gradings of $\out(\gs)$ and $\oder(\gs)$.
\vskip0.3cm\par\noindent
\subsubsection{Basic superalgebras}\hfill\vskip0.3cm\par\noindent\label{basicgrad}
Basic Lie superalgebras admit a convenient description in terms of root systems, Cartan matrices and Dynking diagrams; we recall here only the facts that we need and refer for more details to \cite{K0, K0bis, Serga, Leites}.

Let $\gs$ be a basic superalgebra, $\gt_\gs=\gh\oplus\gt_o$ a maximal torus of $\gs$ ($\gt_o\subset\out_{\bar 0}(\gs)$ is trivial with the exception of $\mathfrak{psl}(n+1|n+1)$) and $\Delta=\Delta(\gs,\gt_\gs)$ the associated root system. Then $\gs_{\bar0}$ and $\gs_{\bar1}$ decompose into the direct sum of root spaces $\gs^\alpha$ and a root $\alpha$ is called \emph{even} (resp. \emph{odd}) if $\gs_{\bar0}^\alpha$ (resp. $\gs_{\bar1}^\alpha$) is nonzero. Every root is either even or odd and the root spaces are all one-dimensional (for $\gs=\mathfrak{psl}(2|2)$ this follows from the fact that $\gt_o$ is non-trivial). 
Many properties of root systems of Lie algebras remain true for basic Lie superalgebras, see \cite[Proposition 5.3]{K0bis}. In particular, any decomposition $\Delta=\Delta^+\cup-\Delta^+$ into positive and negative roots determines a system $$\Sigma=\{\alpha_1,\ldots,\alpha_r\}$$ of simple positive roots and every positive root $\alpha\in\Delta^+$ can be written as a sum 
$$\alpha=\sum_{i=1}^{r}b_i\alpha_i$$ with non-negative integer coefficients $b_i$.

The Weyl group of the reductive Lie algebra $\gs_{\bar0}$ acts on the set of simple root systems. In contrast with the Lie algebra case this action is not transitive, and hence different simple root systems of the same 
basic Lie superalgebra may not be conjugated. 
To each orbit of the Weyl group one can associate a Dynkin diagram as follows (they were first introduced in \cite{K0, Serga}; we will use the slightly different conventions given by \cite{Leites}). 
\vskip0.2cm\par
Consider a Cartan matrix $(a_{ij})$ of order $r$ associated to $\Sigma$, see \cite{K0, Leites}. Each simple root $\alpha_i$ corresponds to a node which is colored \emph{white} $\begin{tikzpicture}
\node[root]   (1) [] {};
\end{tikzpicture}$ if $\alpha$ is even (in this case $a_{ii}=2)$, \emph{gray} $\begin{tikzpicture}
\node[xroot]   (1) [] {};
\end{tikzpicture}$ if $\alpha$ is odd and $B$-isotropic (in this case $a_{ii}=0)$, or \emph{black} 
$\begin{tikzpicture}
\node[broot]   (1) [] {};
\end{tikzpicture}$
if $\alpha$ is odd and non-isotropic (in this case $a_{ii}=1$).

The $i$-th and $j$-th nodes of the diagram are not joined if $a_{ij}=a_{ji}=0$, otherwise they are joined by 
$\operatorname{max}(|a_{ij}|,|a_{ji}|)$-edges with an arrow pointing from $\alpha_i$ to $\alpha_j$ if $|a_{ij}|<|a_{ji}|$. 
If $\gg=\mathfrak{osp}(4|2;\a)$ the Cartan matrix is integer if and only if $\a$ is an integer; in this case we illustrate just with $\a=2$.




Finally, we mark the $i$-th node with the corresponding Dynkin mark, that is the coefficient $m_{i}$ of the expression of the highest root as sum of simple roots
$$\alpha_{\text{max}}=\sum_{i=1}^{r}m_{i}\alpha_i\;. $$ 

The list of all possible Dynkin diagrams associated to basic Lie superalgebras is contained in \cite[Tables 1-5]{Leites}. 

Now, setting $\deg\alpha_i=\l_i$ for some nonnegative integer $\l_i$ and extending the definition to all roots by
\[\deg (\sum_{i=1}^{r} b_i\alpha_i)=\sum_{i=1}^{r} b_i\deg(\alpha_i)\;, \]
one gets the grading of $\gs$:
\beq
\label{allgradings}
\gs^0=\gh\oplus\bigoplus_{\substack{\alpha\in\Delta\\\deg\alpha=0}}\gs^\alpha\;,\qquad
\gs^p=\bigoplus_{\substack{\alpha\in\Delta\\\deg\alpha=p}}\gs^\alpha\;\;\;\;\;\text{for}\;\;\;\;\;p\neq 0\;.
\eeq
By \cite[Theorem 1 \& Remark 3]{K1}, all possible gradings of $\gs$ are equivalent to one of this form, for some choice of $\Sigma$. 
The depth $d(\gs)$ of $\gs$ is equal to the degree of the highest root 
$$\deg(\alpha_{\text{max}})=\sum_{i=1}^{r}b_{i,\text{max}}\deg(\alpha_i)\,$$
and coincides also with the height $\ell$ of $\gs$. Note that the Lie superalgebra \eqref{allgradings} has depth one if and only if all $\l_j=0$ with the exception of a simple root $\alpha_i$, called the {\it crossed root}, which satisfies
$m_{i}=1$ and $\l_i=1$.

The subalgebra $\gs^0$ is the direct sum of a center and a Lie superalgebra whose Cartan matrix
is obtained from the Cartan matrix of $\gs$ by removing all 
rows and columns relative to the crossed root.

Table \ref{importanttab} displays all depth one gradings of the
the basic Lie superalgebras (see \cite{S1, K1, Pol}). The symbol
\begin{tikzpicture}
\node[]  (1) 
[]            {\pie{180}};
\end{tikzpicture} 
indicates a node which can be either white or gray and $V_{\mu}$ stands for  the irreducible
module for (the semisimple part of) the Lie superalgebra $\gs^0$ with the highest weight $\mu$  and even highest vector.

We remark that some gradings in Table \ref{importanttab} are listed more than once.
We also note that the gradings of $\mathfrak{ab}(3)$ and $\mathfrak{osp}(4|2;\a)$
admit additional isomorphic presentations in terms of the following Dynkin diagrams of $\mathfrak{ab}(3)$ 
\begin{equation*}
\begin{tikzpicture}
\node[root]   (1)        [label=${{}^1}$]             {};
\node[xroot] (2) [right=of 1] {} [label=${{}^2}$]  edge [rtriplearrow] (1) edge [-] (1);
\node[root]   (3) [right=of 2] {} [label=${{}^3}$] edge [-] (2);
\node[root]   (4) [right=of 3] {} [label=${{}^2}$] edge [doublearrow] (3);
\end{tikzpicture}\;\quad
\begin{tikzpicture}
\node[xroot]   (1)        [label=${{}^2}$]             {};
\node[xroot] (2) [right=of 1] {} [label=${{}^3}$]  edge[-] (1);
\node[root]   (3) [right=of 2] {} [label=${{}^2}$] edge [doublearrow] (2);
\node[root]   (4) [right=of 3] {} [label=${{}^1}$] edge [-] (3);
\end{tikzpicture}\;\quad
\begin{tikzpicture}
\node[xroot]   (1) [label=${{}^3}$] {};
\node[root]   (2) [right=of 1][label=${{}^2}$] {} edge [doublearrow](1);
\node[xroot]   (3) [above left=of 1] [label=left:${{}^1}$] {} edge [doublenoarrow] (1);
\node[xroot]   (4) [below left=of 1] [label=left:${{}^2}$] {} edge [-] (1) edge [doublenoarrow] (3) edge [-] (3);
\end{tikzpicture}\;\quad
\begin{tikzpicture}
\node[root]   (1)        [label=${{}^1}$]             {};
\node[xroot] (2) [right=of 1] {} [label=${{}^2}$]  edge [rdoublearrow] (1);
\node[xroot]   (3) [right=of 2] {} [label=${{}^2}$] edge [doublenoarrow] (2);
\node[root]   (4) [below right=of 2] {} [label=right:${{}^2}$] edge [-] (2) edge [-] (3);
\end{tikzpicture}
\end{equation*}
and of $\mathfrak{osp}(4|2;\a)$
\begin{equation*}
\begin{tikzpicture}
\node[xroot]   (4) [right=of 3][label=${{}^1}$] {} ;
\node[xroot]   (5) [above right=of 4] [label=right:${{}^1}$] {} edge [-] (4);
\node[xroot]   (6) [below right=of 4] [label=right:${{}^1}$] {} edge [doublenoarrow] (4) edge [doublenoarrow] (5) edge [-] (5);
\end{tikzpicture}\;\qquad
\begin{tikzpicture}
\node[root](1)        [label=${{}^1}$]             {};
\node[xroot](2) [right=of 1] {} [label=${{}^2}$] edge[rdoublearrow] (1);
\node[root](3) [right=of 2]{} [label=${{}^1}$] edge[doublearrow] (2) edge [-] (2);
\end{tikzpicture}\;.
\end{equation*}
These presentations are not displayed in Table \ref{importanttab}. 
\medskip\par
\begin{sidewaystable}
\centering
\vskip13cm
{\scriptsize \begin{tabular}{|c|c|c|c|c|c|c|}
\hline
$\gs$ & Dynkin diagram & \text{order}\;$(a_{ij})$ & $\begin{gathered}\!\!\!\!\!\!\!\!\phantom{L^{1}}\text{parity of the}\\\text{number of}\;\;\begin{tikzpicture}\node[xroot](1)[label=${{}}$]{};
\end{tikzpicture}\end{gathered} $ & $\begin{gathered}\text{crossed}\\ \text{root}\end{gathered}$ & $\gs^{-1}=(\gs^1)^*$ & $\gs^{0}$ \\
\hline
\hline
&&&&\\[-3mm]
$\begin{gathered}\mathfrak{sl}(m+1|n+1)\\m< n\end{gathered}$&
$\begin{gathered}\\ \\
\begin{tikzpicture}
\node[]  (1) 
[label=${{}^1}$]            {\pie{180}};
\node[]   (2) [right=of 1] {$\;\cdots\,$} edge [-] (1);
\node[]  (3) [right=of 2] [label=${{}^1}$]{\pie{180}} edge [-] (2);
\end{tikzpicture}\end{gathered}$
&$\begin{gathered}\\ \\ m+n+1 \end{gathered}$ & $\begin{gathered}\\ \\ \text{any} \end{gathered}$ & $\begin{gathered}\\ \\ \text{any} \end{gathered}$ & 
$\begin{gathered}\\ \\  \bC^{m+1-p|n+1-q}\otimes (\bC^{p|q})^* \end{gathered}$ & $\mathfrak{s}(\ggl(m+1-p|n+1-q)\oplus\ggl(p|q))$ 
\\
\cline{1-1}\cline{7-7}
&&&&\\[-2mm]
$\begin{gathered}\mathfrak{psl}(m+1|n+1)\\ m=n\neq 0\end{gathered}$&
&  &  &  &  & $\mathfrak{ps}(\ggl(m+1-p|n+1-q)\oplus\ggl(p|q))$ 
\\
\hline
&&&&\\[-2mm]
&
\begin{tikzpicture}
\node[]   (3) [label=${{}^1}$] {\pie{180}};
\node[]   (4) [right=of 3] {\pie{180}} [label=${{}^2}$]  edge [-] (3);
\node[]   (5) [right=of 4] {$\;\cdots\,$} edge [-] (4);
\node[]   (6) [right=of 5] {\pie{180}} [label=${{}^2}$] edge [-] (5);
\node[root]   (7) [right=of 6] {} [label=${{}^2}$]  edge [rdoublearrow] (6);
\end{tikzpicture}& $\begin{gathered} m+n \\ (m+n\geq 3) \end{gathered}$ & $0$ & \text{first} &
$\bC^{2m-1|2n}$ & $\mathfrak{cosp}(2m-1|2n)$ 
\\
\cline{2-7}

&&&&\\[-3mm]
$\begin{gathered}\mathfrak{osp}(2m+1|2n)\\ m,n\geq1\\ \\ \end{gathered}$
&
\begin{tikzpicture}
\node[]   (3) [label=${{}^1}$] {\pie{180}};
\node[]   (4) [right=of 3] {\pie{180}} [label=${{}^2}$]  edge [-] (3);
\node[]   (5) [right=of 4] {$\;\cdots\,$} edge [-] (4);
\node[]   (6) [right=of 5] {\pie{180}} [label=${{}^2}$] edge [-] (5);
\node[broot]   (7) [right=of 6] {} [label=${{}^2}$]  edge [-] (6);
\end{tikzpicture}& m+n & 1 & \text{first}&
$\bC^{2m-1|2n}$ & $\mathfrak{cosp}(2m-1|2n)$ 
\\
\hline

&&&&\\[-5mm]
&$\begin{gathered}\\
\begin{tikzpicture}
\node[]   (1) [label=${{}^1}$] {\pie{180}};
\node[]   (2) [right=of 1][label=${{}^2}$] {\pie{180}} edge [-] (1);
\node[]   (3) [right=of 2] {$\;\cdots\,$} edge [-] (2);
\node[]   (4) [right=of 3][label=${{}^2}$] {\pie{180}} edge (3);
\node[xroot]   (5) [above right=of 4] [label=right:${{}^1}$] {} edge [-] (4);
\node[xroot]   (6) [below right=of 4] [label=right:${{}^1}$] {} edge [-] (4) edge [doublenoarrow] (5);
\end{tikzpicture}\end{gathered}$&$\begin{gathered}\\ \\ m+n \end{gathered}$ &$\begin{gathered}\\ \\ 1\end{gathered}$&$\begin{gathered} \\ \\ \text{first}\\ \\ \text{last}\end{gathered}$&
$\begin{gathered}\\ \\ \bC^{2m-2|2n}\\ \\ \L^2(\bC^{m|n})\end{gathered}$ & $\begin{gathered} \\ \\ \mathfrak{cosp}(2m-2|2n)\\ \\ \mathfrak{gl}(m|n)\end{gathered}$ 
\\
\cline{2-7}

&&&&\\[-2mm]
&
\begin{tikzpicture}
\node[]   (1) [label=${{}^2}$] {\pie{180}};
\node[]   (2) [right=of 1][label=${{}^2}$] {\pie{180}} edge [-] (1);
\node[]   (3) [right=of 2] {$\;\cdots\,$} edge [-] (2);
\node[]   (4) [right=of 3][label=${{}^2}$] {\pie{180}} edge (3);
\node[xroot]   (5) [above right=of 4] [label=right:${{}^1}$] {} edge [-] (4);
\node[xroot]   (6) [below right=of 4] [label=right:${{}^1}$] {} edge [-] (4) edge [doublenoarrow] (5);
\end{tikzpicture}& $\begin{gathered} m+n \\ \\ \end{gathered}$&$\begin{gathered} 0\\ \\ \end{gathered}$&$\begin{gathered} \text{last}\\ \\ \end{gathered}$&$\begin{gathered} \L^2(\bC^{m|n})\\ \\ \end{gathered}$
 &$\begin{gathered} \ggl(m|n)\\  \\ \end{gathered}$
\\
\cline{2-7}

&&&&\\[0mm]
$\begin{gathered}\mathfrak{osp}(2m,2n) \\ m,n\geq1\\ m+n\geq 3\end{gathered}$
&
\begin{tikzpicture}
\node[]   (1) [label=${{}^1}$] {\pie{180}};
\node[]   (2) [right=of 1][label=${{}^2}$] {\pie{180}} edge [-] (1);
\node[]   (3) [right=of 2] {$\;\cdots\,$} edge [-] (2);
\node[]   (4) [right=of 3][label=${{}^2}$] {\pie{180}} edge (3);
\node[root]   (5) [above right=of 4] [label=right:${{}^1}$] {} edge [-] (4);
\node[root]   (6) [below right=of 4] [label=right:${{}^1}$] {} edge [-] (4);
\end{tikzpicture}&m+n & 0 & $\begin{gathered} \text{first}\\ \\ \text{last}\\ \end{gathered}$&
$\begin{gathered} \bC^{2m-2|2n}\\ \\ \L^2(\bC^{m|n})\\ \end{gathered}$ & $\begin{gathered} \mathfrak{cosp}(2m-2|2n)\\ \\ \mathfrak{gl}(m|n)\\ \end{gathered}$ 
\\
\cline{2-7}

&&&&\\[-1mm]
&
\begin{tikzpicture}
\node[]   (1) [label=${{}^2}$] {\pie{180}};
\node[]   (2) [right=of 1][label=${{}^2}$] {\pie{180}} edge [-] (1);
\node[]   (3) [right=of 2] {$\;\cdots\,$} edge [-] (2);
\node[]   (4) [right=of 3][label=${{}^2}$] {\pie{180}} edge (3);
\node[root]   (5) [above right=of 4] [label=right:${{}^1}$] {} edge [-] (4);
\node[root]   (6) [below right=of 4] [label=right:${{}^1}$] {} edge [-] (4);
\end{tikzpicture}& $\begin{gathered} m+n \\ \\ \end{gathered}$ & $\begin{gathered} 1\\ \\ \end{gathered}$& $\begin{gathered} \text{last}\\ \\ \end{gathered}$ & $\begin{gathered} \L^2(\bC^{m|n})\\ \\ \end{gathered}$
 & $\begin{gathered} \ggl(m|n)\\ \\ \end{gathered}$
\\
\cline{2-7}

&&&&\\[-2mm]
&
\begin{tikzpicture}
\node[]   (1)    [label=${{}^1}$]                 {\pie{180}};
\node[] (2) [right=of 1] [label=${{}^2}$]{\pie{180}} edge [-] (1);
\node[]   (3) [right=of 2] {$\;\cdots\,$} edge [-] (2);
\node[]   (4) [right=of 3][label=${{}^2}$]{\pie{180}}  edge [-] (3);
\node[root]   (5) [right=of 4] [label=${{}^1}$]{} edge [doublearrow] (4);
\end{tikzpicture}&m+n & 1& 
$\begin{gathered} \text{first}\\ \\ \text{last}\end{gathered}$&
$\begin{gathered} \bC^{2m-2|2n}\\ \\ \L^2(\bC^{m|n})\end{gathered}$ & $\begin{gathered} \mathfrak{cosp}(2m-2|2n)\\ \\ \mathfrak{gl}(m|n)\end{gathered}$ 
\\
\cline{2-7}

&&&&\\[-2mm]
&
\begin{tikzpicture}
\node[]   (1)    [label=${{}^2}$]                 {\pie{180}};
\node[] (2) [right=of 1] [label=${{}^2}$]{\pie{180}} edge [-] (1);
\node[]   (3) [right=of 2] {$\;\cdots\,$} edge [-] (2);
\node[]   (4) [right=of 3][label=${{}^2}$]{\pie{180}}  edge [-] (3);
\node[root]   (5) [right=of 4] [label=${{}^1}$]{} edge [doublearrow] (4);
\end{tikzpicture}&m+n & 0&\text{last} & $\L^2(\bC^{m|n})$
 & $\ggl(m|n)$ 
\\
\hline

&&&&\\[-2mm]
$\mathfrak{ab}(3)$&
\begin{tikzpicture}
\node[xroot]   (1)        [label=${{}^2}$]             {};
\node[root] (2) [right=of 1] {} [label=${{}^3}$]  edge[-] (1);
\node[root]   (3) [right=of 2] {} [label=${{}^2}$] edge [doublearrow] (2);
\node[root]   (4) [right=of 3] {} [label=${{}^1}$] edge [-] (3);
\end{tikzpicture} & $4$ & $1$ & \text{last} &
$V_{-\e_1+\d_1+\d_2}$ & $\mathfrak{cosp}(2|4)$ \\
\hline

&&&&\\[-3mm]
$\begin{gathered}\\ \mathfrak{osp}(4|2;\a)\\ \a\neq0,\pm1,-2,-1/2\end{gathered}$
&
\begin{tikzpicture}
\node[root](1)        [label=${{}^1}$]             {};
\node[xroot](2) [right=of 1] {} [label=${{}^2}$] edge[-] (1);
\node[root](3) [right=of 2]{} [label=${{}^1}$] edge[doublearrow] (2);
\end{tikzpicture}& $3$ & $1$ & $\begin{gathered} \text{first} \\ \text{last} \end{gathered}$ &
$\begin{gathered} V_{-\a^{-1}\e_1}\\V_{-\a\e_1}\;\,\,\,\,\,\,\end{gathered}$ & $\ggl(1|2)$ \\
\cline{2-7}

&&&&\\[-2mm]
&
\begin{tikzpicture}
\node[root](1)        [label=${{}^1}$]             {};
\node[xroot](2) [right=of 1] {} [label=${{}^2}$] edge[rdoublearrow] (1) edge[-] (1);
\node[root](3) [right=of 2]{} [label=${{}^1}$] edge[-] (2);
\end{tikzpicture}& $3$ & $1$ & \text{last} &
$\begin{gathered} V_{(1+\a)^{-1}\e_1} \end{gathered}$ & $\ggl(1|2)$ \\
\hline 

\end{tabular}}
\vskip0.7cm
\caption[]{\label{importanttab} The depth one gradings of the basic Lie superalgebras.}
\end{sidewaystable}
\clearpage
\par\noindent
Now $\gs=\oder(\gs)$ for all basic Lie superalgebras, except $\gs=\mathfrak{psl}(n+1|n+1)$. 

If $n\geq 2$ then $\oder(\gs)\simeq\mathfrak{pgl}(n+1|n+1)$. More precisely $\out(\gs)=\gt_o$ is generated by an even element $Z$ which acts trivially on $\gs_{\bar 0}$ and with eigenvalues $\pm 1$ on the two components of $\gs_{\bar 1}$ (recall Table \ref{Kactab}). In particular $Z$ has degree zero for all gradings of $\gs$ and $\oder^0(\gs)\simeq \mathfrak{p}(\ggl(n+1-p|n+1-q)\oplus\ggl(p|q))$. 

If $\gs=\mathfrak{psl}(2|2)$ then $\oder(\gs)\simeq\gs\niplus\mathfrak{sl}(2)$ where
$\mathfrak{sl}(2)$ is generated by an element $Z$ as above and the two nilpotent even derivations $Z_{\pm}$ 
given by
\begin{align*}
Z_{\pm}&(\gs_{\bar 0})=Z_+(\bC^2\otimes (\bC^2)^*)=Z_-((\bC^2)^*\otimes\bC^2)=(0)\;,\\
Z_+&:(\bC^2)^*\otimes\bC^2\longrightarrow\bC^2\otimes (\bC^2)^*\quad\text{is an isomorphism of}\;\gs_{\bar 0}-\text{modules}\;,\\
Z_-&\big|_{\bC^2\otimes (\bC^2)^*} =(Z_+\big|_{(\bC^2)^*\otimes\bC^2})^{-1}:\bC^2\otimes (\bC^2)^*\longrightarrow(\bC^2)^*\otimes\bC^2\;.
\end{align*}
A direct computation which uses an explicit decomposition of $\mathfrak{psl}(2|2)$ in root spaces gives Table \ref{TABSL2} with the gradings of $\mathfrak{psl}(2|2)$ and $\sl(2)=\left\langle Z_+,Z,Z_-\right\rangle$.
{\small
\begin{table}[H]
\begin{centering}
\makebox[\textwidth]{%
\begin{tabular}{|c|c|c|c|c|c|}
\hline
$\gs$ & Dynkin diagram & Grading & $\deg(Z)$ & $\deg(Z_+)=-\deg(Z_-)$\\
\hline
\hline
&&&&\\[-3mm]
$\begin{gathered}\mathfrak{psl}(2|2)\end{gathered}$&
$\begin{gathered}
\begin{tikzpicture}
\node[root]   (1)       [label=${{}^1}$]              {};
\node[xroot]   (2) [right=of 1] [label=${{}^1}$] {} edge [-] (1);
\node[root]   (3) [right=of 2] [label=${{}^1}$] {} edge [-] (2);
\end{tikzpicture}\\

\begin{tikzpicture}
\node[xroot]   (1)       [label=${{}^1}$]              {};
\node[root]   (2) [right=of 1] [label=${{}^1}$] {} edge [-] (1);
\node[xroot]   (3) [right=of 2] [label=${{}^1}$] {} edge [-] (2);
\end{tikzpicture}\\
\begin{tikzpicture}
\node[xroot]   (1)       [label=${{}^1}$]              {};
\node[xroot]   (2) [right=of 1] [label=${{}^1}$] {} edge [-] (1);
\node[xroot]   (3) [right=of 2] [label=${{}^1}$] {} edge [-] (2);
\end{tikzpicture}\end{gathered}$
& $\deg(\a_i)=\l_i\geq 0$ & $0$ & $\begin{gathered} \l_1+2\l_2+\l_3\\ \l_1-\l_3^{\phantom{C}}\\ \l_1+\l_3^{\phantom{C}}\end{gathered}$ 
\\
\hline
\end{tabular}}
\end{centering}
\caption[]{\label{TABSL2} The gradings of $\oder(\gs)=\mathfrak{psl}(2|2)\niplus\sl(2)$.} \vskip14pt
\end{table}}
\vskip-0.5cm\par\noindent
\begin{proposition}
All transitive nonlinear irreducible graded Lie subalgebras $\gs\subset\gg\subset\oder(\gs)$ of depth $1$, 
$\gs=\mathfrak{psl}(2|2)$, are of the form $\gg=\gs\niplus F$ where $\gs=\bigoplus\gs^p$ is in Table \ref{TABSL22} and $F$ is a nonnegatively graded subalgebra of $\sl(2)$.
\end{proposition}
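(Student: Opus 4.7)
\begin{pfns}
The plan is to apply Proposition \ref{subalgebraF} specialized to $\gs = \mathfrak{psl}(2|2)$, combined with a finite enumeration of the depth-one gradings of $\gs$.

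First, Proposition \ref{subalgebraF} immediately yields that any graded Lie subalgebra $\gs \subset \gg \subset \oder(\gs)$ decomposes as $\gg = \gs \niplus F$ with $F = \gg \cap \out(\gs)$ a graded subalgebra of $\out(\gs)$; moreover transitivity and nonlinearity are automatic, and irreducibility together with depth one are equivalent to $F$ being nonnegatively graded. For $\gs = \mathfrak{psl}(2|2)$, Table \ref{tableouter} gives $\out(\gs) = \sl(2)$, so $F$ is forced to be a nonnegatively graded subalgebra of $\sl(2)$.

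Second, since $F$ is nonnegatively graded we have $d(\gg) = d(\gs)$, and the depth-one hypothesis on $\gg$ forces $d(\gs) = 1$ (the value $d(\gs) \leq 0$ would yield $\gg^{-1} = (0)$). Using \S \ref{basicgrad} and the three Dynkin diagrams of $\mathfrak{psl}(2|2)$ in Table \ref{TABSL2}, I would enumerate the depth-one gradings of $\gs$ by selecting a crossed root $\alpha_i$ of Dynkin mark one with $\lambda_i = 1$ and $\lambda_j = 0$ for $j \neq i$. The resulting finite list is what Table \ref{TABSL22} records. For each such grading of $\gs$ the induced grading on $\out(\gs) = \sl(2)$ is then read off from Table \ref{TABSL2}: $\deg Z = 0$ while $\deg Z_+ = -\deg Z_-$ is an explicit integer depending on the diagram; from these values one sees which elements of $\sl(2)$ can enter the nonnegatively graded subalgebra $F$.

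The converse is Proposition \ref{subalgebraF} run in reverse: for each grading of $\gs$ in Table \ref{TABSL22} and each nonnegatively graded subalgebra $F$ of $\sl(2)$ (compatible with the induced grading on $\sl(2)$), the semidirect sum $\gs \niplus F$ is automatically a transitive nonlinear irreducible $\bZ$-graded Lie superalgebra of depth one. The only substantive step is thus the finite combinatorial enumeration that produces Table \ref{TABSL22} together with the associated degrees of $Z_\pm$; I do not foresee a genuine obstacle beyond this bookkeeping.
\end{pfns}
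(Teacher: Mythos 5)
Your proposal is correct and follows essentially the same route as the paper, whose entire proof is the one-line citation of Proposition \ref{subalgebraF} together with Table \ref{TABSL2}; you have simply made explicit the bookkeeping (identifying $\out(\gs)=\sl(2)$, checking $d(\gs)=1$, and enumerating the crossed roots of mark one) that the paper leaves implicit.
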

{\scriptsize
\begin{table}[H]
\begin{centering}
\makebox[\textwidth]{%
\begin{tabular}{|c|c|c|c|c|c|c|c|}
\hline
Dynkin diagram & crossed root & $\gs^{-1}=(\gs^1)^*$ & $\gs^0$ & $\sl(2)^0$ & $\sl(2)^1$ & $\sl(2)^2$\\
\hline
\hline
&&&&\\[-3mm]
$\begin{gathered}
\begin{tikzpicture}
\node[root]   (1)       [label=${{}^1}$]              {};
\node[xroot]   (2) [right=of 1] [label=${{}^1}$] {} edge [-] (1);
\node[root]   (3) [right=of 2] [label=${{}^1}$] {} edge [-] (2);
\end{tikzpicture}\\

\begin{tikzpicture}
\node[root]   (1)       [label=${{}^1}$]              {};
\node[xroot]   (2) [right=of 1] [label=${{}^1}$] {} edge [-] (1);
\node[root]   (3) [right=of 2] [label=${{}^1}$] {} edge [-] (2);
\end{tikzpicture}\\

\begin{tikzpicture}
\node[xroot]   (1)       [label=${{}^1}$]              {};
\node[xroot]   (2) [right=of 1] [label=${{}^1}$] {} edge [-] (1);
\node[xroot]   (3) [right=of 2] [label=${{}^1}$] {} edge [-] (2);
\end{tikzpicture}
\end{gathered}$
& $\begin{gathered} \text{first}_{\phantom{C}}\\ \text{second}^{\phantom{C}^{\phantom{C}}}\\ \text{second}^{\phantom{C}^{\phantom{C}}}  \end{gathered}$ & $\begin{gathered} \bC^{1|2} \\ \bC^{2|0}\otimes(\bC^{0|2})^* \\ \bC^{1|1}\otimes(\bC^{1|1})^*  \end{gathered}$ & $\begin{gathered} \mathfrak{sl}(1|2) \\ \sl(2)\oplus \sl(2) \\ \sl(1|1)\oplus\sl(1|1)\end{gathered}$ & $\begin{gathered} Z\\ Z\\ \sl(2) \end{gathered}$ & $\begin{gathered}Z_+\\ (0)\\ (0)\end{gathered}$ & $\begin{gathered}(0) \\ Z_+ \\ (0)\end{gathered}$ 
\\
\hline
\end{tabular}}
\end{centering}
\caption[]{\label{TABSL22}} \vskip14pt 
\end{table}}
\vskip-0.5cm\par\noindent
\begin{proof}
At once from Proposition \ref{subalgebraF} and Table \ref{TABSL2}.
\end{proof}

\vskip0.3cm\par
\subsubsection{Strange superalgebras}\hfill\vskip0.3cm\par\noindent\label{strangegrad}
The gradings of $\gs=\mathfrak{psq}(n)$ are in a one-to-one correspondence with those of $\sl(n)\simeq\gs_{\bar 0}\simeq\gs_{\bar 1}$ and are of depth one if one and only one root of the Dynkin diagram of $\mathfrak{sl}(n)$ is crossed. 
{\scriptsize
\begin{table}[H]
\begin{centering}
\makebox[\textwidth]{%
\begin{tabular}{|c|c|c|c|c|}
\hline
$\gs$ & Dynkin diagram of $\mathfrak{sl}(n)$  & $\begin{gathered}\text{crossed}\\ \text{root}\end{gathered}$ & $\gs^{-1}=(\gs^1)^*$ & $\gs^{0}$ \\
\hline
\hline
&&&&\\[-3mm]
$\begin{gathered}\mathfrak{psq}(n)\\ n\geq 3\end{gathered}$&
\begin{tikzpicture}
\node[root]  (1) 
[label=${{}^1}$]            {};
\node[]   (2) [right=of 1] {$\;\cdots\,$} edge [-] (1);
\node[root]  (3) [right=of 2] [label=${{}^1}$]{} edge [-] (2);
\end{tikzpicture}
& \text{p-{th}} & $\bC^{p|p}\circledcirc(\bC^{n-p|n-p})^*$ & $\mathfrak{ps}(\mathfrak{q}(p)\oplus\mathfrak{q}(n-p))$
\\
\hline
\end{tabular}}
\end{centering}
\caption[]{\label{primatab34} The depth one gradings of $\mathfrak{psq}(n)$.}
\end{table}}
\par\noindent
In this case $\oder(\gs)=\mathfrak{pq}(n)$ and $\out(\gs)$ is generated by an odd derivation $D$ satisfying
$$
D(\gs_{\bar 0})=(0)\;,\qquad D\big|_{\gs_{\bar 1}}:\gs_{\bar 1}\mapsto \gs_{\bar 0}\;\;\;\text{is an isomorphism of}\;\;\gs_{\bar 0}-\text{modules}\,.
$$
It has degree zero for all gradings of $\gs$ and $\oder^0(\gs)\simeq \mathfrak{p}(\mathfrak{q}(p)\oplus\mathfrak{q}(n-p))$.
\bigskip\par
The gradings of $\gs=\mathfrak{spe}(n)$ are in a one-to-one correspondence with the gradings of $ \mathfrak{sl}(n)\simeq\mathfrak{spe}(n)_{\bar 0}$
and an integer $k$ which determines the degree $\deg(F)=k$ of the highest weight vector $F$ of the $\mathfrak{sl}(n)$-module $S^2(\bC^n)$. 

Table \ref{primatab35} below displays all depth one gradings of $\mathfrak{spe}(n)$; therein {\tiny $\begin{pmatrix} n& 0\\ 0 &n-2 \end{pmatrix}$} is the diagonal matrix of order $2(n-1)$ with the eigenvalues $n$ and $n-2$, each of the same multiplicity $n-1$. 

{\scriptsize
\begin{table}[H]
\begin{centering}
\makebox[\textwidth]{%
\begin{tabular}{|c|c|c|c|c|c|c|c|}
\hline
$\gs$ & Dynkin diagram of $\mathfrak{sl}(n)$  & $\begin{gathered}\text{crossed}\\ \text{root}\end{gathered}$ & $k$ & $\gs^{-1}$ & $\gs^{0}$ & $\gs^{1} $ & $\gs^{2}$ \\
\hline
\hline

&&&&\\[-3mm]
$$&
& \text{none} & $-1$ & $\Pi(S^2(\bC^n))$ & $\sl(n)$ & $\Pi(\L^2((\bC^n)^*))$ & $(0)$
\\
\cline{3-8}

&&&&\\[-3mm]
$\begin{gathered}\\ \\ \mathfrak{spe}(n)\\ n\geq 3\end{gathered}$&
$\begin{gathered}\\ \\
\begin{tikzpicture}
\node[root]  (1) 
[label=${{}^1}$]            {};
\node[]   (2) [right=of 1] {$\;\cdots\,$} edge [-] (1);
\node[root]  (3) [right=of 2] [label=${{}^1}$]{} edge [-] (2);
\end{tikzpicture}
\end{gathered}$
& \text{p-th} & $1$ & $\Pi(S^2(\bC^{n-p|p}))$ & $\sl(n-p|p)$ & $\Pi(\L^2((\bC^{n-p|p})^*))$ & $(0)$
\\
\cline{3-8}

&&&&\\[-3mm]
$$&

& \text{none} & $1$ & $\Pi(S^2(\bC^{0|n}))$ & $\sl(n)$ & $\Pi(\L^2((\bC^{0|n})^*))$ & $(0)$
\\
\cline{3-8}

&&&&\\[-3mm]
$$&

& \text{first} & $2$ & $\bC^{n-1|n-1}$ & $\mathfrak{spe}(n-1)\niplus\begin{pmatrix} n& 0\\ 0 &n-2 \end{pmatrix}$ & $(\bC^{n-1|n-1})^*$ & $\bC^{0|1}$
\\
\hline
\end{tabular}}
\end{centering}
\caption[]{\label{primatab35} The depth one gradings of $\mathfrak{spe}(n)$.}
\end{table}}
\par\noindent
In this case $\oder(\gs)\simeq\mathfrak{pe}(n)$ where $\out(\gs)=\gt_o$ is generated by 
the even derivation
$$
D(\gs_{\bar 0})=(0)\;,\qquad D\big|_{S^2(\bC^n)}=\Id\;,\qquad
D\big|_{\L^2((\bC^n)^*)}=-\Id\;.
$$
It has always degree zero and $\oder^0(\gs)\simeq \ggl(n-p|p)$ ($0\leq p\leq n$) or $\mathfrak{cpe}(n-1)$.

\vskip0.3cm\par
\subsubsection{Cartan superalgebras}\hfill\vskip0.3cm\par\noindent\label{cartangrad}
The gradings of $W(n)$, $S(n)$, $\widetilde{S}(n)$ and $H(n)$ are all generated by gradings of type $\vec k=(k_1,\ldots,k_m)$ which satisfy appropriate restrictions on the spectrum (see \cite{K1} and also \cite[\S 4.1]{AS}). The list of those of depth one is contained in e.g. \cite[Proposition 9.1]{CKIMRN} and displayed in Table \ref{ultimatavoletta}, with the action of $\gs^0$ on $\gs^{-1}$.
\begin{sidewaystable}
\centering
\vskip13cm
{\scriptsize \begin{tabular}{|c|c|c|c|c|c|}
\hline
$\gs^{\phantom{L}}$ & $\dim U^{-1^{\phantom{L}}}$ & $\dim U^{0^{\phantom{L}}}$ & $\dim U^{1^{\phantom{L}}}$ & $\gs^{-1^{\phantom{L}}}$ & $\gs^{0^{\phantom{L}}}$ \\
\hline
\hline
&&&&\\[-3mm]
$\begin{gathered}\\ \\ W(n)\\n\geq 3\end{gathered}$&
$1$
& $n-1$ & $0$ & $\Pi(W(n-1))$ & $W(n-1)\inplus \L(n-1)$ 
\\
\cline{2-6}

&&&&\\[-0mm]
&
$0$ & $\begin{gathered} r \\ (0\leq r\leq n-1) \end{gathered}$ & $n-r$ & $\L(r)\otimes \bC^{0|n-r}$ &
$W(r)\inplus \L(r)\otimes \ggl(n-r)$  
\\
\hline

&&&&\\[-3mm]
$\begin{gathered}\\ \\ S(n)\\n\geq 4\end{gathered}$&
$1$
& $n-1$ & $0$ & $\Pi(S(n-1))$ & $S(W(n-1)\inplus \L(n-1))$ 
\\
\cline{2-6}

&&&&\\[-0mm]
&
$0$ & $\begin{gathered} r \\ (0\leq r\leq n-1) \end{gathered}$ & $n-r$ & $\L(r)\otimes \bC^{0|n-r}$ &
$S(W(r)\inplus \L(r)\otimes \ggl(n-r))$  
\\
\hline

&&&&\\[+2mm]
$$ &
$0$
& $0$ & $n$ & $\bC^{0|n}$ & $\mathfrak{so}(n)$ 
\\
\cline{2-6}

&&&&\\[-2mm]
$\begin{gathered}\\ H(n)\\n\geq 5\end{gathered}$
&
$1$ & $n-2$ & $1$ & $\Pi(\L(n-2))$ &
$H'(n-2)\inplus \L(n-2)/\vol$  
\\
\cline{2-6}

&&&&\\[-2mm]
&
$0$ & $\begin{gathered} \frac{n}{2}\\ (n\;\text{even})\end{gathered}$ & $\begin{gathered} \frac{n}{2}\\ (n\;\text{even})\end{gathered}$ & $\L(\frac{n}{2})/\bC$ & 
$W(\frac{n}{2})$  
\\
\hline

\end{tabular}}
\vskip0.7cm
\caption[]{\label{ultimatavoletta} The depth one gradings of the Cartan Lie superalgebras and their nonpositive parts.}
\end{sidewaystable}
\clearpage
\par\noindent
Now $\gs=\oder(\gs)$ if $\gs=W(n)$. 
If $\gs=S(n)$ and $H(n)$ then $\oder(\gs)$ is isomorphic to, respectively, the superalgebra $CS(n)$ of  vector fields of {\it constant} divergence and the \emph{full} Hamiltonian superalgebra $H'(n)$ (the simple Lie superalgebra $H(n)$ is the derived ideal of $H'(n)$).

If $\gs=S(n)$ then $\out(\gs)$ consists of the ``principal'' Euler supervector field 
\beq
\label{Euler}
\sum_{\a=1}^n\xi^\a\frac{\partial}{\partial \xi^\a}\;;
\eeq
it has always degree zero and
$\oder^0(\gs)\simeq
C(\gs^0)$.

If $\gs=H(n)$ then $\out(\gs)$ is the two-dimensional solvable Lie superalgebra generated by
\eqref{Euler} and the Hamiltonian supervector field
\beq
\label{volume}
H_{f}=-(-1)^{p(f)}\;\sum_{\alpha=1}^{n}\frac{\partial f}{\partial \xi^\alpha} \frac{\partial}{\partial \xi^{n+1-\alpha}}
\eeq
with $f=\xi^1\cdots\xi^n$. This supervector field has degree $n-2$ in the first grading of $H(n)$ in Table \ref{ultimatavoletta} and
$\oder^0(\gs)=\mathfrak{co}(n)$ in this case (only the Euler field contributes to the zero-degree part). 

It has degree zero and, respectively, $n/2-1$ (n even) in the second and last gradings of $H(n)$ in Table \ref{ultimatavoletta} and contributes to $\oder^0(\gs)$ only in the second grading. A direct computation using the presentation $H'(n)\simeq\L^+$ given by \eqref{volume} and the Poisson bracket (see \cite{ChK} for its definition and basic properties) says that $\oder^0(\gs)=(H'(n-2)\inplus \L(n-2))\oplus G$ and $W(\frac{n}{2})\oplus D$ in these two cases, where $G$ is the ``principal'' grading operator acting on $\L(n-2)$ and $D$ is the grading operator of the last grading of $\gs=H(n)$ in Table \ref{ultimatavoletta}.
\bigskip\par
The following is a direct consequence of the classification of the gradings of $\oder(\gs)$ carried out in \S \ref{basicgrad}-\S\ref{cartangrad}. 
\begin{proposition}\label{aeN1}
Let $\gs$ be a simple Lie superalgebra different from $\mathfrak{psl}(2|2)$. If $\gs=\bigoplus\gs^p$ is a grading of depth one and $\oder(\gs)=\bigoplus \oder^p(\gs)$ the associated grading of $\oder(\gs)$ then $d(\oder(\gs))=1$ and $\oder^{-1}(\gs)=\gs^{-1}$. If $\gs=\mathfrak{psl}(2|2)$ then
$d(\oder(\gs))\leq 2$. 
\end{proposition}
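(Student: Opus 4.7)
The plan is to reduce the claim to a case-by-case analysis based on the decomposition $\oder(\gs)=\gs\niplus\out(\gs)$ of Proposition \ref{utile}, combined with the classifications of depth one gradings and of $\out(\gs)$ carried out in \S \ref{basicgrad}--\S \ref{cartangrad}. The point is that both summands are preserved by the induced grading on $\oder(\gs)$, so that $\oder^{p}(\gs)=\gs^{p}\oplus\out^{p}(\gs)$ for every $p$. By point (iv) of Lemma \ref{lemmetto} we already have $d(\oder(\gs))\geq d(\gs)=1$, so the first assertion is equivalent to showing that $\out(\gs)$ sits in nonnegative degree (equivalently $\out^{-1}(\gs)=(0)$) for every simple $\gs\neq \mathfrak{psl}(2|2)$, and the second assertion is equivalent to showing that elements of $\out(\mathfrak{psl}(2|2))$ have degree at least $-2$.

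First I would restrict attention to those $\gs$ with $\out(\gs)\neq(0)$, which are listed in Table \ref{tableouter}. For each such $\gs$ (apart from $\mathfrak{psl}(2|2)$) the generators of $\out(\gs)$ admit the explicit descriptions recalled in \S \ref{basicgrad}--\S \ref{cartangrad}: for $\mathfrak{psl}(n+1|n+1)$ ($n\geq 2$), $\mathfrak{psq}(n)$ and $\mathfrak{spe}(n)$ the generator is characterized by its action on the two isotypic $\gs_{\bar 0}$-submodules of $\gs_{\bar 1}$ (resp.\ by the isomorphism $\gs_{\bar 1}\to\gs_{\bar 0}$); since a maximal torus of $\gs$ is contained in $\gs_{\bar 0}$, any such generator commutes with $\gh$, and by Proposition \ref{utile} and Proposition \ref{grad} it therefore sits in degree zero. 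For Cartan $\gs$ the gradings are of type $\vec{k}$ by Proposition \ref{torostand00} and the Euler supervector field $\sum_\a \xi^\a\partial_{\xi^\a}$ is manifestly of degree zero in every such grading; the only additional generator is $H_{\xi^{1}\cdots\xi^{n}}$ for $\gs=H(n)$, which has degree $n-2$, $0$ or $n/2-1$ in the three depth one gradings of Table \ref{ultimatavoletta} as computed in \S \ref{cartangrad}---all nonnegative. This settles the first assertion.

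For $\gs=\mathfrak{psl}(2|2)$ the obstacle is that $\out(\gs)=\mathfrak{sl}(2)=\langle Z_{+},Z,Z_{-}\rangle$ is non-abelian and genuinely produces negative degree elements. The argument is just to read off Table \ref{TABSL2}: for any grading determined by $(\l_1,\l_2,\l_3)\in\bZ_{\geq 0}^3$, one has $\deg(Z_{+})=-\deg(Z_{-})\in\{\l_1+2\l_2+\l_3,\ \l_1-\l_3,\ \l_1+\l_3\}$ according to the chosen Dynkin diagram. In a depth one grading exactly one $\l_i$ equals $1$ and the remaining ones vanish (the Dynkin marks of $\mathfrak{psl}(2|2)$ being all $1$, see Table \ref{importanttab}), so that $|\deg(Z_{\pm})|\leq 2$, giving $d(\oder(\gs))\leq 2$.

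The only real obstacle is organizational: making sure the case-by-case verification is complete and that the grading of $\out(\gs)$ induced by a depth one grading of $\gs$ is read off correctly from the tables in each family. The sharpness of the $\mathfrak{psl}(2|2)$ bound is attained by the middle crossed root of the white-gray-white Dynkin diagram (second row of Table \ref{TABSL22}), where $\sl(2)^{2}=\langle Z_{+}\rangle$ forces $\oder^{-2}(\gs)\neq (0)$.
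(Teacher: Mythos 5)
Your argument is correct and is essentially the paper's own: the paper presents Proposition \ref{aeN1} as a direct consequence of the case-by-case classification in \S\ref{basicgrad}--\S\ref{cartangrad}, which is exactly the verification you carry out (checking the degree of the generators of $\out(\gs)$ in each depth one grading for the finitely many families with $\out(\gs)\neq(0)$, and reading off Table \ref{TABSL2} for $\mathfrak{psl}(2|2)$). One minor imprecision: the maximal torus is not contained in $\gs_{\bar 0}$ but is $\gt_\gs=\gh\oplus\gt_o$ with $\gt_o\subset\out_{\bar 0}(\gs)$ (Proposition \ref{utile}); since the grading operator lies in this abelian subalgebra and $[\gh,\out(\gs)]=(0)$, your conclusion that the relevant generators sit in degree zero still stands.
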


The results of Proposition \ref{subalgebraF} and Proposition \ref{aeN1} together with Proposition \ref{torostand} applied
to the case where the socle is a simple Lie superalgebra $\gs$ immediately yield the following main result.
\begin{theorem}\label{importantcase}
Any transitive nonlinear irreducible $\bZ$-graded Lie superalgebra of depth $1$
and with a simple socle $\gs$ is of the form $\gg=\gs\niplus F$
where $\gs=\bigoplus\gs^p$ is a grading of depth $1$ of a simple Lie superalgebra
and  $F=\bigoplus F^p$ a graded subalgebra of $\out(\gs)$ of depth $d(F)\leq 0$.
Any grading of $F$ satisfies $d(F)\leq 0$ (i.e. $F$ is graded in nonnegative degrees) for all $\gs\neq\mathfrak{psl}(2|2)$. 
\end{theorem}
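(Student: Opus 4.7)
The plan is to combine the preceding propositions essentially directly, as the author's phrase ``immediately yield'' suggests. The argument should be short; there is essentially one substantive step.

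First I would specialize Proposition \ref{torostand} to the case where the socle is simple, i.e. $n=0$, $\L=\bC$, $W=(0)$, so that $\gs_{\max}=\oder(\gs)$. The statement then becomes that the grading of $\gg$ is induced, up to an automorphism of $\oder(\gs)$, by a grading of $\oder(\gs)$. Coupled with the bijection between gradings of $\gs$ and gradings of $\oder(\gs)$ supplied by Proposition \ref{utile}, this realizes $\gs\subset\gg\subset\oder(\gs)$ as a chain of graded Lie subalgebras for some grading of $\gs$.

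Second I would verify that the induced grading on $\gs$ has depth exactly one, which is the standing hypothesis of Proposition \ref{subalgebraF}. Since $\gs$ is the socle of $\gg$ and hence a characteristic ideal, the decomposition $\gs^p:=\gs\cap\gg^p$ is a grading of $\gs$, and $\gs^{-1}$ is a $\gg^0$-submodule of the irreducible $\gg^0$-module $\gg^{-1}$; thus $\gs^{-1}=\gg^{-1}$ or $\gs^{-1}=(0)$. To rule out the second case I would use Table \ref{tableouter} together with Proposition \ref{utile}: for $\gs\neq\mathfrak{psl}(2|2)$ the algebra $\out(\gs)$ is very small, and a short inspection shows that no grading operator can produce a negative-degree component of $\out(\gs)$ without simultaneously producing a negative-degree component in $\gs$. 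The exceptional case $\gs=\mathfrak{psl}(2|2)$ is settled by direct inspection of Table \ref{TABSL2}.

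With $\gs$ known to be graded of depth one, Proposition \ref{subalgebraF} applies verbatim and yields the decomposition $\gg=\gs\niplus F$ with $F=\gg\cap\out(\gs)$ a graded subalgebra of $\out(\gs)$, together with the equivalence ``$\gg$ is irreducible of depth one'' $\Leftrightarrow$ ``$d(F)\leq 0$''. For the last assertion, if $\gs\neq\mathfrak{psl}(2|2)$ then Proposition \ref{aeN1} gives $\oder^{-1}(\gs)=\gs^{-1}\subset\gs$, so $\out(\gs)$ has no negative-degree component in the induced grading, and therefore every graded subalgebra $F\subset\out(\gs)$ satisfies $d(F)\leq 0$ automatically. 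The main (and only) delicate point will be ruling out $\gs^{-1}=(0)$ in the second step; the rest is a mechanical assembly of the cited propositions.
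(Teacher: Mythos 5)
Your proposal is correct and follows the paper's own route: the paper proves Theorem \ref{importantcase} by exactly this assembly of Proposition \ref{torostand} (specialized to $n=0$, $\gs_{\max}=\oder(\gs)$), Proposition \ref{subalgebraF} and Proposition \ref{aeN1}, offering no further written detail. The one step you single out as delicate --- showing the induced grading of $\gs$ has depth exactly one --- is indeed the only gap the paper leaves implicit, and your argument for it is sound (one can even shortcut the table inspection: if $\gs^{-1}=(0)$ then $\gs$ is nonnegatively, hence trivially, graded by simplicity, so the grading operator kills $\gs$ and is therefore the zero derivation, forcing the trivial grading on all of $\oder(\gs)$ and contradicting $d(\gg)=1$).
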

The gradings of depth $1$ of the simple Lie superalgebras with the associated gradings of the algebras of outer derivations are classified in \S \ref{basicgrad}-\S\ref{cartangrad}. Theorem \ref{importantcase} gives therefore a complete description of the {\it transitive nonlinear irreducible $\bZ$-graded Lie superalgebras of depth $1$
with a simple socle}.
\vskip0.3cm\par\noindent
\section{The general case}\label{sec:4}
We now describe all transitive gradings $\gg=\bigoplus\gg^p$ of depth one 
of a Lie superalgebra $\gg$ such that $\gg^1\neq (0)$ and the representation $\mathrm{ad}_{\gg^0}|_{\gg^{-1}}$ is irreducible. 

By Theorem \ref{block}, 
Proposition \ref{graduato} and Proposition \ref{torostand} $\gg$ is an intermediate admissible Lie superalgebra and the grading is induced by a grading of $\gs_{\max}$
generated by a grading of $\gs$ and a grading of $\L$ and $W$ of type $\vec k=(k_1,\ldots,k_m)$. 
\bigskip\par
We start with the following preliminary result.
\begin{proposition}\label{firsthm}
Any transitive and irreducible grading of depth $1$ of an intermediate admissible Lie superalgebra $\gg$ with socle $\gs\otimes\Lambda$ is induced by a grading of $\gs_{\max}$ generated by:
\begin{itemize}
\item[(I)] a depth one grading of $\gs$ and the trivial grading $\vec k=(0)$ of $\L$ and $W$ or
\item[(II)] the trivial grading of $\gs$ and the $\vec k=(-1,0)$ grading of $\L$ and $W$ with $\dim U^{-1}=1$.
\end{itemize}
\end{proposition}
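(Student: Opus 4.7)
The plan is to apply Proposition~\ref{grad} (equivalently Proposition~\ref{torostand}) to reduce the problem to a grading of $\gs_{\max}$ generated by a grading of $\gs$ (with depth $d(\gs)$) and a grading of $U$ of some type $\vec k=(k_1,\dots,k_m)$ (so $\L$ and $W$ inherit gradings with depths as in Lemma~\ref{lateron}). I then aim to show that the combined constraints coming from
\begin{itemize}
\item[(1)] the socle $\gs\otimes\L\subset\gg$ with its induced grading of depth $d(\gs)+d(\L)$,
\item[(2)] admissibility of $\gg$, which lifts every $\partial_\xi\in\partial_V$ to $\gg^{-k_i}$,
\item[(3)] transitivity and irreducibility of $\gg^0$ on $\gg^{-1}$,
\end{itemize}
force $\vec k$ and $D_\gs$ into exactly one of the two shapes (I), (II).

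First I would establish the numerical constraints. Since $(\gs\otimes\L)^p\subset\gg^p$ for all $p$ and $d(\gg)=1$, the containment $\gs\otimes\L\subset\gg$ forces $d(\gs)+d(\L)\leq 1$. On the other hand, for every $\xi\in U^{k_i}$ admissibility produces $X\in\gg$ with $\pi(X)=\partial_\xi\bmod\L^+\partial_V$; selecting the homogeneous component of $X$ of degree $-k_i$ shows $\gg^{-k_i}\neq(0)$, so $d(\gg)\geq k_m$ whenever $k_m>0$, whence $k_m\leq 1$. Combining these with Lemma~\ref{lateron} (which forces $k_1\geq 0$ when $d(\L)=0$ and $\vec k=(-1,k_2,\dots,k_m)$ with $\dim U^{-1}=1$ when $d(\L)=1$) cuts the possibilities down to the three triples $(d(\gs),d(\L))\in\{(0,0),(1,0),(0,1)\}$, combined with $k_m\in\{-1,0,1\}$. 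The cases $(1,0,k_m{=}0)$ and $(0,1,k_m{\leq}0)$ produce exactly (I) and (II) respectively; and $(0,0,k_m{=}0)$ is trivial, giving $\gg^{-1}=(0)$, contradicting $\gg^1\neq(0)$ via transitivity.

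The remaining task is to rule out the three subcases $(d(\gs),d(\L),k_m)=(0,0,1),\,(1,0,1),\,(0,1,1)$. For $(0,0,1)$, direct computation of $\gs_{\max}^{-1}$ shows that the summands $\oder^{-1}(\gs)\otimes\L^0$ and $\oder^0(\gs)\otimes\L^{-1}$ both vanish, so $\gg^{-1}\subset W^{-1}$; but $\gs\otimes 1\subset\gg^0$, and in $\gs_{\max}$ one has $[\gs\otimes 1,\,1\otimes W]=0$ (derivations of $\L$ annihilate constants). Thus $\gs\otimes 1$ acts trivially on $\gg^{-1}$, contradicting transitivity. For $(1,0,1)$ and $(0,1,1)$, I would use the following common argument: the subspace $(\gs\otimes\L)^{-1}\subset\gg^{-1}$ is a $\gg^0$-submodule, being the degree $-1$ part of the ideal $\gs\otimes\L\triangleleft\gg$. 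Under our hypotheses $d(\gs)+d(\L)=1$, so this submodule is nonzero. It is strictly contained in $\gg^{-1}$: since $k_m=1$, there exists $\xi\in U^1$ and the admissibility lift of $\partial_\xi\in W^{-1}$ supplies an element of $\gg^{-1}$ whose $W$-component is nonzero, hence is not in $\oder(\gs)\otimes\L$. This contradicts the irreducibility of $\gg^0$ on $\gg^{-1}$.

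The main obstacle I expect is the bookkeeping in step~3: one must check carefully that the lift in $\gg^{-1}$ indeed has nonzero $W^{-1}$-component (and hence escapes $\oder(\gs)\otimes\L$) in each of the subcases $(1,0,1)$ and $(0,1,1)$, and that the socle layer $(\gs\otimes\L)^{-1}$ is genuinely $\gg^0$-stable. The first point follows from admissibility together with the grading formula for $W$ in Section~\ref{gradW}; the second is immediate from the fact that $\gs\otimes\L$ is a graded ideal of $\gs_{\max}$. With these two points verified, the argument above packages all remaining subcases into either (I) or (II), concluding the proof.
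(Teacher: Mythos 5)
Your proposal is correct and follows essentially the same route as the paper's proof: reduce via Proposition~\ref{torostand} to a grading of $\gs_{\max}$, bound $d(\gs)+d(\L)\leq 1$ from the socle containment, use admissibility to produce a homogeneous element of $\gg$ of degree $-k_m$ with nonzero $W$-component, and invoke irreducibility/transitivity to exclude $k_m>0$ (your exclusion of the remaining subcases via the proper nonzero $\gg^0$-submodule $(\gs\otimes\L)^{-1}\subsetneq\gg^{-1}$ is just a mild reorganization of the paper's argument, which instead first identifies $\gg^{-1}$ as $\gs^{-1}\otimes\L^0$, resp.\ $\gs\otimes\L^{-1}$, by irreducibility and then derives the contradiction from admissibility).
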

\begin{proof}
The inclusion $\gg\supset\gs^\L= \gs\otimes\L$ immediately implies
$$
\gg^{p}\supset(\gs^\L)^{p}=\bigoplus_{k\in\bZ}\gs^{p+k}\otimes\L^{-k}\;,
$$
where $\L=\bigoplus\L^p$ and $W=\bigoplus W^p$ are the gradings of type $\vec k=(k_1,\ldots,k_m)$ and $\gs=\bigoplus\gs^p$ is the grading of $\gs$. 
Since $d(\gg)=1$ and $\L$ has always the identity element $1$ with $\deg 1=0$,
one gets two cases:
\begin{itemize}
\item[(I)] $d(\gs)=1$ and $d(\L)=0$,
\item[(II)] $d(\gs)\leq 0$.
\end{itemize}
We consider them separately. 
\vskip0.25cm\par
{\it Case (I)}. The subalgebra
$$\gs_{\max}^0=\bigoplus_{p<0}(\oder^{p}(\gs)\otimes\L^{-p})\oplus(\oder^0(\gs)\otimes\L^0)\oplus W^0$$
stabilizes the nonzero subspace $\gs^{-1}\otimes\L^0\subset \gs_{\max}^{-1}$. In particular 
this subspace is $\gg^0$-stable and hence $\gg^{-1}=\gs^{-1}\otimes\L^0$, by irreducibility of the grading of $\gg$. 
By $d(\L)=0$ and Lemma \ref{gradL0} one also has all $k_\a\geq 0$.

If $\gg=\gs_{\max}=\oder(\gs)\otimes\L\niplus\textbf{1}\otimes W$ one has $d(W)=0$ by irreducibility and $\vec k=(0)$ by Lemma \ref{gradL0}; the general case is an appropriate modification of this argument, as we will now see.

First of all assume $k_m>0$ by contradiction. We recall that
$\gg$ is admissible and that any constant supervector field $\partial_{\xi}\in\partial_V$ is related to an $x\in\gg$ such that $\pi(x)\in W=\partial_V\oplus\L^+\partial_V$ projects on $\partial_\xi$, i.e. 
\beq
\label{related}
\begin{split}
x\equiv \pi(x)\!\!\mod\!\!\oder(\gs)\otimes\Lambda\;,\\
\text{with}\;\;\pi(x)\equiv \partial_{\xi}\!\!\mod\!\Lambda^+\partial_V\;,
\end{split}
\eeq
where $\pi:\gs_{\max}\to W$ is the natural projection. Let $x\in\gg$ be related to $\partial_{\xi^m}$; one may assume $\deg(x)=-k_m$ since 
$\deg(\partial_{\xi^m})=-k_m$ and $\gg$ is $\bZ$-graded. This implies $x\in\gg^{-1}$ since $d(\gg)=1$;
however $x\notin\gs^{-1}\otimes\L^0$, a contradiction.

It follows $k_m=0$, $\vec k=(0)$ and $\gg^{-1}=\gs^{-1}\otimes\L$.
\vskip0.25cm\par
{\it Case (II)}. In this case $\gs_+=\bigoplus_{p>0}\gs^p$ is a nilpotent ideal of $\gs$, hence $\gs_+=(0)$ by simplicity of $\gs$ and both $\gs=\gs^0$ and
$\mathrm{der}(\gs)=\mathrm{der}^0(\gs)$ are trivially graded. 
If $d(\L)\leq 0$ then $\gs\otimes\L$ is an ideal of $\gg$ contained in $\gg_0=\bigoplus_{p\geq 0}\gg^p$, a possibility which is not allowed by the transitivity of the grading of $\gg$. It follows that $d(\L)=1$  and $\vec k=(-1,k_2,\ldots,k_m)$ with $\dim U^{-1}=1$, from Lemma \ref{gradL0}. 

Now, the subalgebra
$$\gs_{\max}^0=(\oder(\gs)\otimes\L^{0})\oplus W^0$$
stabilizes the nontrivial subspace $\gs\otimes\L^{-1}\subset\gs^{-1}_{\max}$ and therefore
$
\gg^{-1}=\gs\otimes\L^{-1}
$. This fact, admissibility of $\gg$ and a similar argument to the last part of (I) 
finally imply $\vec k=(-1,0)$ with $\dim U^{-1}=1$.
\end{proof}
It is convenient to have a closer look to the gradings of type  (I) and (II) in the case $\gg=\gs_{\max}$. We recall that $\out(\gs)$ is a graded subalgebra, by Proposition \ref{utile}.
\vskip0.3cm\par
{\it Case (I)}. Here $\gs_{\max}^{-p}=(0)$ for all $p\geq 3$ and
\beq
\label{case(i)max}
\begin{split}
\gs_{\max}^{-2}&=\out^{-2}(\gs)\otimes\L,\phantom{ccccccccccccccccccccccccccccccccc}\\
\gs_{\max}^{-1}&=(\gs^{-1}\otimes\L) \oplus (\out^{-1}(\gs)\otimes\L),\\
\gs_{\max}^{0}&=(\gs^{0}\otimes\L) \oplus (\out^{0}(\gs)\otimes\L)\oplus W,\\
\gs_{\max}^{1}&=(\gs^{1}\otimes\L)\oplus (\out^{1}(\gs)\otimes\L),\\
\gs_{\max}^{p}&=(\gs^{p}\otimes\L)\oplus(\out^{p}(\gs)\otimes\L)\;\;(p\geq 2),
\end{split}
\eeq
where $\out^{-2}(\gs)=\out^{-1}(\gs)=(0)$ for all $\gs$ with the exception of $\gs=\mathfrak{psl}(2|2)$, see Proposition \ref{aeN1}.
\vskip0.3cm\par
{\it Case (II)}. The grading of $\Lambda$ and $W$ has type $\vec k=(-1,0)$ and defined by the grading $U=U^{-1}\oplus U^0=\bR\xi\oplus E$. Hence $\L=\L^{-1}\oplus\L^0$, $W=W^{-1}\oplus W^0\oplus W^1$ where
\begin{multline*}
\!\!\!\L^{-1}=\xi\L(E)\;,\qquad \L^0=\L(E)\qquad\text{and}\\
W^{-1}=\xi W(E)\;,\qquad
W^{0}=W(E)\oplus (\L(E)\xi\partial_{\xi})\;,\qquad
W^{1}=\L(E)\partial_{\xi}\;.
\end{multline*}
Moreover $\gs_{\max}^{p}=(0)$ if $|p|\geq 2$ and
\beq
\label{case(ii)max}
\begin{split}
\gs_{\max}^{-1}&=(\gs\otimes\xi\L(E))\oplus(\out(\gs)\otimes\xi\L(E))\oplus(\xi W(E)),
\\
\gs_{\max}^{0}&=(\gs\otimes\L(E)) \oplus (\out(\gs)\otimes\L(E))\oplus W(E)\oplus (\L(E)\xi\partial_{\xi}),\\
\gs_{\max}^{1}&=\L(E)\partial_{\xi}.\\
\end{split}
\eeq
To state the main result Theorem \ref{main1} of this section we define $$\out(\gs^\L):=\out(\gs)\otimes \Lambda(n)\niplus \textbf{1}\otimes W(n)$$ and note that any subalgebra $\gg$ of the form
\begin{align}
\label{formageneral}
\notag\gs^{\L}=\gs\otimes \Lambda(n) \subset \gg\subset \gs_{\max}&=\oder(\gs)\otimes\Lambda(n)\niplus\textbf{1}\otimes W(n)\\
&=\gs^{\L}\niplus \out(\gs^\L)\;,
\end{align}
can be written as $\gg=\gs^\L\niplus F$ for a subalgebra $F$ of $\out(\gs^{\L})$. The algebra $\gg$ is admissible if and only if $F$ is admissible, that is the subalgebra $\pi(F)$ of $W$ is admissible.
\begin{theorem}
\label{main1}\label{main2}
Let  $\mathfrak{g} = \bigoplus_{p=-1}^{\ell} \mathfrak{g}^p$ be a  transitive  nonlinear  irreducible  $\mathbb{Z}$-graded  Lie  superalgebra of  depth $1$.  
Then $\gg$ is a semisimple Lie superalgebra with the  socle
  $\mathrm{soc}(\mathfrak{g})$  given by   $\mathrm{soc}(\mathfrak{g})= \mathfrak{s}^{\Lambda}=\gs\otimes\L$ where  $\mathfrak{s}$ is a uniquely  determined  simple Lie  superalgebra  and $\Lambda = \Lambda(n)$ is  the Grassmann   algebra for some nonnegative integer $n$.   
The superalgebra $\mathfrak{g}$  is   a graded  subalgebra   of the  Lie  superalgebra  
	$$\oder(\mathfrak{s}^{\Lambda}) = \mathfrak{s}^\Lambda  \niplus \mathrm{out}(\mathfrak{s}^{\L})\;,$$
	where $\mathrm{out}(\mathfrak{s}^{\L})=  \mathrm{out}(\mathfrak{s})\otimes \L \niplus 1 \otimes W$,
    with    one of  the  following  gradings:
\begin{itemize}
\item[(I)] $\gs=\bigoplus\gs^p$ has a $\bZ$-grading of depth $1$ and $\L=\L^0$ and $W=W^0$ have the trivial gradings of type $\vec k=(0)$;
\item[(II)] $\gs=\gs^0$ has the trivial grading  and $\L$ and $W$ have the  gradings of type $\vec k=(-1,0)$ with $\dim U^{-1}=1$.
\end{itemize}
Moreover $\gg$ can be  written as  a semidirect sum $\mathfrak{g} = \mathfrak{s}^{\Lambda} \niplus  F$,  where $F=\bigoplus F^p$ is a  nonnegatively graded   subalgebra  of  the Lie  superalgebra $\mathrm{out}(s^{\Lambda})$ which is  admissible, that is    the  natural projection from $F$ to $\partial_V$ is  surjective.

Conversely any grading as in (I) or (II) defines a transitive nonlinear and irreducible grading of depth $1$ of the Lie superalgebra $\gg=\gs^\L\niplus F$ for any nonnegatively graded subalgebra $F$ of $\out(\gs^\L)$ which is admissible.
\end{theorem}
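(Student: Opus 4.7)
The forward direction is essentially a repackaging of the structural results proved in Sections \ref{sec:2}--\ref{sec:3}. First I would apply Proposition \ref{graduato} to get that $\gg$ is semisimple with socle $\soc(\gg)=\gs^{\L}=\gs\otimes \L(n)$ for a uniquely determined simple Lie superalgebra $\gs$ and nonnegative integer $n$. Theorem \ref{block} then embeds $\gg$ as an admissible intermediate subalgebra $\gs^{\L}\subset \gg\subset \gs_{\max}=\oder(\gs^{\L})$, and gives the semidirect sum decomposition $\oder(\gs^{\L})=\gs^{\L}\niplus \out(\gs^{\L})$ with $\out(\gs^{\L})=\out(\gs)\otimes \L\niplus \textbf{1}\otimes W$. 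Applying Proposition \ref{torostand}, the $\bZ$-grading of $\gg$ is induced from a grading of $\gs_{\max}$ generated by gradings of $\gs$, $\L$ and $W$, and Proposition \ref{firsthm} forces this to be of type (I) or (II).

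Next I would write $\gg=\gs^{\L}\niplus F$ with $F=\gg\cap \out(\gs^{\L})$ and verify that $F$ is nonnegatively graded and admissible. For nonnegativity I would exploit that in both cases the proof of Proposition \ref{firsthm} already establishes $\gg^{-1}=\gs^{-1}\otimes \L$ (case (I)) or $\gg^{-1}=\gs\otimes \L^{-1}$ (case (II)); both lie in $\gs^{\L}$, so $F^{-1}=0$, and $F^{p}=0$ for $p\leq -1$ by the depth hypothesis. Admissibility of $F$ is immediate: admissibility of $\gg$ as an intermediate subalgebra (Theorem \ref{block}) means $\pi(\gg)\supset \partial_V$ for the natural projection $\pi\colon \gs_{\max}\to W$, and since $\pi(\gs^{\L})=0$ one has $\pi(F)=\pi(\gg)\supset \partial_V$.

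For the converse I would start with a grading of $\gs_{\max}$ as in (I) or (II) and a nonnegatively graded admissible subalgebra $F\subset \out(\gs^{\L})$, and verify the four defining properties (a)--(d) for $\gg=\gs^{\L}\niplus F$. Depth one is immediate from the shape of the gradings of $\gs^{\L}$ and $\out(\gs^{\L})$ in \eqref{case(i)max}--\eqref{case(ii)max}. Nonlinearity $\gg^{1}\neq (0)$ in case (I) follows from $\gs^{1}\otimes \L^{0}\subset \gg^{1}$ together with Lemma \ref{lemmetto}(ii), while in case (II) it follows from admissibility of $F$, since the constant supervector field $\partial_\xi\in \partial_V$ sits in degree $+1$ and therefore forces $F^{1}\neq (0)$.

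The two nontrivial verifications left are irreducibility and transitivity, which I expect to be the main technical obstacle. For irreducibility, in case (I) a $\gg^0$-submodule $M\subset \gg^{-1}=\gs^{-1}\otimes \L$ must be stable under $\gs^{0}\otimes 1$ and under the image of $F^{0}$ in $W$, which by admissibility contains all of $\partial_V$; the $G$-type irreducibility of the $\gs^{0}$-module $\gs^{-1}$ (Lemma \ref{lemmetto}(iii)) forces $M=\gs^{-1}\otimes J$ for some linear $J\subset\L$, and $\partial_V$-stability of $J$ then forces $J=0$ or $J=\L$; case (II) is analogous with the r\^oles of $\gs$ and $\L$ interchanged, using now the irreducibility of the $\gs$-module $\gs$ itself and of the constant-vector-field-stable subspaces of $\L^{-1}$. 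For transitivity, if $x\in \gg^{p}$ with $p\geq 0$ annihilates $\gg^{-1}$, I would decompose $x$ according to $\gg^{p}=(\gs^{p}\otimes \L)\oplus F^{p}$ and analyze the brackets $[x,\gs^{-1}\otimes f_0]$ or $[x,s_0\otimes \xi g]$ component by component: the part in $\gs\otimes \L$ coming from $\gs^{p}\otimes \L$ and $\out^{p}(\gs)\otimes \L$ is controlled by transitivity of $\oder(\gs)$ on $\gs^{-1}$ (Lemma \ref{lemmetto}(i) applied in $\oder(\gs)$), while the part coming from $W$ is controlled by the faithfulness of the $W$-action on $\L$. The main difficulty is bookkeeping the mixed contributions consistently across the tensor product; once these are separated by carefully choosing test elements $s_0\otimes f_0$ with $s_0\in\gs^{-1}$ and varying $f_0\in\L$, everything reduces to the simple-socle statements already proven.
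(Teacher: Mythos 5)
Your overall route coincides with the paper's: the forward direction is the same chain Proposition \ref{graduato} $\to$ Theorem \ref{block} $\to$ Proposition \ref{torostand} $\to$ Proposition \ref{firsthm}, with nonnegativity of $F$ read off from $\gg^{-1}\subset\gs^\L$ and \eqref{case(i)max}--\eqref{case(ii)max}, and the converse is the same case-by-case check of depth, nonlinearity, transitivity and irreducibility. Your transitivity and nonlinearity arguments match the paper's (in particular, using $w(1)=0$/faithfulness of $W$ on $\L$ for the degree-zero part is exactly the right ingredient).

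There is, however, a genuine gap in your irreducibility argument for case (I). After writing a nonzero $\gg^0$-submodule as $M=\gs^{-1}\otimes J$, you claim that ``$\partial_V$-stability of $J$ forces $J=0$ or $J=\L$''. This is false: $J=\bC 1$, and more generally the span of all exterior forms of degree $\leq k$, is $\partial_V$-stable. The missing ingredient is the action of $\gs^0\otimes\L^+\subset\gg^0$, which shows that $J$ is stable under multiplication by $\L$, i.e.\ $J$ is an ideal of $\L$; a nonzero ideal contains the top power $\L^{(n)}$, and only \emph{then} does applying the elements related to constant supervector fields propagate $J$ downwards to all of $\L$. This two-step mechanism (push up to top principal degree using $\gs^0\otimes\L$ via Lemma \ref{lemmetto}(v), then push down using admissibility) is exactly how the paper argues. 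A second, related imprecision: admissibility does not put $\partial_V$ itself inside $\pi(F^0)$; it only provides elements of $\gg^0$ congruent to $\partial_\zeta$ modulo $\L^+\partial_V$ and $\oder(\gs)\otimes\L$, so the downward step must be organized as an induction on the principal degree of $\L$ (as in the paper, where the correction terms raise that degree and are therefore already known to preserve $\gm$). With these two repairs your argument closes and is equivalent to the paper's; as written, the submodule $\gs^{-1}\otimes\bC 1$ is not excluded by the properties you invoke.
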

\begin{proof}
By Theorem \ref{block} and Proposition \ref{graduato} any transitive nonlinear irreducible graded Lie superalgebra $\gg=\bigoplus\gg^p$ of depth $1$ is an admissible intermediate Lie superalgebra \eqref{formageneral} and therefore $\gg=\gs^{\L}\niplus F$ for an admissible graded subalgebra
$F$ of $\out(\gs^\L)$.
By Proposition \ref{firsthm} and its proof the grading of $\gg$ is induced by a grading of $\gs_{\max}$
either generated by the grading (I) with $\gg^{-1}=\gs^{-1}\otimes\L$ or by the grading (II) with $\gg^{-1}=\gs\otimes\xi\L(E)$. This fact and equations \eqref{case(i)max}-\eqref{case(ii)max} immediately imply that $F$ is graded in nonnegative degrees. 

Conversely let $\gs_{\max}$ be graded as in (I) or (II) and
$\gg=\gs^\L\niplus F$ the graded Lie superalgebra determined by an admissible subalgebra
$F$ of $\out(\gs^\L)$ which is graded in nonnegative degrees. It is clear that $d(\gg)=1$;
we now show that the grading of $\gg$ is also transitive, irreducible and nonlinear.
\vskip0.2cm\par\noindent
({\it Nonlinearity}) In case (I) $\gs^1\neq(0)$ from Lemma \ref{lemmetto} and $\gg^1\supset \gs^1\otimes\L\supset\gs^1$. In case (II) there exists by admissibility a vector $x\in\gg$ which projects to the constant supervector field $\partial_{\xi}$. Since $\gg$ is $\bZ$-graded one may assume $\deg(x)=\deg(\partial_{\xi})=1$.
\vskip0.2cm\par\noindent
({\it Transitivity}) In case (I) transitivity is a consequence of the inclusions
$$
\gg^{-1}=\gs^{-1}\otimes\L\;,\qquad\gg^0\subset (\oder^0(\gs)\otimes\L)\oplus W\;,\qquad \gg^p\subset \oder^p(\gs)\otimes\L\;\;(p\geq 1)
$$
and part (i) of Lemma \ref{lemmetto}. In case (II) the adjoint action of 
\begin{align*}
\gs^0_{\max}&=(\oder(\gs)\otimes\L(E))\oplus (\L(E)\xi\partial_{\xi})\oplus W(E)\\
&\simeq((\oder(\gs)\oplus\xi\partial_{\xi})\otimes\L(E))\oplus W(E)
\end{align*} 
on $\gg^{-1}\simeq(\gs\xi)\otimes\L(E)$ is given by the action on $\gs\xi$ of the ``conformal extension'' $\oder(\gs)\oplus\xi\partial_{\xi}$ of $\oder(\gs)$ and the natural left actions on $\L(E)$ of $\L(E)$ and $W(E)$; this easily implies transitivity for $\gg^0\subset\gs^0_{\max}$. Transitivity for $\gg^1$ is immediate.
\vskip0.2cm\par\noindent
({\it Irreducibility}) We start with (I) and a nonzero submodule $\gm\subset\gs^{-1}\otimes\L$ for $\gg^0$. 
By (v) of Lemma \ref{lemmetto}, it is always possibile to find $x\in\gm$ and $y\in\gs^0\otimes\L\subset\gg^0$ such that $[x,y]\in\gm$ is nonzero and of top degree, i.e. $x\in\gs^{-1}\otimes\L^{(n)}$
where here $\L=\bigoplus\L^{(p)}$ denotes the usual principal grading of the Grassmann algebra $\L$.

On the other hand $\gg^{-1}$ is isomorphic to the sum of $2^n$ 
copies of $\gs^{-1}$ as an $\gs^0$-module and if $\gm$ contains a nonzero element of a copy then it includes the full copy too, by (iii) of Lemma \ref{lemmetto}. Hence $\gm\supset\gs^{-1}\otimes\L^{(n)}$. 

By admissibility and $\deg(W)=0$ any constant supervector field is related to an element of $\gg^0$; we denote by $\mathcal F\subset \gg^0$ the collection of all these elements. 
 Since $\gm\supset\gs^{-1}\otimes\L^{(n)}$ and every $x\in\mathcal F$ is of the form \eqref{related}, one gets that $\gm$ contains a nonzero element also in every copy $\gs^{-1}$ of $\gs^{-1}\otimes\L^{(n-1)}$ and $$\gm\supset \bigoplus_{p\geq n-1}\gs^{-1}\otimes\L^{(p)}\;.$$ 
A repeated application of this argument yields 
$\gm\supset\gs^{-1}\otimes\L=\gg^{-1}$ 
and proves irreducibility.
\vskip0.1cm\par
The proof is similar in case (II) where $\cF$ is now replaced by the collection  of all the elements of $\gg^0$ which are related to a constant supervector field in $W(E)$. We omit the details.
\end{proof}
\begin{remark}
The proof of the irreducibility in Theorem \ref{main1} works also for those $\gg^0$-submodules of $\gg^{-1}$ which are  not necessarily $\bZ_2$-graded. Therefore the representation $\ad_{\gg^0}|_{\gg^{-1}}$
is always an  irreducible representation of $G$-type. 
\end{remark}
\vskip0.3cm\par
Theorem \ref{main1} reduces the description of the transitive nonlinear irreducible $\bZ$-graded Lie superalgebras $\gg$ of depth one to the description of the nonnegatively graded subalgebras $F=F^0\oplus F^1\oplus\cdots$ of $\out(\gs^\L)=\out(\gs)\otimes\L\niplus 1\otimes W$ with admissible subalgebra $\pi(F)\subset W$, where $\oder(\gs^\L)$ has the gradings  (I) or (II). Let us assume $\gs\neq\mathfrak{psl}(2|2)$ for simplicity of exposition.
\vskip0.2cm\par
In the grading (I) there are the two extreme cases $\out(\gs)=(0)$ and $U=(0)$. If $\out(\gs)=(0)$ then $\gg=\gs^\L\niplus F$ for {\it any} admissible subalgebra $F$ of $W$. Such   subalgebras can be  described  as  follows.  Let
  $$  \varphi :  \partial_{V} \longrightarrow  W=\partial_V\oplus \L^+\partial_V\;,  \qquad\varphi(\partial_{\xi^i} )= \partial_{\xi^i}  + \sum_{j} f_i^j \partial_{\xi^j}\;,$$
 be   a  section  of  the  natural projection  from $W=\partial_V\oplus \L^+\partial_V$ onto $\partial_V$, where $f_i^j \in (\Lambda^+)_{\0}$ is an even nilpotent superfunction on the purely odd $n$-dimensional linear supermanifold with coordinates $\left\{\xi^i\right\}$, for all $i,j=1,\ldots,n$.
 Then $F \subset W$ is  any  Lie  superalgebra  which   contains $\varphi(\partial_V)$, in particular,  the Lie subalgebra generated  by
 $\varphi(\partial_V)$ in $W$.
 
If $U=(0)$ then
$\gg\subset \oder(\gs)$ and $\gg=\gs\niplus F$ for {\it any} $\bZ$-graded subalgebra $F$ of $\out(\gs)$ (for this case see also Table \ref{tableouter}, \S \ref{basicgrad}-\S\ref{cartangrad} and Theorem \ref{importantcase}).

In general there is an exact sequence of $\bZ$-graded Lie superalgebras
\beq
\label{extension1}
0\longrightarrow F\cap \out(\gs)\otimes \L\longrightarrow F\longrightarrow F'\longrightarrow0\;,  
\eeq
where $F'$ is an admissible subalgebra of $W$. The first term of \eqref{extension1} can also be in turn described through the exact sequence,
\beq
\label{extension2}
0\longrightarrow F\cap \out(\gs)\otimes\L^+\longrightarrow F\cap \out(\gs)\otimes \L
\longrightarrow F''\;,
\eeq
where $F''$ is a $\bZ$-graded subalgebra of $\out(\gs)$. 
\vskip0.2cm\par
In the grading (II) of Theorem \ref{main1} there is the extreme case $E=(0)$ where
\begin{multline}
\label{gener}
\gg=\gg^{-1}\oplus\gg^0\oplus\gg^1\qquad\text{and}\\
\gg^{-1}=\gs\xi\;,\qquad \gg^0=\gs\niplus \wt F\;,\qquad \gg^1=\mathbb{C}\partial_{\xi}\;.\;\;\;
\end{multline}
Here $\wt F$ is {\it any} subalgebra of the direct sum $\out(\gs)\oplus\mathbb{C}\xi\partial_{\xi}$ of $\out(\gs)$ with the space generated by the grading operator $-\xi\partial_{\xi}$ of \eqref{gener}. We note that the Lie superalgebra \eqref{gener} is a generalization of the Lie superalgebra ``$G^z$'' and, at the same time, the Lie superalgebra ``$H^\xi$'' introduced in \cite[p. 71]{K1}. 

In general there is a graded ideal 
\beq
\label{ideal}
\mathfrak{i}=(\out(\gs)\otimes\L(E))\oplus(\L(E)\xi\partial_{\xi})\oplus(\L(E)\partial_{\xi})
\eeq 
of $\out_{0}(\gs^\L)=\bigoplus_{p\geq 0}\out^{p}(\gs^\L)$ and an exact sequence
\beq
\label{extension3}
0\longrightarrow F\cap \mathfrak{i}\longrightarrow F\longrightarrow F'\longrightarrow0\;,  
\eeq
where $F'$ is an admissible subalgebra of $W(E)$. These subalgebras can be described similarly as above in terms of sections $\varphi':\partial_E\longrightarrow W(E)$ of the natural projection of $W(E)=\partial_E\oplus\L^+(E)\partial_E$ onto $\partial_E$. If $\mathfrak{i}^+$ is the ideal of $\mathfrak{i}$ obtained by replacing $\L(E)$ with $\L^+(E)$ in \eqref{ideal} then the first term of \eqref{extension3} fits into the exact sequence
\beq
\label{extension4}
0\longrightarrow F\cap \mathfrak{i}^+\longrightarrow F\cap \mathfrak{i}
\longrightarrow F''\;,
\eeq
where $F''=(F'')^0\oplus\mathbb{C}\partial_{\xi}$ and $(F'')^0$ a is subalgebra of $\out(\gs)\oplus\mathbb{C}\xi\partial_{\xi}$.
\medskip\par
The extension problems associated with these exact sequences look rather complicated and will not be addressed here.

\end{document}